\documentclass[11pt]{article}

\usepackage[a4paper,headinclude=false,footinclude=false,DIV=10]{typearea}
\usepackage{setspace}

\usepackage{multicol}

\usepackage{amsmath,amsthm,amssymb}
\usepackage{stmaryrd}

\usepackage{epsfig,graphicx}
\usepackage[colorlinks=false]{hyperref}
\usepackage[english]{babel}

\usepackage[utf8]{inputenc}
\usepackage[T1]{fontenc}

\usepackage{fourier} 
\usepackage[scaled=0.875]{helvet} 

\newtheoremstyle{mystyle}{}{}{\rmfamily}%
{}{\normalfont\bfseries}{ }{ }{}

\newtheorem{theorem}{Theorem}[section]
\newtheorem{proposition}[theorem]{Proposition}

\newtheorem{corollary}[theorem]{Corollary}
\newtheorem{definition}[theorem]{Definition}

\newtheorem{introtheorem}{Theorem}

\theoremstyle{mystyle}
\newtheorem{remark}[theorem]{Remark}

\newcommand{\R}{\mathbb{R}}

\newcommand{\N}{\mathbb{N}}

\newcommand{\G}{\mathbb{G}}
\newcommand{\X}{\mathbb{X}}

\newcommand{\dd}{\mathop{}\!\mathrm{d}}

\newcommand{\cA}{\mathcal{A}}

\newcommand{\cL}{\mathcal{L}}

\DeclareMathOperator{\id}{id}

\usepackage{enumitem}
\setlist[enumerate]{itemsep=0cm,parsep=\smallskipamount}

\usepackage{xcolor}
\usepackage{tikz}

\usepackage[
style=trad-abbrv,
isbn=false,
doi=false,
url=false,
backend=biber]{biblatex}

\IfFileExists{/home/manuels/Math/Math-In-The-Cloud/Papers/cloud_library.bib}{
\addbibresource{~/Math/Math-In-The-Cloud/Papers/cloud_library.bib}
}{
\addbibresource{~/Math/Papers/cloud_library.bib}
}

\begin{document}
\selectlanguage{english}
\vspace*{1cm}

\begin{center}
{\huge \bfseries On fluctuations of the drift in hyperbolic groups}\\[.5cm]
Gil José Astudillo Hernandez$^\mathrm{a}$, Manuel Stadlbauer$^\mathrm{a}$

\bigskip
{\scriptsize
$^\mathrm{a}$ Departamento de Matemática, Universidade Federal do Rio de Janeiro,\\
Ilha do Fundão,  21941-909 Rio de Janeiro (RJ), Brazil\\[-.2cm]
}
\bigskip \bigskip
{\small \today}
\end{center}

\begin{abstract}
Let $\Gamma$ be a convex-cocompact group of isometries of a CAT($-1$) space $X$
and let $Y \to X/\Gamma$  be a Galois cover with
a word-hyperbolic group of deck
transformations. We show that, for almost every geodesic $\xi$ with respect to the Bowen-Margulis-Sullivan measure on the lift of $X/\Gamma$,   there exist $\mathfrak{m}, \sigma > 0$ and a standard Brownian motion $B_s$ such
that, for any $\lambda > 1/4$,
\[
    d(p(g_s (\xi)), \mathbf{o}) = \mathfrak{m}s + \sigma B_s + o(s^\lambda),
\]
where $g_s$ denotes the geodesic flow acting on the geodesics of $Y$ and $p(g_s)$  the canonical projection to $Y$. The result is a consequence of an almost sure invariance principle for random walks on hyperbolic groups with dependent increments. Its proof makes use of a new Ruelle operator theorem for skew products and Martin boundary techniques for random walks with dependent increments.\\

\noindent \textbf{Keywords} Hyperbolic group, almost sure invariance principle, drift\\
\noindent \textbf{MSC 2020} Primary: 37A50, 20F67, 60J50. Secondary: 37D40

\end{abstract}

\section{Introduction}
In this article, we prove a Ruelle operator theorem for skew products under a mild hypothesis on contraction in average, discuss its consequences with respect to approximations by Brownian motion and synchronization, and then show how to apply these results to random walks on hyperbolic groups with not necessarily independent increments.

The main application and motivation for this sequence of results is a precise characterization of the asymptotic behavior of the distance from the starting point along the geodesic flow in the following class of geometrically infinite CAT($-1$) manifolds. Recall that  CAT($-1$) space $X$ is a uniquely geodesic metric space where every geodesic triangle is thinner than its comparison triangle in the hyperbolic plane of constant curvature $-1$.
Let $\mathcal{G}$ refer to the space of complete geodesics in $X$, and assume that $\Gamma$ is a  discrete group of isometries such that a $\mathcal{G}_\Gamma/\Gamma$ is compact, with $\mathcal{G}_\Gamma$ referring to those geodesics with endpoints in the limit set of $\Gamma$. Furthermore, we assume that there is an intermediate, locally isometric cover $X \to Y \to X/\Gamma$ such that the fundamental group $N$ of $Y$ is normal in $\Gamma$  (see Figure \ref{fig:intermediate-cover} for an example in which  $\Gamma/N$ is the free group on two generators). In other words, $\Gamma$ is convex-cocompact and $Y$ is a Galois cover (or regular cover) of $X/\Gamma$. In this situation, it is well known that the Bowen-Margulis-Sullivan measure with respect to $\Gamma$ is supported on $\mathcal{G}_\Gamma/\Gamma$ and that the group of deck transformations of $Y \cong X/N \to X/\Gamma$ is isomorphic to $\Gamma/N$.
\begin{figure}[ht]
\centering\def\svgwidth{0.91\textwidth}
\begingroup%
  \makeatletter%
  \providecommand\color[2][]{%
    \errmessage{(Inkscape) Color is used for the text in Inkscape, but the package 'color.sty' is not loaded}%
    \renewcommand\color[2][]{}%
  }%
  \providecommand\transparent[1]{%
    \errmessage{(Inkscape) Transparency is used (non-zero) for the text in Inkscape, but the package 'transparent.sty' is not loaded}%
    \renewcommand\transparent[1]{}%
  }%
  \providecommand\rotatebox[2]{#2}%
  \newcommand*\fsize{\dimexpr\f@size pt\relax}%
  \newcommand*\lineheight[1]{\fontsize{\fsize}{#1\fsize}\selectfont}%
  \ifx\svgwidth\undefined%
    \setlength{\unitlength}{1164.85459215bp}%
    \ifx\svgscale\undefined%
      \relax%
    \else%
      \setlength{\unitlength}{\unitlength * \real{\svgscale}}%
    \fi%
  \else%
    \setlength{\unitlength}{\svgwidth}%
  \fi%
  \global\let\svgwidth\undefined%
  \global\let\svgscale\undefined%
  \makeatother%
  \begin{picture}(1,0.35915831)%
    \lineheight{1}%
    \setlength\tabcolsep{0pt}%
    \put(0,0){\includegraphics[width=\unitlength,page=1]{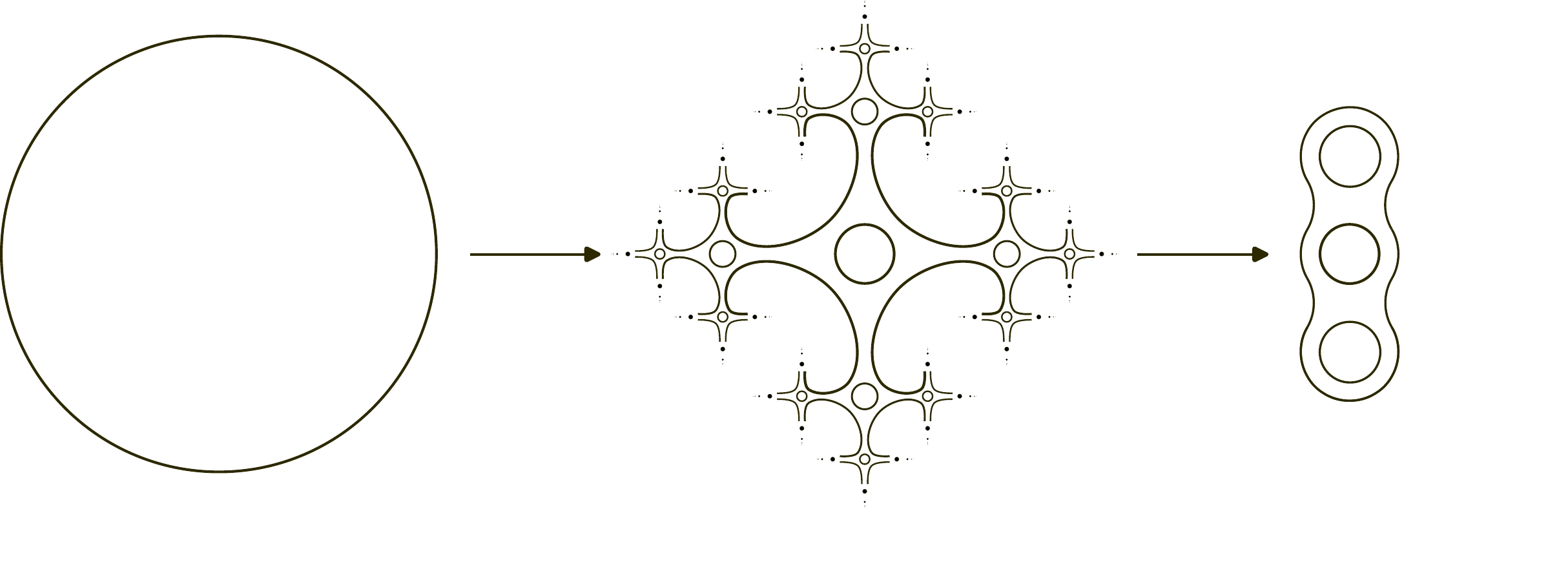}}%
    \put(0.13178566,0.00091719){\color[rgb]{0.17254902,0.16078431,0}\makebox(0,0)[lt]{\lineheight{0.72000158}\smash{\begin{tabular}[t]{l}$X$\end{tabular}}}}%
    \put(0.53097729,0.00091719){\color[rgb]{0.17254902,0.16078431,0}\makebox(0,0)[lt]{\lineheight{0.72000158}\smash{\begin{tabular}[t]{l}$X/N$\end{tabular}}}}%
    \put(0.84002826,0.00091719){\color[rgb]{0.17254902,0.16078431,0}\makebox(0,0)[lt]{\lineheight{0.72000158}\smash{\begin{tabular}[t]{l}$X/\Gamma$\end{tabular}}}}%
  \end{picture}%
\endgroup%

\caption{Intermediate cover}
\label{fig:intermediate-cover}
\end{figure}

If, in addition, $\Gamma/N$ is a word-hyperbolic group, it follows that the natural projection $ p: \mathcal{G}_\Gamma/N \to Y$ is roughly isometric to $\Gamma/N$, which implies that the Gromov boundary $\partial(\Gamma/N)$ of  $\Gamma/N$ and the one of $p (\mathcal{G}_\Gamma/N)$   are homeomorphic.

Furthermore, recall that the geodesic flow on $\mathcal{G}$ is defined by $g_s(\gamma(t)) := \gamma(t+ s)$, for $s,t \in \R$ and $(\gamma: \R \to X) \in \mathcal{G}$, and note that $(g_s)$ projects onto the geodesic flows on $\mathcal{G}/N$  and $\mathcal{G}/\Gamma$, respectively. To obtain a characterization of the asymptotic behavior, it remains to define, for $x,y \in Y$
\[
d_Y^\ast(x,y) := - \log \sum_{(\ast)} e^{- \delta_\Gamma \ell(\gamma)},
\]
where $(\ast)$ stands for the summation over all local geodesic arcs from $x$ to $y$, $\ell$ for the length of the arc with respect to the metric $d_Y$ on $Y$, and $\delta_\Gamma$ for the exponent of convergence of $\Gamma$. At this point, it is worth noting that $d_Y^\ast$ is the Poincaré series of $N$ with respect to the exponent of convergence of $\Gamma$ and that, as shown in Theorem \ref{theo:geometric-application}, there exist $c_1,c_2 > 0$ such that
\[  c_1 d_Y  - c_2 \leq  d_Y^\ast \leq \delta_\Gamma d_Y. \]
\begin{introtheorem} \label{mainthm:vasip-for-the-flow}
Assume that $\Gamma/N$ is a non-elementary word-hyperbolic group, and that $\mathbf{o} \in Y$. Then, there exist a measurable, non-continuous map $\pi:  \mathcal{G}_\Gamma/N  \to \partial(\Gamma/N)$ and $\mathfrak{m},\sigma > 0$ such that, for almost every $\xi \in \mathcal{G}_\Gamma/N$ with respect to the lift of the  Bowen-Margulis-Sullivan measure,
\begin{enumerate}
 \item $d_Y(p(g_s (\xi)),\mathfrak{g}_\xi([0,\infty)) ) = O(\log s)$ as $s \to \infty$,  for any geodesic half ray $\mathfrak{g}_\xi$ from  $\mathbf{o}$ to $\pi(\xi)$,
 \item after eventually extending the measure space, there exists a standard Brownian motion $B_s$ such that,  for any $\lambda > 1/4$,
 \[d_Y^\ast(p(g_s (\xi)),\mathbf{o}) = \mathfrak{m} s + \sigma B_s + o (s^{\lambda}).\]
 \item If $p(\mathcal{G}_\Gamma/N)$ is strongly hyperbolic, then one may replace $d_Y^\ast$ with $d_Y$ in the above approximation by a Brownian motion.
\end{enumerate}
\end{introtheorem}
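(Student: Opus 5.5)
We reduce the statement to an almost sure invariance principle (ASIP) for a random walk on $G:=\Gamma/N$ with dependent increments, driven by a Hölder potential on a subshift of finite type.

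\textbf{Symbolic coding.} Since $\Gamma$ is convex-cocompact in a CAT($-1$) space, the geodesic flow on $\mathcal{G}_\Gamma/\Gamma$ equipped with the Bowen-Margulis-Sullivan measure has a Markov coding. It is a suspension flow over a two-sided subshift of finite type $(\Sigma,\theta)$ with a Hölder roof function $r$, and the BMS measure corresponds to the equilibrium state $\mu$ of $-\delta_\Gamma r$, normalised by its integral. The Galois cover $Y$ is encoded by a group extension. Choose a fundamental domain for $N$ and a Hölder (locally constant after passing to the one-sided shift) cocycle $\kappa:\Sigma\to G$, recording which translate of the fundamental domain the lifted orbit enters. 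Then the lifted flow on $\mathcal{G}_\Gamma/N$ is the suspension of the skew product $T(x,g)=(\theta x, g\kappa(x))$, and
\[
p(g_s\xi) \approx \kappa_{n(s)}(x):=\kappa(x)\kappa(\theta x)\cdots\kappa(\theta^{n(s)-1}x),
\]
up to a bounded error in $d_Y$. Here $n(s)$ is the number of returns before time $s$. So $Z_n:=\kappa_n(x)$ is a random walk on $G$ whose increments are driven by the Gibbs measure $\mu$.

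\textbf{Random walk with dependent increments.} We apply the Ruelle operator theorem for skew products from the body of the paper, which requires contraction in average; this holds because $G$ is non-elementary and the increments generate $G$ as a semigroup. It gives the following:
\begin{enumerate}
\item $Z_n$ converges almost surely to a point $\pi(x)\in\partial G$, and this defines the map $\pi$. The map is measurable but not continuous, since it depends on the whole future.
\item $Z_n$ tracks a geodesic ray to $\pi(x)$ with deviation $O(\log n)$. The deviation probabilities decay exponentially by the spectral gap, and Borel-Cantelli turns this into the almost sure bound.
\item The Martin kernel and Green function for the dependent walk at the critical parameter $\delta_\Gamma$ are comparable with $d^\ast_Y$.
\end{enumerate}
Since $d_Y^\ast$ is the Poincaré series of $N$ at exponent $\delta_\Gamma$, $e^{-d^\ast_Y(\mathbf o,g\mathbf o)}$ plays the role of a Green function. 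By Ancona-type inequalities it is a hyperbolic quasi-metric, so a Busemann-type cocycle $\beta$ exists and satisfies
\[
d_Y^\ast(Z_n,\mathbf o)=\sum_{k<n}\beta(T^k(x,\cdot))+O(1)
\]
along the walk, where $\beta$ is Hölder on the extended space $\Sigma\times\partial G$.

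\textbf{ASIP and time change.} We apply the ASIP (Gouëzel / Cuny-Merlevède type, from the spectral gap of the twisted transfer operator) to the Birkhoff sums of $\beta$ over the skew product with its stationary measure. This gives
\[
\sum_{k<n}\beta = n\,m_0 + \sigma_0 W_n + o(n^{1/4+\epsilon}).
\]
The ASIP also holds jointly for the vector $(\beta,r)$. We then pass from the discrete time $n$ to the flow time $s$ through $n(s)$, which is the inverse of the Birkhoff sums $r_n$. Standard arguments (Melbourne-Török) transfer a vector ASIP with rate $o(s^\lambda)$ for every $\lambda>1/4$ to the suspension. This yields (2) with $\mathfrak m=m_0/\int r\,d\mu$.

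The strict positivity $\mathfrak m,\sigma>0$ comes from non-elementarity: the drift is positive by non-amenability, and $\sigma=0$ would force $\beta$ to be cohomologous to a constant multiple of $r$, which is excluded by a length-spectrum argument. Part (1) follows from the tracking in item 2 together with the rough isometry between $p(\mathcal G_\Gamma/N)$ and $G$. For part (3), strong hyperbolicity gives a genuine Busemann cocycle for $d_Y$ that is Hölder on the boundary, so the same argument applies with $d_Y$ in place of $d^\ast_Y$.

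\textbf{Main obstacle.} The hardest step is establishing the spectral gap, i.e.\ contraction in average, for the skew product with infinite fibre $\partial G$ and dependent increments. The same difficulty arises in proving that the Busemann cocycle for $d^\ast_Y$ is Hölder, which requires Ancona inequalities for a non-i.i.d.\ walk. To get these, I would first establish uniform exponential decay of the Green function via the Ruelle operator, and then deduce Ancona from it.
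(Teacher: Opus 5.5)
There is a genuine gap at exactly the step you flag as the crux. As you have ordered the argument, the natural proof of that step needs the very conclusions you derive from it.

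You assert that backward contraction in average holds ``because $G$ is non-elementary and the increments generate $G$ as a semigroup''. You then extract the convergence $\gamma_n(\xi)\to\pi(\xi)$ and the $O(\log n)$ tracking from the resulting spectral gap. Neither hypothesis gives (lebca) directly. What produces contraction on $\Sigma\times\partial G$ is a positive Lyapunov exponent of the fibre action: $\int b^\ast\,dm>0$, with $b^\ast(\xi,x)=\beta^\ast_{\id}(\gamma(\xi),x)$ and $m=\int\Delta_{\pi(\xi)}\,d\nu(\xi)$. Both this measure and the positivity (Corollary~\ref{cor:pos-lyapunov-exponent}) presuppose three facts: almost sure convergence to $\partial G$, positive drift, and tracking (to compare $d(\id,\gamma_n(\xi))$ with $\beta^\ast_{\id}(\gamma_n(\xi),\pi(\xi))$). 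These are precisely what you want to get from the spectral gap.

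The paper proves these inputs first and independently, by Martin boundary theory for the dependent walk at $r=1<R$, where $R>1$ by non-amenability.
\begin{itemize}
\item Convergence comes from Shwartz together with the identification of the minimal Martin boundary with $\partial G$.
\item Tracking comes from the double-exponential avoidance bound, Ancona's inequality, Lemma 4.4 of Bispo--Stadlbauer and Borel--Cantelli (Proposition~\ref{prop:convergence-to-the-boundary}).
\item Positive drift comes from a linear bound on expected first-passage times and Derriennic's almost subadditive ergodic theorem (Theorem~\ref{theo:positive-drift}).
\end{itemize}
Only then does Theorem~\ref{theo:hyperbolic-groups-and-lebca} obtain (lebca). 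Lin's criterion and a distortion bound for the Busemann function make $\cL_T^{n_0}(S_{n_0}b^\ast)$ uniformly positive. A second-order expansion of $t\mapsto\cL_T^{n_0}(e^{-tS_{n_0}b^\ast})$ then gives a uniform bound $<1$ for small $t$, which also fixes the exponent $\alpha$ of the visual metric. (cp) is a separate argument using three boundary points and transitivity of $S$.

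Your closing paragraph reaches exponential decay of the Green function and Ancona, but never connects them to (lebca). Likewise, ``deviation probabilities decay exponentially by the spectral gap'' is unsupported. The distance of $\gamma_n(\xi)$ to the ray is not a Birkhoff sum of a H\"older observable. Its tails need shadow-type estimates, which is what the Martin boundary bounds supply.

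Smaller points:
\begin{itemize}
\item Since $\mu$-typical fibre coordinates equal $\pi(\xi)$ (Proposition~\ref{prop:dense-support}), $d^\ast_Y(Z_n,\mathbf o)$ and the Busemann sum differ by the deviation from the ray. That is $O(\log n)$, not $O(1)$. This is harmless for the rate, but again requires tracking independently of the spectral gap.
\item H\"older continuity of the Busemann cocycle needs the Gou\"ezel--Lalley (strong Ancona) inequality. Ancona alone only controls it up to bounded error.
\item The Green function you need is $\G_1$, which is subcritical, not critical. The tracking estimate uses $1<R$ and is only available for $r<R$ (Remark~\ref{rem:convergence-to-the-boundary-for-r}).
\item The transfer to flows in Melbourne--T\"or\"ok and Melbourne--Nicol is stated with exponent $1/2-\delta$. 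The $1/4+\epsilon$ rate comes from combining Gou\"ezel's ASIP with the Cs\"org\"o--R\'ev\'esz bound on Brownian increments over windows of length $O(\sqrt{t\log\log t})$, as in Theorem~\ref{theo:vasip-flow}.
\end{itemize}
Your treatment of part (1), and of $\sigma>0$ via a periodic orbit with trivial $G$-holonomy, is in line with the paper.
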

At this point, we would like to include some remarks and observations concerning the generality of the CAT($-1$) setting and  the relevance of the approximation by Brownian motion, as well as related results for simple random walks on word-hyperbolic groups.

With respect to the generality, note that, if $X/\Gamma$ is a compact manifold with variable, strictly negative curvature, then the exponent of convergence $\delta_\Gamma$ is equal to the conformal dimension of $\partial X$, and $\mathcal{G}$, $\mathcal{G}/N$ and $\mathcal{G}/\Gamma$ are the sphere bundles over $X$, $X/N$ and $X/\Gamma$, respectively.

The second statement of Theorem \ref{mainthm:vasip-for-the-flow} is an almost sure invariance principle and is of particular interest since it allows transferring results for the Brownian motion to this geometric setting. The standard law of the iterated logarithm implies that
\begin{equation} \label{eq:second-order-approx}
\left| d_Y^\ast(p(g_s \xi),\mathbf{o})  -
\mathfrak{m} s\right| \leq  \sqrt{2 \sigma^2 s \log \log s} \,  \left(1 + {o}(1)\right)
\end{equation}
and that the estimate is sharp. It is worth noting that \eqref{eq:second-order-approx} represents a second order approximation of the fluctuation, and that, as $g_s(\xi)$ always stays within an error of order $\log s$ to the half ray $\gamma_\xi([0,\infty))$ by the
first statement of Theorem \ref{mainthm:vasip-for-the-flow}, these fluctuations correspond to moving forwards and backwards along the geodesic half ray.
However, more advanced probabilistic laws provide a refined description of this almost sure error term. For example, Strassen's functional law of the iterated logarithm implies that, for any absolutely continuous function $g: [0,1] \to \R$ with $g(0)=0$ and
$\| g' \|_2 \leq 1$ and almost every $\xi$, there exists a sequence $t_n \to \infty$ such that, for $\epsilon_n(\xi,s)$ given by
\[
d_Y^\ast(p(g_{st_n} \xi),\mathbf{o}) = \mathfrak{m} s t_n + g(s) \sqrt{2 \sigma^2 t_n \log \log t_n} \,  \left(1 + \epsilon_n(\xi,s)\right),
\]
one has that $\lim_{n \to \infty} \sup \{ |\epsilon_n(\xi,s)  | : s \in [0,1] \} =0$. This obviously excludes the possibility of a third order approximation of the drift.

On the other hand, as it is well known, the weak invariance principle and the central limit theorem are consequences of the almost sure invariance principle. However, to do so, one has to choose a fundamental domain  $P \subset Y$ for the action of the deck transformations on $Y$ and normalize the Bowen-Margulis-Sullivan measure on $X/\Gamma$, which appears to be artificial in this context. Moreover, due to the nature of weak convergence, one only has access to the asymptotic distribution of $(d_Y^\ast(p(g_s \xi),\mathbf{o}) -  \mathfrak{m}s)/\sqrt{s}$, which does provides us with a second order approximation as in \eqref{eq:second-order-approx}.

There are several parallel results for random walks with independent increments on word-hyperbolic groups. For example, the first assertion, with an error of order \(o(n)\), is due to Kaimanovich (\cite{Kaimanovich--The-Poisson-Formula-For--AM22000}) and was subsequently sharpened to \(O(\log n)\) by Maher and Tiozzo for weakly hyperbolic groups (\cite{Maher-Tiozzo--Random-Walks-On-Weakly--JRAM-2018}). Moreover, a central limit theorem for the displacement, as in the second assertion, was obtained by Ledrappier for free groups (\cite{Ledrappier--Some-Asymptotic-Properties-Of-Random-Walks-On-Free--2001}), by Björklund (\cite{Bjorklund--Central-Limit-Theorems-For--JTP-2010}) for word-hyperbolic groups and, in greater generality, by Benoist and Quint (\cite{Benoist-Quint--Random-Walks-On-Reductive--2016,Benoist-Quint--Central-Limit-Theorem-On-Hyperbolic-Groups--IRANSM2016}), Gouëzel (\cite{Gouezel--Analyticity-Of-The-Entropy--DA2017}) and Mathieu \& Sisto (\cite{Mathieu-Sisto--Deviation-Inequalities-For-Random-Walks--DMJ2020}).
A central limit theorem for the displacement and the drift were obtained
by Gekhtman, Taylor \& Tiozzo in \cite{Gekhtman-Taylor-Tiozzo--Central-Limit-Theorems-For-Counting-Measures-In-Coarse--CM2022} for coarse hyperbolic spaces. Furthermore, Choi obtained a refined tracking result by geodesics as well as a central limit theorem and law of the iterated logarithm for the displacement in \cite{Choi--Central-Limit-Theorem-And-Geodesic-Tracking-On-Hyperbolic--AM2023}.

The large deviation principles for the drift by Boulanger, Mathieu, Sert \& Sisto in   \cite{Boulanger-Mathieu-Sert-Sisto--Large-Deviations-For-Random-Walks-On-Gromov-Hyperbolic--ASENS42023} and by Aoun, Mathieu \& Sert in \cite{Aoun-Mathieu-Sert--Random-Walks-On-Hyperbolic-Spaces-Second-Order-Expansion--JEPM2023} (see also \cite{Gouezel--Exponential-Bounds-For-Random-Walks-On-Hyperbolic-Spaces--TJM2022}) as well as the concentration inequalities by Aoun \& Sert in \cite{Aoun-Sert--Random-Walks-On-Hyperbolic-Spaces-Concentration-Inequalities-And--PTRF2022} and the central limit theorem for equivariant cocycles with values in $\R^d$ by Bénard in \cite{Benard--Winding-Of-Geodesic-Rays-Chosen-By-A-Harmonic--MA2024} provide further evidence of the probabilistic character of the drift and the displacement. Furthermore, several authors have obtained central limit theorems for the average displacement of group elements in spheres with respect to a second metric on \(G\) (see \cite{Calegari-Fujiwara--Combable-Functions-Quasimorphisms-And--ETDS2010,Cantrell--Typical-Behaviour-Along-Geodesic-Rays-In-Hyperbolic-Groups--MZ2021,Cantrell--Statistical-Limit-Laws-For-Hyperbolic-Groups--TAMS2021,Cantrell--Mixing-Of-The-Mineyev--MA-2025,Cantrell-Pollicott--Central-Limit-Theorems-For-Green-Metrics-On-Hyperbolic--AHL2026}, and references therein).

However, to the best of the authors' knowledge, no almost sure invariance principle is known for the displacement of the random walk, nor are there analogous limit laws for the geodesic flow as in Theorem \ref{mainthm:vasip-for-the-flow}.

\bigskip

We now pass to the structure of this paper, which is divided into two principal parts. In the first part, we prove a vector-valued almost sure invariance principle  for suspension semi-flows over a class of non-expanding skew products, whereas the second establishes an almost sure invariance principle for the drift of random walks on hyperbolic groups with dependent increments. Theorem \ref{mainthm:vasip-for-the-flow} then follows as a special case of this result.

In Section \ref{sec:ruelle-for-skew}, we introduce the class of skew products whose fiber maps are homeomorphisms of a compact metric space. In particular, this implies that expansion and contraction of distances in the fibers always coexist and that one should expect phenomena known from random walks on $\mathbb{S}^1$  (cf. \cite{Malicet--Random-Walks-On-Rm--CMP-2017}) or random walks on general metric spaces with a local contraction property (cf. \cite{Gelfert-Salcedo--Synchronization-Rates-And-Limit--MZ-2024}).
However, to access probabilistic limit theorems through the spectral method (cf. \cite{Gouezel--Almost-Sure-Invariance-Principle--AP2010}), it is necessary to establish a Ruelle operator theorem, which is the content of this section.

Now assume that $(\Sigma,d_\Sigma)$ and  $(X,d_X)$ are compact, metric spaces, that $\theta : \Sigma \to \Sigma$ is Ruelle expanding (see Def. \ref{def:Ruelle-expanding}), and that $\varphi: \Sigma \to \R$ is a Hölder continuous function such that $\sum_{\theta\xi^\ast = \xi} \exp(\varphi(\xi^\ast)) =1$ for all $\xi \in \Sigma$.
A map $\kappa: \Sigma \to \mathrm{Homeo}(X)$ from $\Sigma$ to the space of homeomorphisms of $X$ then gives rise to  a skew product $T: \Sigma \times X \to  \Sigma \times X$ and its associated transfer operator $\cL$, acting on positive functions by
\[
T(\xi,x) := (\theta(\xi),\kappa(\xi)(x)), \quad  \cL(f)(\xi,x) :=   \sum_{T (\xi^\ast,x^\ast) = (\xi,x)} e^{\varphi(\xi^\ast)} f(\xi^\ast,x^\ast).
\]
It is now crucial to observe that, as $\theta$ is Ruelle expanding, there is some $a > 0$ such that for any $n \in \N$ and $\xi \in \Sigma$, there is a bijection from the set $\theta^{-n}(\{\xi\})$ to the set of inverse branches of $T^n$, defined on the ball of radius $a$ with center $\xi$. We then say that $(T,\varphi)$ is locally eventually backward contracting in average if, for some $a> 0$ sufficiently small and $n$ sufficiently large, and some metric $d$ on $\Sigma \times X$,
\begin{align} \label{eq:bca-intro}
\sup_{0< d(\zeta_1,\zeta_2) < a} \left(  \frac{1}{d(\zeta_1,\zeta_2)} \sum_{(\ast)}  e^{\varphi(\xi^\ast)} d(T^{-n}_{\xi^\ast}(\zeta_1),T^{-n}_{\xi^\ast}(\zeta_2)) \right) < 1,     \end{align}
where $(\ast)$ refers to the summation over $\theta^{-n}(\{\xi\})$, for $(\xi,x) = \zeta_1$, for some $x \in X$ and $T^{-n}_{\xi^\ast}$ to inverse branch associated to $\xi^\ast \in \theta^{-n}(\{\xi\})$. Moreover, if the additional, mild assumption of communicating preimages holds, we say that $(T,\varphi)$ is backward contracting in average (see Def. \ref{def:LEEA}). The main result in Section \ref{sec:ruelle-for-skew} can be summarized as follows (cf. Theorems \ref{theo:Wasserstein} and \ref{theo:Ruelle}).
\begin{introtheorem}\label{mainthm:bca}
Assume that $(\Sigma,\theta)$ is Ruelle expanding and that $(T,\varphi)$ is backward contracting in average. Then there are $\ell > 0$ and $\tau \in (0,1)$ such that the action of the dual of $\cL^\ell$ on the space of probability measures is a contraction in the Wasserstein distance by a factor of $\tau$. Furthermore, there exists a unique probability measure $\mu$ such that, for the space of Hölder functions on $\Sigma \times X$,
\[ \| \cL^{\ell k} (f)  - \int f \dd \mu \| \ll \tau^k D(f),    \]
 where $D(f)$ refers to the Hölder coefficient of $f$, and $\|f\|$ to the Hölder norm $\|f\|_\infty + D(f)$. If the inverse branches of $T$ are Lipschitz continuous, one may suppose that $\ell =1$ in the above contraction of Hölder functions.
\end{introtheorem}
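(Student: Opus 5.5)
The approach is the standard coupling/Kantorovich–Rubinstein duality argument, adapted to the local nature of the contraction hypothesis \eqref{eq:bca-intro}. Let $\cL^\ast$ be the dual of $\cL$ acting on probability measures, and let $W$ be the Wasserstein distance with respect to a truncated metric $d_a := \min\{d, a\}$ (or a Hölder power $d^\alpha$ of it). By duality, $W(\mu,\nu) = \sup\{ \int f \dd(\mu-\nu) : \mathrm{Lip}_{d_a}(f) \le 1\}$. So it suffices to show that $D_{d_a}(\cL^{\ell} f) \le \tau D_{d_a}(f)$, i.e.\ that $\cL^\ell$ contracts the Lipschitz seminorm.

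\textbf{Step 1 (close points).} Fix $\zeta_1,\zeta_2$ with $d(\zeta_1,\zeta_2)<a$. Ruelle expansion gives the bijection between $\theta^{-n}(\xi)$ and the inverse branches $T^{-n}_{\xi^\ast}$ on the $a$-ball. Write
\[
\cL^n f(\zeta_1)-\cL^n f(\zeta_2) = \sum_{(\ast)} \bigl(e^{\varphi_n(\xi^\ast)}-e^{\varphi_n(\xi^{\ast\ast})}\bigr) f(T^{-n}_{\xi^\ast}\zeta_1) + \sum_{(\ast)} e^{\varphi_n(\xi^\ast)}\bigl(f(T^{-n}_{\xi^\ast}\zeta_1)-f(T^{-n}_{\xi^\ast}\zeta_2)\bigr),
\]
where $\xi^{\ast\ast}$ is the paired preimage of the base point of $\zeta_2$. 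Hölder continuity of $\varphi$ and the contraction of $\theta^{-n}$ bound the first sum by $C d(\zeta_1,\zeta_2)^\alpha \|f\|_\infty$. Assumption \eqref{eq:bca-intro} bounds the second sum by $\tau_0 D(f) d(\zeta_1,\zeta_2)$. For $f$ replaced by $f - c$ with $c$ a suitable constant, $\|f-c\|_\infty \le \mathrm{diam}\cdot D(f)$, and $\cL$ preserves constants because of the normalization $\sum e^{\varphi}=1$. Working in the Hölder exponent $\alpha$, this yields the Doeblin–Fortet/Lasota–Yorke type inequality
\[
D(\cL^n f) \le \tau_0 D(f) + C\|f\|_\infty .
\]

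\textbf{Step 2 (far points, the main obstacle).} The local estimate only controls pairs at distance $<a$, while Wasserstein contraction also needs control of points that are far apart, in particular points in different fibers. Here I would use the communicating preimages assumption from Def.~\ref{def:LEEA}. Given any $\zeta_1,\zeta_2$, after $m$ further steps, a definite proportion $\epsilon>0$ of the $e^{\varphi}$-weighted preimage mass of $\zeta_1$ and of $\zeta_2$ can be matched into pairs at distance $<a$. This gives a coupling of $\cL^{\ast m}\delta_{\zeta_1}$ and $\cL^{\ast m}\delta_{\zeta_2}$ that puts mass $\epsilon$ on close pairs. Step 1 then contracts those close pairs. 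Combining the two and choosing $\ell = n+m$ gives $W(\cL^{\ast\ell}\mu,\cL^{\ast\ell}\nu) \le \tau W(\mu,\nu)$ with $\tau = \max\{\tau_0,\,1-\epsilon(1-\tau_0)\}<1$, after adjusting the truncation level. This proves the first assertion.

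\textbf{Step 3 (conclusions).} Contraction on the complete metric space of probability measures gives a unique fixed point $\mu$ by Banach's fixed point theorem. For Hölder $f$,
\[
|\cL^{\ell k}f(\zeta) - \textstyle\int f\dd\mu| = |\int f \dd(\cL^{\ast\ell k}\delta_\zeta - \cL^{\ast \ell k}\mu)| \le D(f)\,\tau^k\,\mathrm{diam}.
\]
This bounds the sup-norm part. Applying Step 1 to $\cL^{\ell k}f-\int f\dd\mu$ gives the corresponding bound for the Hölder seminorm, up to a constant. If the inverse branches of $T$ are Lipschitz, then $\cL$ is bounded on Hölder functions. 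Hence $\|\cL^j\| \le C$ for $j<\ell$, and interpolating between multiples of $\ell$ yields the estimate for all iterates, i.e.\ one may take $\ell=1$.
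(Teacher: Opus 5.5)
\paragraph{Comparison with the paper.} Your route is the paper's coupling argument read through Kantorovich--Rubinstein duality. The paper works on the measure side: it builds explicit subcouplings of $(\cL^\ast)^{kn_0}\Delta_{\zeta_1}$ and $(\cL^\ast)^{kn_0}\Delta_{\zeta_2}$, measures them in the truncated metric $d^\dagger=\min\{1,2C^\ast d_{\Sigma\times X}\}$, contracts close pairs with (lebca), treats far pairs with (cp), and composes the couplings. You instead show that $\cL^\ell$ contracts the seminorm $D_{d_{a'}}$ for $d_{a'}=\min\{d_{\Sigma\times X},a'\}$.

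The dual view buys a real simplification. Since $\cL$ is a Markov operator, $\mathrm{osc}(\cL^j f)\le\mathrm{osc}(f)\le a'D_{d_{a'}}(f)$, so pairs at truncated distance $a'$ are never expanded. Hence (lebca) is needed only for one block of $n_0$ steps (with $a'$ small, see below), after which $D_{d_{a'}}(\cL^{jn_0}f)\le D_{d_{a'}}(f)$ follows by induction. The bookkeeping with $\mathcal{C}_k,\mathcal{D}_k,\mathcal{E}_k$ in Step 1 of the paper then becomes unnecessary. The primal view, in turn, yields explicit couplings, which the paper reuses for synchronization.

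You should state this non-expansion explicitly, since your formula $\tau=\max\{\tau_0,1-\epsilon(1-\tau_0)\}$ relies on it. Apply Step 1 only with $n=n_0$, the only time at which (lebca) is assumed. Take $m$ to be a multiple of $n_0$; this is possible because (cp) at time $m_0$ implies (cp) at every later time (go back arbitrarily first, then apply (cp)).

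\paragraph{The step that does not close.} The problem is the interplay between the truncation scale and (cp). For $d_{\Sigma\times X}(\zeta_1,\zeta_2)<a'$, Step 1 gives only $|\cL^{n_0}f(\zeta_1)-\cL^{n_0}f(\zeta_2)|\le(\rho^\ast+C_\varphi a')\,D_{d_{a'}}(f)\,d_{\Sigma\times X}(\zeta_1,\zeta_2)$, because the distortion term costs $C_\varphi d_\Sigma^\alpha\,\mathrm{osc}(f)$. Your inequality $D(\cL^n f)\le\tau_0D(f)+C\|f\|_\infty$ is not a contraction by itself. So the truncation scale must satisfy $C_\varphi a'<1-\rho^\ast$, which in general lies far below the constant $a$ of (cp). This is precisely the role of the scale $1/(2C^\ast)$ in the paper.

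However, (cp) only provides a preimage pair at distance $<a$, which may well be a far pair for $d_{a'}$. Step 1 then gives nothing for it, and your Step 2 yields no gain; ``after adjusting the truncation level'' does not cover this.

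The missing ingredient is Step 3 of the paper. Start from the (cp) pair and apply (lebca) repeatedly. Each time, select one preimage pair whose value of $d_\Sigma^\alpha+d_X^\alpha$ does not exceed the $e^{\varphi_{n_0}}$-weighted average; this is possible because the weights sum to one, and the selected value is at most $\rho^\ast$ times the previous one. After $m=m_0+jn_0$ steps this gives a single pair at distance $<a'$ with weight at least $\epsilon:=\inf e^{\varphi_m}>0$. With this in place, your combination, your Step 3, and the reduction to $\ell=1$ under Lipschitz inverse branches (Step 5 of the paper) go through.

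A minor point: in your displayed decomposition, the first sum must evaluate $f$ at $T^{-n}_{\xi^\ast}\zeta_2$ for the two sums to add up.
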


We emphasize that neither the statement nor the proof of Theorem \ref{mainthm:bca} makes use of partitions. This allows the theorem to be applied in the setting of expanding local diffeomorphisms on manifolds with a differentiable cocycle $\kappa$. Moreover, Theorem \ref{mainthm:bca} yields a contraction result for Hölder functions on $\Sigma \times X$, in contrast to the setting of quenched, annealed, or sectional transfer operators, which typically focus on functions defined on $X$.

Another important feature is that Theorem \ref{mainthm:bca} requires local contraction on average. By contrast, standard notions of contraction on average usually require either that, for some $\delta>0$ (with $\mathrm{Lip}$ denoting the Lipschitz constant),
\[  \sum_{\theta \xi^\ast = \xi}  e^{\varphi(\xi^\ast)} \log \mathrm{Lip}(T^{-1}_{\xi^\ast}) < -\delta,   \]
as in \cite{Diaconis-Freedman--Iterated-Random-Functions--SIAMR1999,Jordan-Pollicott--The-Hausdorff-Dimension-Of--DCDS2008,Alves-Bahsoun--Decay-Of-Correlations-For-Partially-Hyperbolic-Skew-Products--2026}, or  that \eqref{eq:bca-intro} holds globally, as in \cite{Werner--Contractive-Markov-Systems--JLMS22005,Kloeckner--Extensions-With-Shrinking-Fibers--ETDS2020}. For a detailed overview of these conditions, we refer to \cite{Gelfert-Salcedo--Contracting-On-Average-Iterated-Function-Systems-By-Metric--N2023} and the references therein. However, at this point, we would like to note that in our setting $\mathrm{Lip}(T^{-1}_{\xi^\ast}) > 1$ which implies that the first class of conditions is never satisfied.

In Section \ref{sec:consequences-of-Ruelle}, we then discuss several  consequences of Theorem \ref{mainthm:bca}. Using Gouëzel's results in \cite{Gouezel--Almost-Sure-Invariance-Principle--AP2010}, we establish an almost sure invariance principle for Lipschitz continuous, $\mathbb{R}^d$-valued functions (see Theorem \ref{theo:vasip}). The invariance principle then allows us to extend this result to suspension semi-flows (see Theorem \ref{theo:vasip-flow}). In addition, our approach yields a slightly sharper error term than those obtained in \cite{Denker-Philipp--Approximation-By-Brownian-Motion--ETDS1984,Rudolph--Asymptotically-Brownian-Skew-Products-Give-Non-Loosely-Bernoulli--IM1988,Melbourne-Nicol--A-Vector-valued-Almost-Sure--AP2009}. Thereafter, we  establish a synchronization result (see Theorem \ref{theo:synchronization}), based on the Wasserstein-metric approach employed in the proof of Theorem \ref{mainthm:bca}.

In Section \ref{sec:random-walks}, we then pass to the analysis of skew products of the form
\begin{equation} \nonumber
 \label{eq:random-walk-with-stationary-increments-intro}
S: \Sigma \times G \to  \Sigma \times G, (\xi,g) \mapsto (\theta\xi, g \gamma(\xi) ),
\end{equation}
where  $(\Sigma,\theta)$ is a topologically transitive subshift of finite type, $G$ is a word-hyperbolic group and $\gamma: \Sigma \to G$ is locally constant. By choosing a Hölder continuous potential $\varphi: \Sigma \to \R$ and its associated equilibrium state $\nu$, the evolution in the second coordinate models a random walk on $G$ with dependent increments. Using the Martin boundary techniques developed in \cite{Bispo-Stadlbauer--The-Martin-Boundary-Of--IJM2023} (cf. \cite{Gouezel--Local-Limit-Theorem-For--JAMS2014} for independent increments) for $(S,\nu)$, allows one to extend results
by Furstenberg, Kaimanovich and Maher \& Tiozzo to the setting of dependent increments in  Theorem \ref{theo:positive-drift}, and Proposition \ref{prop:convergence-to-the-boundary}. That is, we show that $\lim_n  d_w( \id, \gamma(\xi)\cdots  \gamma(\theta^{n-1}\xi))/n > 0$ (cf.
\cite{Furstenberg--Noncommuting-Random-Products--TAMS1963,Kaimanovich--The-Poisson-Formula-For--AM22000}), that $\pi(\xi):= \lim  \gamma(\xi)\cdots  \gamma(\theta^{n-1}\xi) \in \partial G$ exists almost surely (cf. \cite{Kaimanovich--The-Poisson-Formula-For--AM22000}) and that the ray to $\pi(\xi)$ is tracked with an error of order $\log n$ (cf. \cite{Maher-Tiozzo--Random-Walks-On-Weakly--JRAM-2018}).
It then follows from positive drift that the skew product
\begin{equation} \nonumber
 \label{eq:random-walk-on-gromov-boundary-intro}
T: \Sigma \times \partial G \to  \Sigma \times \partial G, (\xi,x) \mapsto (\theta\xi,  \gamma(\xi)^{-1}x )
\end{equation}
is backward contracting in average and, in particular, the vector-valued almost sure invariance principle holds for Lipschitz continuous observables on $\Sigma \times \partial G$. As an application, after passing from the word metric $d_w$ to the Green metric $d_{\G}$, one then obtains a precise description of the fluctuation of the drift (Corollary \ref{cor:fluctuation-of-drift}) and a significant refinement of the central limit theorem by Björklund in \cite{Bjorklund--Central-Limit-Theorems-For--JTP-2010}.
\begin{introtheorem} \label{mainthm:drift}
 Assume that $G$ is a word-hyperbolic, non-elementary group, $T$ is topologically transitive and that $\nu$ is an equilibrium state with respect to some Hölder potential on $\Sigma$. Then
 there exist $\mathfrak{m},\sigma>0$ and a standard Brownian motion $B$ such that, for any $\lambda > 1/4$ and $\nu$ almost every $\xi \in \Sigma$,
  \[
  d_\G(\id, \gamma(\xi)\cdots  \gamma(\theta^{n-1}\xi)) = \mathfrak{m} n + \sigma B_n + o(n^\lambda).
  \]
\end{introtheorem}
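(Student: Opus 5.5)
We will write the Green distance along the orbit as a Birkhoff sum of a Lipschitz observable for the boundary skew product $T$, plus a bounded error, and then apply the almost sure invariance principle (Theorem \ref{theo:vasip}).

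\emph{Step 1: the cocycle.} Let $K_{\G}(g,x)$ be the Martin kernel for the random walk with dependent increments, as in \cite{Bispo-Stadlbauer--The-Martin-Boundary-Of--IJM2023}. By the Martin boundary identification used in Section \ref{sec:random-walks}, the Martin boundary is $\partial G$ and the Busemann-type function $\beta(g,x) := \log K_{\G}(g^{-1},x)$ extends the Green metric to the boundary. It satisfies the cocycle identity $\beta(gh,x)=\beta(g,hx)+\beta(h,x)$ up to the correct conventions. Define on $\Sigma\times\partial G$ the observable
\[ \psi(\xi,x) := \beta(\gamma(\xi)^{-1}, x), \]
with any necessary dependence on the $\Sigma$-coordinate through the Hölder potential absorbed into a coboundary. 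Since $\gamma$ is locally constant and $K_{\G}(g,\cdot)$ is Hölder on $\partial G$ (Ancona-type estimates for dependent increments), $\psi$ is Lipschitz for a suitable visual metric. Iterating the cocycle along $T^k(\xi,x)=(\theta^k\xi,\gamma(\theta^{k-1}\xi)^{-1}\cdots\gamma(\xi)^{-1}x)$ gives
\[ S_n\psi(\xi,x) = \beta(g_n^{-1},x), \qquad g_n:=\gamma(\xi)\cdots\gamma(\theta^{n-1}\xi). \]

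\emph{Step 2: comparing $\beta$ with $d_\G$.} By hyperbolicity and the strong Ancona inequalities, $|\beta(g_n^{-1},x) - d_\G(\id,g_n)| \le 2(g_n\mid x)$ up to a bounded additive constant, where $(\cdot\mid\cdot)$ is the Gromov product based at $\id$. By Proposition \ref{prop:convergence-to-the-boundary}, $g_n\to\pi(\xi)$ almost surely. Hence if $x\neq\pi(\xi)$, the product $(g_n\mid x)$ stays bounded. Since the $\mu$-conditional measures on the fibres are non-atomic (the group is non-elementary), $\mu$-almost every $(\xi,x)$ satisfies $x\ne\pi(\xi)$. Therefore $d_\G(\id,g_n)=S_n\psi(\xi,x)+O(1)$ almost surely, where $\mu$ is the $T$-invariant measure from Theorem \ref{mainthm:bca} projecting to $\nu$. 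The constant $O(1)$ may depend on the point, which is harmless.

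\emph{Step 3: ASIP.} The map $T$ is backward contracting in average; this follows from positive drift (Theorem \ref{theo:positive-drift}) as stated above. Theorem \ref{theo:vasip} therefore applies to $\psi$ and gives
\[ S_n\psi = n\mathfrak{m} + \sigma B_n + o(n^\lambda) \quad\text{for every } \lambda>1/4, \]
with $\mathfrak{m}=\int\psi\,d\mu$. The drift satisfies $\mathfrak{m}>0$ because the Green drift is comparable to the positive word drift. The statement concerns only $\xi$, so a Fubini argument over $x$ transfers the almost sure statement to $\nu$-almost every $\xi$, with the Brownian motion constructed on the extended space.

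\emph{Main obstacle.} The hardest step is proving $\sigma>0$. If $\sigma=0$, the standard coboundary criterion from the spectral method gives $\psi=u\circ T-u+\mathfrak{m}$ with $u$ continuous. Evaluating this along periodic orbits of $T$ would force $d_\G$-translation lengths of all elements $\gamma(\xi)\cdots\gamma(\theta^{p-1}\xi)$ realized by periodic $\xi$ to equal $p\,\mathfrak{m}$. Topological transitivity of $T$ together with non-elementarity then yields two loxodromic words with the same period but different stable translation lengths; for instance, one can compare $a^k b$ and $a^k b'$ for suitable concatenations. This contradiction gives $\sigma>0$. Making this construction rigorous is where I expect most of the work.
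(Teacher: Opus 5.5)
There is a genuine gap in Step 2. The claim that $\mu$-almost every $(\xi,x)$ satisfies $x\neq\pi(\xi)$ is false. Since $\gamma(\xi)^{-1}\pi(\xi)=\pi(\theta\xi)$, the measure $\int\Delta_{\pi(\xi)}\,\dd\nu(\xi)$ is fixed by $\cL_T^\ast$. By uniqueness it \emph{is} $\mu$; this is the first part of Proposition \ref{prop:dense-support}. So the fibre conditionals are Dirac masses at $\pi(\xi)$, and non-elementarity does not change this.

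Hence the set where $(g_n\mid x)$ stays bounded, and where your comparison $d_\G(\id,g_n)=S_n\psi+O(1)$ holds, is $\mu$-null. Theorem \ref{theo:vasip} is a $\mu$-almost sure statement, so it says nothing on that set. You also cannot transport it there from $(\xi,\pi(\xi))$ by continuity of $\psi$, because $S_n\psi(\xi,x)-S_n\psi(\xi,\pi(\xi))\approx 2(g_n\mid\pi(\xi))-2(g_n\mid x)$ grows linearly in $n$. Your Fubini step simply puts you back on the graph $x=\pi(\xi)$.

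On the graph, the Busemann identity behind your comparison gives $S_n\psi(\xi,\pi(\xi))=d_\G(\id,g_n)-2(g_n\mid\pi(\xi))+O(1)$. Here $(g_n\mid\pi(\xi))$ is no longer bounded; it grows like $d_\G(\id,g_n)$. The missing ingredient is the logarithmic tracking of Proposition \ref{prop:convergence-to-the-boundary}(3). The bound $d(g_n,s(m_n))<C\log m_n$ gives $d_\G(\id,g_n)-(g_n\mid\pi(\xi))=O(\log n)$. Hence $d_\G(\id,g_n)=-S_n\psi(\xi,\pi(\xi))+O(\log n)$, and $O(\log n)=o(n^\lambda)$. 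In your normalization the drift is $\mathfrak m=-\int\psi\,\dd\mu$. This is how the paper concludes.

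The same misidentification of $\mu$ affects your argument for $\sigma>0$. The Lipschitz coboundary of Proposition \ref{prop:dichotomy} is only known on $\mathrm{supp}\,\mu$. A priori that support is only contained in the closure of the graph of the non-continuous map $\pi$. So before evaluating at arbitrary periodic points of $T$ you need $\mathrm{supp}\,\mu=\Sigma\times\partial G$. That is the second part of Proposition \ref{prop:dense-support}, and it requires its own argument.

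Granting it, you do not need two loxodromic words of equal period. By transitivity of $S$ there is a periodic $\xi_1$ with $\gamma_{n_1}(\xi_1)=\id$, and it gives $S_{n_1}\psi(\xi_1,x)=0$ for every $x$. The paper compares this with a periodic orbit carrying a loxodromic element, where the Birkhoff sums grow linearly. This contradicts $S_p\psi=p\int\psi\,\dd\mu$ at periodic points. In fact, since $\int\psi\,\dd\mu=-\mathfrak m\neq0$, the first orbit alone already suffices.
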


Section \ref{sec:regular cover} is  devoted to the proof of Theorem \ref{mainthm:vasip-for-the-flow}. By employing the coding of Constantine, Lafont \& Thompson in \cite{Constantine-Lafont-Thompson--Strong-Symbolic-Dynamics-For--JLP-M2020}  to  the geodesic flow on $\mathcal{G}(X/\Gamma)$, one obtains a non-invertible Markov map $(\Sigma,\theta)$, which provides us with a coding of the unstable part, in analogy to the Bowen-Series map for cocompact Fuchsian groups (cf. \cite{Series--Geometrical-Markov-Coding-Of--ETDS1986,Adler-Flatto--Geodesic-Flows-Interval-Maps--BAMSNS1991}). The unstable part of the flow on $\mathcal{G}(X/N)$ is then coded by the skew product $S$ (cf.  \cite{Bispo-Stadlbauer--The-Martin-Boundary-Of--IJM2023}). Theorem \ref{mainthm:vasip-for-the-flow} then follows from Theorems \ref{theo:vasip-flow} and  \ref{mainthm:drift}. As a concluding remark, we would like to point out that the proof of Theorem \ref{mainthm:vasip-for-the-flow} depends on having a vector-valued almost sure invariance principle at hand, and not just weak convergence to a Brownian motion.

\section{A Ruelle theorem for skew products with compact fibers}\label{sec:ruelle-for-skew}

We recall the basic results of the thermodynamic formalism for Ruelle expanding maps with respect to H\"older continuous potential functions. Throughout, we assume that $(\Sigma,d)$ is a compact, metric space and that $\theta: \Sigma \to \Sigma$ is Ruelle expanding as defined below (cf. \cite{Ruelle--The-Thermodynamic-Formalism-For--CMP1989}).

\begin{definition}[Ruelle expanding maps] \label{def:Ruelle-expanding}
A continuous map $\theta: \Sigma \to \Sigma$ is said to be \emph{$(a,\rho)$-Ruelle-expanding}, for some $a>0$ and  $\rho \in (0, 1)$, if for any $\xi, \eta, \tilde{\xi} \in \Sigma$  with $d(\xi, \eta)<a$ and $\theta(\tilde{\xi})=\xi$, there exists a unique $\tilde{\eta}\in \Sigma$  with $\theta(\tilde{\eta})={\eta}$ and
$ d(\tilde{\xi}, \tilde{\eta}) \leq  \rho d(\xi,\eta)$.
\end{definition}

Moreover, recall that $\theta$ is referred to as \emph{topologically transitive} if for any pair of open subsets  $U,V \subset \Sigma$, there exists $n \in \N$ such that $\theta^{-n}(U) \cap V \neq \emptyset$, and that   $\theta$ is referred to as \emph{topologically mixing} if
for  any pair of open subsets $U,V \subset \Sigma$, there exists $n \in \N$ such that $\theta^{-k}(U) \cap V \neq \emptyset$
for all $k > n$.

We now fix a H\"older continuous function $\varphi: \Sigma \to \R$ and refer to it as \emph{potential}. The Ruelle operator associated to $\theta$ and $\varphi$ is defined by its action on positive functions through
\[ \mathcal{L}(f)(\xi) := \sum_{\theta \tilde \xi=\xi} e^{\varphi(\tilde \xi)} f(\tilde \xi),\quad \hbox{ for }  f : \Sigma \to [0,\infty), \xi \in \Sigma. \]
In this setting, it is well-known that the action of $\mathcal{L}$ on Hölder continuous functions and its dual $\mathcal{L}^\ast$, defined by $ \int  f \dd \mathcal{L}^\ast(\mu)  = \int  \mathcal{L}(f) \dd \mu$ satisfies a Perron-Frobenius theorem.

\begin{theorem} \label{theo:rpf-for-R-expanding} If $\theta$ is Ruelle expanding, then $\mathcal{L}(f)$ acts boundedly on the space of H\"older continuous functions. Furthermore, if $\theta$ is topologically mixing, then there is a unique $\lambda > 0$, a unique, strictly positive and H\"older continuous function $h$ and a unique Radon measure $\mu$ such that $  \mathcal{L}(h) =\lambda h $ and
$  \mathcal{L}^\ast(\mu) =\lambda \mu$.
 Furthermore,  $\mu(U)>0$ for any open set $U \subset \Sigma$.
\end{theorem}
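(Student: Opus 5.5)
The plan is to prove the classical Ruelle–Perron–Frobenius theorem for Ruelle expanding maps by combining a cone (Birkhoff–Hilbert metric) argument with a Schauder–Tychonoff fixed point argument for the dual operator.

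\textbf{Boundedness on Hölder functions.} Let $\varphi$ be $\alpha$-Hölder and $f$ be $\alpha$-Hölder. For $d(\xi,\eta)<a$, Definition \ref{def:Ruelle-expanding} pairs each preimage $\tilde\xi$ of $\xi$ with a unique preimage $\tilde\eta$ of $\eta$ satisfying $d(\tilde\xi,\tilde\eta)\le \rho\, d(\xi,\eta)$. Writing $\mathcal{L}f(\xi)-\mathcal{L}f(\eta)$ as a sum over these pairs, each term is controlled by
\[
|e^{\varphi(\tilde\xi)}-e^{\varphi(\tilde\eta)}|\,|f(\tilde\xi)|+e^{\varphi(\tilde\eta)}|f(\tilde\xi)-f(\tilde\eta)|.
\]
Compactness gives a uniform bound on the number of preimages, since the preimage sets are $a$-separated by uniqueness. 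For pairs with $d(\xi,\eta)\ge a$, the trivial bound $2\|\mathcal{L}f\|_\infty/a^\alpha$ suffices. Iterating the same pairing along inverse branches of $\theta^n$ yields a Lasota–Yorke type inequality:
\[
D(\mathcal{L}^n f)\le C\big(\rho^{n\alpha}D(f)+\|f\|_\infty\big)\,\|\mathcal{L}^n1\|_\infty .
\]
Here the Hölder distortion $\sum_k |S\varphi|$ along inverse branches is bounded by $\sum_k D(\varphi)\rho^{k\alpha}d^\alpha$, which is uniform in $n$.

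\textbf{Construction of $\mu$ and $h$.} First obtain $\mu$ by applying Schauder–Tychonoff to the map $\nu\mapsto \mathcal{L}^*\nu/\mathcal{L}^*\nu(1)$ on the weak$^*$-compact convex set of probability measures; this gives $\mathcal{L}^*\mu=\lambda\mu$ with $\lambda=\mathcal{L}^*\mu(1)>0$. Next, consider the cone
\[
\Lambda_K=\{f>0:\ f(\xi)\le e^{K d(\xi,\eta)^\alpha}f(\eta)\ \text{whenever } d(\xi,\eta)<a\}.
\]
The distortion estimate shows that $\lambda^{-1}\mathcal{L}$ maps $\Lambda_K$ into $\Lambda_{K'}$ with $K'=\rho^\alpha K+C<K$ for $K$ large. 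The normalized elements $\{f\in\Lambda_K:\int f\,d\mu=1\}$ are uniformly bounded and equicontinuous, so Schauder's theorem yields a fixed point $h$. Positivity of $h$ comes from local comparability together with transitivity. Topological mixing gives an $N$ such that $\theta^{-N}$ of each $a$-ball meets every $a$-ball; this makes $\mathcal{L}^N$ map $\Lambda_K$ into a subcone of finite Hilbert diameter. Birkhoff's theorem then gives contraction in the projective metric, and hence uniqueness of $h$ and $\mu$ (the latter by duality) as well as uniqueness of $\lambda$.

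\textbf{Full support.} If $U$ is open, choose a ball $B\subset U$. By mixing, $\mathcal{L}^n(1_B)\ge c>0$ everywhere for some $n$: every point has an $n$-preimage in $B$, and $e^{S_n\varphi}$ is bounded below. Therefore
\[
\lambda^n\mu(B)=\int \mathcal{L}^n 1_B\,d\mu\ge c>0 .
\]
This step needs only an approximation of $1_B$ from below by a Hölder function.

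\textbf{Main obstacle.} The step I expect to be hardest is the finite-diameter step in the absence of a Markov partition. One must show that mixing plus the local inverse-branch structure produces a uniform $N$ with $\theta^{-N}(B(\xi,a))\cap B(\eta,a)\neq\emptyset$ for all $\xi,\eta$; compactness with a finite cover by $a/2$-balls handles this. One must then check that the resulting two-sided comparison $\sup \mathcal{L}^Nf/\inf\mathcal{L}^Nf\le C$ holds uniformly on the cone.
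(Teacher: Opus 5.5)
The paper gives no proof of this theorem. It is quoted as the classical Ruelle--Perron--Frobenius theorem for expanding maps (citing Ruelle and Parry--Pollicott) and is used only to normalize $\varphi$. Your proposal supplies a standard self-contained proof: Schauder--Tychonoff for an eigenmeasure, Schauder on a log-H\"older cone for $h$ (as in Bowen), and Birkhoff's projective-metric contraction for uniqueness. It is sound in outline, and it gives more than the statement, namely exponential convergence of $\lambda^{-n}\mathcal{L}^n f$ to $h\int f\,d\mu$.

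The coupling method of Theorem~\ref{theo:Wasserstein} could not substitute here. It works with $\mathcal{L}^\ast$ on probability measures and so presupposes $\mathcal{L}\mathbf{1}=\mathbf{1}$, which is exactly what this theorem is used to obtain.

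The step you flag as the main obstacle goes through as you suggest. Fix $N$ by compactness. The inverse branch of $\theta^N$ turns a point of $\theta^{-N}(B_{a/2}(\eta))\cap B_{a/2}(z)$ into an $N$-preimage of $\eta$ within $a$ of $z$. Taking $z$ to be a maximum point of $f$, this gives $\inf \mathcal{L}^N f\ge e^{-N\|\varphi\|_\infty-Ka^\alpha}\sup f$ on the cone.

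Several details should be made explicit.

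(1) You use $a$-separation of preimages to bound their number. It also gives the bijection between $\theta^{-1}(\xi)$ and $\theta^{-1}(\eta)$ for $d(\xi,\eta)<a$, which you need for continuity of $\mathcal{L}f$ and weak$^\ast$ continuity of $\mathcal{L}^\ast$. This separation comes from Ruelle's formulation, where $\tilde\eta$ is unique among preimages of $\eta$ in the $a$-ball around $\tilde\xi$. Definition~\ref{def:Ruelle-expanding} as printed only asks for uniqueness within distance $\rho\, d(\xi,\eta)$, which is vacuous for $\eta=\xi$. For example, the constant map of $\{0\}\cup\{1/k:k\in\N\}\cup\{2\}$ onto the isolated point $2$ satisfies it, yet $\mathcal{L}\mathbf{1}(2)=\infty$. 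So say explicitly that you use the standard definition.

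(2) Uniform boundedness of $\{f\in\Lambda_K:\int f\,d\mu=1\}$ needs $\inf_\xi \mu(B_{a/2}(\xi))>0$. Your full-support step, which uses only $\mathcal{L}^\ast\mu=\lambda\mu$, mixing and inverse branches, must therefore come before the Schauder step. Apply Schauder to the closed cone ($f\ge 0$), which is compact after normalization by Arzel\`a--Ascoli. Then recover $h>0$: the zero set of $h$ is clopen and $\mathcal{L}^n h=\lambda^n h$, so mixing forces it to be empty.

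(3) $\mathcal{L}^\ast\nu(\mathbf{1})>0$ for every $\nu$ requires $\theta$ to be onto. This holds because $\Sigma\setminus\theta(\Sigma)$ is open with empty preimages, which mixing rules out unless it is empty.

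(4) Uniqueness is up to scalars. The cleanest route is to extend $\lambda^{-n}\mathcal{L}^n f\to h\int f\,d\mu$ from cone elements (all large $K$) to all continuous $f$. Use density together with $\|\lambda^{-n}\mathcal{L}^n\|_\infty=\|\lambda^{-n}\mathcal{L}^n\mathbf{1}\|_\infty$. Then pair a competing eigenfunction with $\mu$, or a competing eigenmeasure with $h$; this forces the same eigenvalue and proportionality.
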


We would like to point out that this version of the Ruelle-Perron-Frobenius theorem is only a starting point for the theory of thermodynamic formalism for expanding and hyperbolic dynamical systems (see, e.g., \cite{Parry-Pollicott--Zeta-Functions-And-The---1990}). However, it suffices for our purposes here, as it allows us to normalize a potential function in the following sense. Recall that $\varphi$ is referred to as \emph{normalized} if the associated  operator satisfies $\mathcal{L}(\mathbf{1}) = \mathbf{1}$. On the other hand, under the assumptions of the theorem above, the existence of $\lambda$ and $h$ allows us to define
\[ \overline{\varphi} :=  \varphi + \log h - \log h\circ \theta - \log \lambda,
\]
which is a H\"older continuous and normalized potential, and  corresponds to a conjugation of $\lambda^{-1}\mathcal{L}$ with a multiplication operator.

\subsection{Skew products with a Ruelle expanding base}

We fix a further compact and metric space $(X,d_X)$ and a  map $\kappa: \Sigma \to \mathrm{Hom}(X)$, $\xi \to \kappa_\xi$, with $\mathrm{Hom}(X)$ referring to the space of homeomorphisms of $X$. We are now interested in a Perron-Frobenius theorem of the skew product
\[
T : \Sigma \times  X \to \Sigma \times  X, (\xi,x) \mapsto (\theta( \xi), \kappa(\xi)(x)).
\]
In order to introduce the relevant notations for the skew product, it is now necessary to recall some facts. As it is well-known, if $\theta$ is  \emph{$(a,\rho)$-Ruelle-expanding}, then  $\theta^n$ is  \emph{$(a,\rho^n)$-Ruelle-expanding}. In particular, for any $n \in \N$, $\xi^\ast \in \Sigma$ and $\xi = \theta^n(\xi^\ast)$, the inverse branch, denoted by
\[
\theta^{-n}_{\xi^\ast} : B_a( \xi )  \to   B_{\rho^n a}(\xi^\ast), \eta \mapsto \eta^\ast,
\]
where $\eta^\ast$ is the unique element in $B_{\rho^n a}(\xi^\ast)$ such that $\theta^n \eta^\ast  =\eta$, is well defined.  It then follows from standard arguments that $\theta^{-n}_\xi : B_a( \xi)  \to  \theta^{-n}_\xi (B_a( \xi))$ is a homeomorphism.

We proceed with giving the details of the underlying continuity assumptions. If $\alpha>0$, then  a map $f : A \to B$ between two metric spaces is \emph{$\alpha$-Hölder continuous} if there exists $D_\alpha(f) > 0$ such that  $d_B(f(x),f(y)) \leq D_\alpha d_B(x,y)^\alpha$.
From now on, we assume that $\Sigma$ and $X$ have diameter 1, that is, $\sup_{\xi_1,\xi_2} d_\Sigma(\xi_1,\xi_2) =1 $ and  $\sup_{x_1,x_2} d_X(x_1,x_2) =1$. Furthermore, with $\alpha \leq 1$
referring to the Hölder continuity of $\varphi$, define
\begin{equation}\label{eq:metric-of-the-skew-product}
d_{\Sigma \times X} ((\xi_1,x_1),(\xi_2,x_2)) :=  \tfrac{1}{2} \left(  d_{\Sigma} (\xi_1, \xi_2 )^\alpha + d_{X} (x_1,x_2)^\alpha  \right).
\end{equation}
We then say that $T$ has \emph{continuous inverse branches} if
for each pair $(\xi,\tilde{\xi})$ with $\theta \tilde{\xi} = \xi$, the map $B_a(\xi) \times X \to X$, $(\xi,x) \mapsto (\kappa(\xi)^{-1}(x))$ is continuous. Furthermore, we say that the skew product $T$ has \emph{Lipschitz continuous inverse branches}, if there exists $L > 0$ such that for each pair $(\xi_i)$ with $d(\xi_1,\xi_2) < a$, each pair $(\eta_i)$ with  $\theta(\eta_i) = \xi_i$ and $x_1, x_2 \in X$, \\
\begin{equation}
 \label{eq:Backward-Lipschitz}
d_{\Sigma \times X} ((\eta_1,\kappa(\eta_1)^{-1}(x_1)),(\eta_2,\kappa(\eta_2)^{-1}(x_2))) <   L d_{\Sigma \times X} ((\xi_1,x_1),(\xi_2,x_2)).
 \end{equation}

In order to have an effective notation at hand, define
\[\kappa_n(\xi) : =   \kappa(\theta^{n-1} \xi) \cdots  \kappa({\theta \xi}) \circ \kappa(\xi),
\quad  \varphi_n := \sum_{k=0}^{n-1}\varphi \circ \theta^k,
\]
where $\varphi : \Sigma \to \R$ is a fixed potential. In analogy to the above, the Ruelle operator is now defined by
\[ \mathcal{L}^n(f)(\xi,x) := \sum_{T^n(\eta,y) = (\xi,x)}  e^{\varphi_n (\eta)} f(\eta,y).  \]
As $\mathcal{L}$ leaves invariant the cone of positive and  continuous functions,  $\mathcal{L}^\ast$ acts on the space of finite measures through  $\int f \dd \mathcal{L}^\ast(m) := \int \mathcal{L}(f) \dd m $ and, as $\mathcal{L}(\mathbf{1}) = \mathbf{1}$, $\mathcal{L}^\ast$ acts on the set of probability measures.

\subsection{Contraction properties of $\mathcal{L}^\ast$}

The following definition now provides us with a weak form of topological exactness and expansion in average in the fiber. As we frequently will consider pairs of preimages, we will make use of the following notation, for $n\in \N$ and $\xi_1,\xi_2 \in \Sigma$ with $d_\Sigma(\xi_1,\xi_2)< a$:
\begin{align*}
\theta^n((\xi_i^\ast)) = (\xi_i)  : \iff & \theta^n \xi_i^\ast
=   \xi_i (i =1,2), d_\Sigma(\xi_1^\ast,\xi_2^\ast) \leq \rho^n   d(\xi_1,\xi_2) \\
 \iff &  \theta^n \xi_1^\ast =\xi_1, \xi_2^\ast = \theta_{\xi_1^\ast}^{-n}(\xi_2).
\end{align*}

\begin{definition}[Backward contraction in average] \label{def:LEEA} Assume that $(\Sigma,\theta)$ is $(a,\rho)$-Ruelle expanding, $\kappa: \Sigma \to \mathrm{Hom}(X)$ and that $\varphi$ is a $\alpha$-Hölder continuous and normalized potential.
\begin{enumerate}
 \item We say that $(\Sigma,\theta,\varphi,X,\kappa)$ has \emph{communicating preimages (cp)} if there exists $m_0$ such that, for all  $\xi, \eta \in \Sigma$ and $x,y \in X$, there exist  $\xi^\ast, \eta^\ast \in \Sigma$ with $\theta^{m_0}(\xi^\ast) = \xi$ and $\theta^{m_0}(\eta^\ast) = \eta$ such that
   $d_\Sigma(\xi^\ast,\eta^\ast)< a$  and  $d_X\left(\kappa_{m_0}(\xi^\ast)^{-1}(x), \kappa_{m_0}({\eta^\ast})^{-1}(y) \right)  <  a$.
 \item  We say that $(\Sigma,\theta,\varphi,X,\kappa)$ is \emph{locally eventually backward contracting in average (lebca)} if there exists $\rho^\ast < 1$ such that, for all $\xi_1, \xi_2 \in \Sigma$ and $x_1,x_2 \in X$ with $d_\Sigma(\xi_1,\xi_2)< a$ and $d_X(x_1,x_2)< a$,
\begin{align} \nonumber
  \sum_{\theta^{n_0} ((\xi_i^\ast)) = (\xi_i)} e^{\varphi_{n_0}(\xi_1^\ast)}
  \left(d_\Sigma(\xi_1^\ast,\xi_2^\ast)^\alpha + d_X\left(\kappa_{n_0}(\xi_1^\ast)^{-1}(x_1), \kappa_{n_0}(\xi_2^\ast)^{-1}(x_2) \right)^\alpha
 \right) \\ < \rho^{\ast} \left(d_\Sigma(\xi, \eta)^\alpha +  d_X(x,y)^\alpha\right) \label{eq:lecba}
.\end{align}
\end{enumerate}
If  (cp) and (lebca) hold, then $(\Sigma,\theta,\varphi,X,\kappa)$ is referred to as \emph{backward contracting in average}.
\end{definition}

At this point, it is worth noting that the (lebca)-condition is a contraction in average for the $\alpha$-Hölder distance. However, if the condition \eqref{eq:lecba} holds with respect to the exponent 1, Jensen's inequality implies that the (lebca)-condition holds with respect to the factor $(\rho^\ast)^\alpha$, provided that $\alpha \leq 1$.

We now recall the definition of the Wasserstein distance which is compatible with the weak convergence of probability measures. So assume that $m_1,m_2$ are  Borel probability measures defined on the metric space $(\Sigma \times X, d_{\Sigma \times X})$. A \emph{coupling} of $m_1$ and $m_2$ is a Borel probability measure $Q$ on  $(\Sigma \times X)^2$ such that its marginal distributions $Q \circ \pi_i^{-1}$ are equal to $m_i$, $(i=1,2)$. Furthermore, if the total mass of $Q$ is smaller than one, and  $Q \circ \pi_i^{-1} \leq  m_i$, $(i=1,2)$, we refer to $Q$ as a \emph{subcoupling}. The Wasserstein distance $W$ is now defined as
\[
W(m_1,m_2) := \inf \left\{ \int d_{\Sigma\times X}(z_1,z_2) \dd  P(z_1,z_2) :
P \hbox{ is a coupling of } m_1, m_2 \right\}.
\]
The main theorem of this section is a contraction result of $\mathcal{L}^\ast$ with respect to this metric.

\begin{theorem} \label{theo:Wasserstein}
 Assume that $(\Sigma, \theta)$ is Ruelle expanding, that $\varphi$ is $\alpha$-Hölder continuous for some $\alpha \leq 1$, that $\varphi$ is a normalized potential and that $\kappa: \Sigma \to \mathrm{Hom}(X)$ is a map such that $T$ is continuous and $(\Sigma,\theta,\varphi,X,\kappa)$ is backward contracting in average. Then there exist $\ell_0 \in \N$,   $\tau  \in (0,1)$ and $C > 0$ such that for any pair of Borel probability measures on $\Sigma \times X$ and $k \in \N$
 \[  W \left(  ( \mathcal{L}^\ast)^{k\ell_0} (m_1) ,( \mathcal{L}^\ast)^{k\ell_0} (m_2)  \right)   <   C  \tau^{k} W(m_1,m_2).\]
 If, in addition, $T$ has Lipschitz continuous inverse branches, then we may assume that  $\ell_0 = 1$.
 \end{theorem}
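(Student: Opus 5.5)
We prove the contraction by constructing explicit couplings of the pushed-forward measures $(\mathcal{L}^\ast)^n(m_i)$ out of a coupling of $m_1,m_2$. For fixed $z_1=(\xi_1,x_1)$, $z_2=(\xi_2,x_2)$ the measure $(\mathcal{L}^\ast)^n\delta_{z_i}$ is the weighted sum
\[
\sum_{\theta^n\xi^\ast=\xi_i} e^{\varphi_n(\xi^\ast)}\delta_{(\xi^\ast,\kappa_n(\xi^\ast)^{-1}x_i)} .
\]
The argument therefore reduces to building, for each pair $(z_1,z_2)$, a coupling $Q^n_{z_1,z_2}$ of $(\mathcal{L}^\ast)^n\delta_{z_1}$ and $(\mathcal{L}^\ast)^n\delta_{z_2}$ that depends measurably on $(z_1,z_2)$. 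Integrating it against an optimal coupling $P$ of $m_1,m_2$ then gives a coupling of $(\mathcal{L}^\ast)^n m_1$ and $(\mathcal{L}^\ast)^n m_2$, since $\mathcal{L}^\ast$ is linear. Hence it suffices to show
\[
\int d\, \dd Q^{\ell_0}_{z_1,z_2}\le \tau\, d(z_1,z_2)
\]
for all pairs, up to a constant in the iterated statement.

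\textbf{Close pairs.} Suppose $d_\Sigma(\xi_1,\xi_2)<a$ and $d_X(x_1,x_2)<a$. We pair the preimages along inverse branches, matching $\xi_1^\ast$ with $\xi_2^\ast=\theta^{-n}_{\xi_1^\ast}\xi_2$. The weights $e^{\varphi_n(\xi_1^\ast)}$ and $e^{\varphi_n(\xi_2^\ast)}$ differ by a factor $1+O(d_\Sigma(\xi_1,\xi_2)^\alpha)$, by the standard bounded-distortion estimate for Hölder $\varphi$ under contracting inverse branches. So we transport the common mass $\min(e^{\varphi_n(\xi_1^\ast)},e^{\varphi_n(\xi_2^\ast)})$ along the paired points. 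By (lebca) this part costs at most $\rho^\ast d(z_1,z_2)$ when $n=n_0$; iterating along blocks of length $n_0$, the close pairs stay close and the cost is at most $(\rho^\ast)^k d$. The unmatched mass has total size $O(d_\Sigma^\alpha)\le O(d(z_1,z_2))$. We transport it arbitrarily at cost at most $1$ (the diameter), contributing $C\,d(z_1,z_2)$.

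Here lies the main obstacle: this leftover term is of the same order as the gain. To beat it, we recouple the unmatched mass in later steps using (cp) and the first part, instead of paying its full diameter. We treat the leftover as a new pair of subprobability measures of mass $\le Cd$ and apply the far-pair argument below to it. After $\ell_0$ steps, a fixed proportion of that mass has been merged to within distance $a$ and then contracted. The leftover at each stage is geometrically smaller, so its total contributes $C'd\sum_k \epsilon^k$, and choosing the block length large makes the combined factor $<1$. Concretely, we aim for a bound
\[
W\big((\mathcal{L}^\ast)^{n}\mu_1,(\mathcal{L}^\ast)^{n}\mu_2\big)\le A\rho_1^{n}W(\mu_1,\mu_2)+B\,\epsilon_n\|\mu_1\|
\]
for subcouplings, and close it by induction.

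\textbf{Far pairs.} Suppose $d(z_1,z_2)\ge a^\alpha/2$, so that $d(z_1,z_2)$ is bounded below. By (cp) there are preimages $\xi^\ast,\eta^\ast$ of order $m_0$ whose lifted points are $a$-close. Their weights are bounded below by $\delta:=e^{-m_0\|\varphi\|_\infty}$, so a mass $\delta$ can be coupled into close pairs. The close-pair contraction then reduces their distance by $\tau$ after further iterates, while the remaining mass $1-\delta$ costs at most $1$. The resulting bound
\[
1-\delta+\delta C\tau^k \le \tau'\, a^\alpha/2\cdot (\text{const})^{-1}
\]
holds after rescaling once $k$ is large, giving $\int d\, \dd Q\le \tau'' d(z_1,z_2)$ with $\tau''<1$ for $\ell_0=m_0+kn_0$.

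Combining the two cases gives the contraction at step $\ell_0$, and iterating gives $C\tau^k$. If $T$ has Lipschitz inverse branches, each single step expands $W$ by at most $L$, which puts intermediate times under the same estimate. Hence one obtains $W((\mathcal{L}^\ast)^n m_1,(\mathcal{L}^\ast)^n m_2)\le C\tau^{n/\ell_0}W(m_1,m_2)$ for all $n$, i.e.\ $\ell_0=1$ after renaming $\tau$. Measurability of the selections $z\mapsto Q_z$ follows from the continuity of inverse branches together with a measurable selection argument.
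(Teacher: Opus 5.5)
Your architecture matches the paper's: couplings of $(\mathcal{L}^\ast)^n\Delta_{z_i}$ glued along a coupling of $m_1,m_2$, preimages paired along inverse branches with weight $e^{\min_i\varphi_n(\xi_i^\ast)}$, (cp) for distant pairs, and a one-step Lipschitz bound for $\ell_0=1$. You also correctly locate the obstacle, namely that the unmatched mass is of order $d(z_1,z_2)$ and eats the gain if paid at the diameter. \textbf{But you never overcome it, and the steps you state do not close.} In the original metric $d$ your far-pair step cannot contract. Your bound $1-\delta+\delta C\tau^k$ is of order $1$ ($\delta=e^{-m_0\|\varphi\|_\infty}$ may be tiny), while $d(z_1,z_2)$ can be as small as $a^\alpha/2$, so the displayed inequality is false as written. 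It also invokes the close-pair contraction, which is not yet available because of the very same leftover. ``After rescaling'' is exactly where the missing idea belongs.

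The paper's device is the equivalent truncated metric $d^\dagger=\min\{1,2C^\ast d\}$, where $C^\ast$ is the constant in the leftover bound (unmatched mass $\le C^\ast d(z_1,z_2)$). For close pairs, since $d^\dagger\le 1$, the leftover costs at most its mass $C^\ast d=d^\dagger/2$. The matched part scales with the metric and costs at most $(\rho^\ast)^k d^\dagger$, so the total contracts by the factor $(\rho^\ast)^k+\tfrac12$. Far pairs have $d^\dagger=1$, so moving a mass $c>0$ onto one pair with $d^\dagger<1/2$ gives cost $\le 1-c/2$. Both cases are genuine one-step contractions in $d^\dagger$, and $d\le d^\dagger\le 2C^\ast d$ returns the theorem with $C=2C^\ast$.

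Your recursive recoupling could be made rigorous as a two-type recursion for (close cost, far mass), whose Lyapunov weight is essentially $d^\dagger$. It does not work with the specifics you give, for two reasons.
\begin{enumerate}
\item Merging a fixed proportion only ``to within distance $a$'' gains nothing. The escaping-mass estimate alone makes the leftover constant of order $a^{-\alpha}(1-\rho^\ast)^{-1}$, so for a pair at distance comparable to $a^\alpha$ the bound on regenerated unmatched mass exceeds its own mass. You must push one pair of preimages to a distance $\epsilon$ with $C^\ast\epsilon$ small. The paper does this by iterating (lebca) on a single pair: a weighted average below $\rho^\ast d$ forces some pair below $\rho^\ast d$, and its weight is bounded below by compactness. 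Lengthening the blocks does not help, since the leftover constant does not depend on the block length.
\item ``The close pairs stay close'' is false. (lebca) is only an average, so some paired preimages leave the $a$-neighbourhood where (lebca) can be reapplied. You need the paper's Step 1: a Markov-type bound showing that the weight of pairs first escaping at block $j$ is at most $a^{-\alpha}(\rho^\ast)^j(d_\Sigma^\alpha+d_X^\alpha)$. Escaping pairs then contribute only a further $O(d)$ to the leftover rather than breaking the iteration.
\end{enumerate}
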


\begin{proof}
Throughout this proof, assume that $a > 0$, $\rho, \rho^\ast < 1$ and $m_0,n_0$ are as in the definitions of Ruelle expanding, (cp) and (lebca). Furthermore, it follows from the continuity of $T$, that $T$ has continuous inverse branches. It then follows from compactness that the inverse branches are equicontinuous. By combining (cp) and (lebca), it is now easy to see that we may assume that $m_0 = \ell n_0$, that is, $m_0$ is a multiple of $n_0$. In addition, for ease of notation, we assume that  $\rho^\ast =  \rho^{\alpha n_0}$. For $\xi_1,\xi_2 \in \Sigma$, set $d_\Sigma((\xi_i)) := d_\Sigma(\xi_1,\xi_2)$.

It now follows exclusively from the assumptions on $\theta$ and $\varphi$ that, if $d_\Sigma((\xi_i))< a$, then, for each pair $((\xi_i^\ast))$  with $\theta^n((\xi_i^\ast)) = (\xi_i)$ and with $C_\varphi :=  (e -1)D_\alpha(\varphi)/(1-\rho)$,
\begin{align}
\label{eq:equihoelder}
|\varphi_n(\xi_1^\ast) - \varphi_n(\xi_2^\ast)| & \leq \sum_{k=1}^n D_\alpha(\varphi) \rho^k d_\Sigma((\xi_i))^\alpha \leq D_\alpha(\varphi) d_\Sigma((\xi_i))^\alpha / (1-\rho),\\
\nonumber
e^{|\varphi_n(\xi_1^\ast) - \varphi_n(\xi_2^\ast)| } -1 & \leq  C_\varphi  d_\Sigma((\xi_i))^\alpha.
\end{align}

\medskip
\noindent\textsc{Step 1: Eventual contraction in average.}
In this step, we derive an estimate from condition (lebca) for the weight of those pairs of preimages whose second coordinates at least once have a distance bigger than $a$ until time $k$.
To formalize that, assume that $d_\Sigma((\xi_i))< a$  and $d_X((x_i))< a$. For $k \in \N$, set
\begin{align*}
\mathcal{C}_k & := \left\{ (\xi^\ast_i)_{i=1,2} : \theta^{kn_0}(( \xi^\ast_i)) = (\xi_i),d_X(( \kappa_{kn_0}(\xi^\ast_i)^{-1}(x_i))
 ) \geq a, \right. \\
& \phantom{ := \left\{ (\xi^\ast_i)_{i=1,2} : \right.}
\left. d_X(( \kappa_{jn_0}(\theta^{(k-j)n_0}\xi^\ast_i)^{-1}(x_i))
 ) <  a \hbox{ for } j= 1, \ldots, k-1 \right\},\\
\mathcal{D}_k & := \left\{ (\xi^\ast_i)_{i=1,2} : \theta^{kn_0}(( \xi^\ast_i)) = (\xi_i),
 d_X(( \kappa_{jn_0}(\theta^{(k-j)n_0}\xi^\ast_i)^{-1}(x_i))
 ) <  a  \hbox{ for } j= 1, \ldots, k\right\},\\
\mathcal{E}_k & := \left\{ (\xi^\ast_i)_{i=1,2} : \theta^{kn_0}(( \xi^\ast_i)) = (\xi_i), \exists j \in \{1,\ldots,k\} \hbox{ s.t. }
 d_X(( \kappa_{jn_0}(\theta^{(k-j)n_0}\xi^\ast_i)^{-1}(x_i))
 ) \geq  a\right\} .
\end{align*}
It follows from condition (lebca) that
\[
  a^\alpha \sum_{(\xi_i^\ast) \in  \mathcal{C}_1}
  e^{\varphi_{n_0}(\xi_1^\ast)}
  \leq    \sum_{\theta^{n_0}(( \xi^\ast_i)) = (\xi_i)}
  e^{\varphi_{n_0}(\xi_1^\ast)} d_X\left(
  ( \kappa_{n_0}(\xi^\ast_1)^{-1}(x_i))
   \right)^\alpha  < \rho^{\ast} \left(d_\Sigma((\xi_i))^\alpha + d_X((x_i))^\alpha \right).
\]
If $(\xi_i^\ast) \in  \mathcal{C}_{k}$ and $x^\ast_i = \kappa_{kn_0}(\xi_i^\ast)^{-1}(x_i)$, then we write $(\xi_i^\ast,x^\ast_i) \in  \mathcal{C}_{k}^\ast$, and $(\xi_i^\ast,x^\ast_i) \in  \mathcal{D}_{k}^\ast$, respectively.
Moreover, in case we need to take care of the dependence of $\mathcal{C}_k$ from the pair $((\xi_i,x_i))$, we will write $\mathcal{C}_k((\xi_i,x_i))$. It now follows from the above that
\begin{align*}
\sum_{(\xi_i^\ast) \in \mathcal{C}_k} e^{\varphi_{k n_0}(\xi_1^\ast)}
   & = \sum_{(\xi_i^\ast,x_i^\ast) \in  \mathcal{D}_{k-1}^\ast} e^{\varphi_{(k-1) n_0}(\xi_1^\ast)}
    \sum_{(\eta_i) \in \mathcal{C}^\ast_1(\xi_i^\ast,x_i^\ast)}  e^{\varphi_{n_0}(\eta_1)},
\\ & \leq
    a^{-\alpha} \rho^{\ast} \sum_{_{(\xi_i^\ast,x_i^\ast) \in  \mathcal{D}_{k-1}^\ast}} e^{\varphi_{(k-1) n_0}(\xi_1^\ast)}  \left(d_\Sigma((\xi_i^\ast))^\alpha + d_X((x_i^\ast))^\alpha \right)
\\  & \leq
      a^{-\alpha}(\rho^{\ast})^2 \sum_{(\xi_i^\ast,x_i^\ast) \in  \mathcal{D}_{k-2}^\ast}
      e^{\varphi_{(k-2) n_0}(\xi_1^\ast)} \left(d_\Sigma((\xi_i^\ast))^\alpha + d_X((x_i^\ast))^\alpha \right)
\\ & \leq
      a^{-\alpha} (\rho^{\ast})^k  \left(d_\Sigma((\xi_i))^\alpha + d_X((x_i))^\alpha \right).
\end{align*}
Using the property that the potential is normalized, it now follows from the last estimate that
\begin{align} \label{eq:measure_of_bad_part}
\sum_{(\xi_i^\ast) \in \mathcal{E}_k} e^{\varphi_{k n_0}(\xi_1^\ast)} \leq \sum_{j \geq 1}  a^{-\alpha} (\rho^{\ast})^k d_X((x_i))^\alpha \leq \left(a^\alpha(1-\rho^{\ast})\right)^{-1}  \left(d_\Sigma((\xi_i))^\alpha + d_X((x_i))^\alpha \right).
\end{align}

\medskip
\noindent\textsc{Step 2: Local contraction.}
Assume that $k \in \N$,  $d_\Sigma((\xi_i))< a$ and $d_X((x_i))< 4 (C_\varphi + a(1-\rho^\ast))$. Furthermore, define
\begin{align*}
Q & := \sum_{(\xi_i^\ast,x_i^\ast) \in \mathcal{D}_k^\ast}
e^{\min_i   \varphi_{kn_0}(\xi_i^\ast) }
\Delta_{
\left(\xi_1^\ast, x_1^\ast\right),\left(\xi_2^\ast, x_2^\ast \right)}.
\end{align*}
It now follows from \eqref{eq:equihoelder} and \eqref{eq:measure_of_bad_part} that, with
$C^\ast := 2\left( ({a^\alpha(1-\rho^\ast)})^{-1} +  C_\varphi \right)$,
\begin{align*}
1 - Q(X^2) & \leq \tfrac{1}{a^\alpha(1-\rho^\ast)} \left(d_\Sigma((\xi_i))^\alpha + d_X((x_i))^\alpha \right)  +  C_\varphi d((\xi_i))^\alpha
\\ & \leq  \left( \tfrac{1}{a^\alpha(1-\rho^\ast)}  \right) + C_\varphi \left( d((\xi_i))^\alpha + d((x_i))^\alpha
 \right)
 =  C^\ast d_{\Sigma \times X}((\xi_i,x_i)).
\end{align*}

Furthermore, the projections $\pi_i$ ($i=1,2$) to the first and the second coordinate, respectively, satisfy
 \[
 Q \circ \pi_i^{-1}  \leq  \sum_{ (\xi_i^\ast,x_i^\ast) \in \mathcal{D}_k^\ast} e^{\varphi_{kn_0} ( \xi_i^\ast)} \Delta_{\left(\xi_i^\ast,x_i^\ast \right)} = (\mathcal{L}^{k n_0})^{\ast}(\Delta_{(\xi_i,x_i)}) .
 \]
Hence, $Q$ is a subcoupling of $(\mathcal{L}^{kn_0})^{\ast}(\Delta_{(\xi_i,x_i)})$ ($i=1,2$). As it is well-known, it is possible to extend $Q$ to a coupling of these measures by adding the subcoupling
\[ R = \left(1 - Q((\Sigma \times X)^2)\right)^{-1} \bigotimes_{i=1,2}  \left( m_i -  Q \circ \pi_i^{-1} \right) \]
of mass smaller than $C_1 d(\xi_1,\xi_2)^\alpha$. For
\[ d^\dagger((\xi_1,x_1),(\xi_2,x_2)) := \min \left\{ 1, 2C^\ast  d_{\Sigma \times X}\left((\xi_1,x_1),(\xi_2,x_2)\right)
\right\}
\]
and  for $(\xi_i,x_i)$ with $d^\dagger((\xi_i,x_i)) < 1$, this now implies that
\begin{align*}
   \int d^\dagger \dd(Q+R)
  \leq & \sum_{ (\xi_i^\ast,x_i^\ast) \in \mathcal{D}_k} e^{\varphi_{kn_0}(\xi_1^\ast)}
  C^\ast    \left( d((\xi^\ast_i))^\alpha + d(  (x^\ast_i))^\alpha \right)   + R(\Sigma \times X)^2 \\
 \leq  & C^\ast \left(  \rho^{\alpha kn_0} d((\xi_i))^\alpha + \rho^{\alpha kn_0} d((x_i))^\alpha  \right) +  C^\ast   d_{\Sigma \times X}(((\xi_i,x_i)))
 \\  \leq  &  \left(\rho^{\alpha k n_0}  + \tfrac{1}{2}\right)
 d^\dagger(((\xi_i,x_i))).
\end{align*}

\medskip
\noindent\textsc{Step 3: Global contraction.}
For  $(\xi_i,x_i)$ with $d^\dagger((\xi_1,x_1),(\xi_2,x_2)) = 1$, we proceed as follows. By (cp), there is $m_0$ such that for some pair $(\xi_i^\ast)$ with $\theta^{m_0}((\xi_i^\ast)) = (\xi_i) $, we have that
$d_\Sigma((\xi_i^\ast))< a$  and  $d_X\left((\kappa_{m_0}(\xi_i^\ast)^{-1}(x_i))  \right)  < a$.
Thereafter, one applies assumption (lebca) in order to choose a pair $(\xi_i^\ast)$ with $\theta^{n_0 + m_0}((\xi_i^\ast)) = (\xi_i) $, $d_\Sigma((\xi_i^\ast))< \rho^{n_0}a$,  and  $d_X\left((\kappa_{n_0 + m_0}(\xi_i^\ast)^{-1}(x_i))\right)  < \rho^{\alpha n_0} a$.
Using the assumption that $m_0$ is a multiple of $n_0$, it follows by induction that there exists $\ell$ and a pair
$(\xi_i^\ast)$ with $\theta^{\ell n_0}((\xi_i^\ast)) = (\xi_i) $ such that
\[
d^\dagger \left( ( \xi_1^\ast , \kappa_{\ell n_0 }(\xi_1^\ast)^{-1}(x_1)) ,  (\xi_2^\ast , \kappa_{\ell n_0}({\xi_2^\ast})^{-1}(x_2) ) \right)   < 1/2.
\]
Observe that, by compactness, $C_\varphi^\ast := \inf_\xi \exp \varphi_{\ell n_0}(\xi) > 0$. It now follows in analogy to the above that
\[Q := C_\varphi^\ast    \Delta_{( \xi_1^\ast , \kappa_{\ell n_0 }(\xi_1^\ast)^{-1}(x_1)) ,  (\xi_2^\ast , \kappa_{\ell n_0}({\xi_2^\ast})^{-1}(x_2) )}  \]
is a subcoupling, which can be extended to a coupling by adding $R$ with $R(\Sigma\times X)^2 = 1 - C_\varphi^\ast$.
Hence,
\[
 \int d^\dagger \dd(Q+R) \leq  C_\varphi^\ast/2 +  R(\Sigma\times X)^2 = ( 1 - C_\varphi^\ast / 2) d^\dagger((\xi_1,x_1),(\xi_2,x_2)) .
\]

\medskip
\noindent\textsc{Step 4: Iteration.}
By eventually increasing $\ell$, we may assume that $\rho^{\alpha \ell n_0}  + {1}/{2} <  1 - C_\varphi^\ast / 2$.  Hence, for $\tau:= 1 - C_\varphi^\ast / 2$ and any pair $((\xi_i,x_i))$, the above two steps imply the existence of a coupling $P^{\ell n_0}_{((\xi_i,x_i))}$ of $(\mathcal{L}^\ast(\Delta_{((\xi_i,x_i))}))$ such that
\[
\int d^\dagger \dd P^{\ell n_0}_{((\xi_i,x_i))} \leq \tau   d^\dagger((\xi_i,x_i)).
\]
Now assume that $m_1,m_2$ are two probability measures on $\Sigma \times X$, and that $P_{(m_i)}$ is a coupling of those measures. It is now easy to check that, for $k \in \N$,
\[
P^k_{(m_i)} :=  \int \dd P^{\ell n_0}_{(z_i^{(k)})}  \cdots  \dd P^{\ell n_0}_{(z_i^{(2)})}((z_i^{(3)})) \dd P^{\ell n_0}_{(z_i^{(1)})}((z_i^{(2)})) \dd  P_{(m_i)}((z_i^{(1)}))
\]
is a coupling of $((\mathcal{L}^\ast)^{k\ell n_0}(m_i))$. Moreover,
\begin{align*}
\int d_{\Sigma \times X} \dd P^k_{(m_i)} \leq \int d^\dagger \dd P^k_{(m_i)} \leq \tau^k \int
d^\dagger \dd P_{(m_i)} \leq 2 C^\ast \tau^k  \int d_{\Sigma \times X} \dd  P_{(m_i)}.
\end{align*}
The first part of the theorem now follows with respect to $\ell_0 := \ell n_0$ and $C = 2 C^\ast$.

\medskip
\noindent\textsc{Step 5: Lipschitz continuity.} We now show that $\mathcal{L}^\ast$ is Lipschitz continuous, provided that $T$ has Lipschitz continuous inverse branches.
In order to do so, for $d_\Sigma ((\xi_i))< a$ and $x_1,x_2 \in X$, set
\begin{align*}
Q & := \sum_{T((\xi_i^\ast,x_i^\ast)) = (\xi_i,x_i)}
e^{\min_i   \varphi(\xi_i^\ast) }
\Delta_{
\left(\xi_1^\ast, \kappa( \xi_1^\ast)^{-1}(x_1)\right),\left(\xi_2^\ast, \kappa( \xi_2^\ast)^{-1}(x_2)\right)}.
\end{align*}
By adding an adequately chosen subprobability measure $R$ on $\Sigma \times X$ as in Step 2, one then obtains a coupling of $\mathcal{L}^\ast(\Delta_{(\xi_i,x_i)})$. With $L$ referring to the maximum of the constants as in \eqref{eq:Backward-Lipschitz}, one then obtains that
\begin{align*}
  2 \int d_{\Sigma \times X} \dd (Q+R)
\leq  &
  \sum_{\theta((\xi_i^\ast)) = (\xi_i)} e^{\min_i   \varphi(\xi_i^\ast) }
  \left(   d_\Sigma((\xi_i^\ast))^\alpha  + d_X((\kappa( \xi_i^\ast)^{-1}(x_i)))^\alpha \right) + R((\Sigma \times X)^2)
\\ \leq &
  \sum_{\theta((\xi_i^\ast)) = (\xi_i)}  e^{\min_i   \varphi(\xi_i^\ast) }
  \left( d_\Sigma((\xi_i^\ast)) + \left(  d_X((\kappa(\xi_1^\ast)^{-1}(x_i))) +   d_X((\kappa(\xi_i^\ast)^{-1}(x_2))) \right)^\alpha \right)
\\  &   + R((\Sigma \times X)^2)
\\ \leq &
  \rho^\alpha  d_\Sigma((\xi_i))^\alpha + 2L  d_{\Sigma \times X}((\xi_i,x_i)) +  C_\varphi d_\Sigma((\xi_i))^\alpha.
\end{align*}
Observe that this estimate implies that the action of  $\mathcal{L}^\ast(\Delta_{(\xi_i,x_i)})$ on Borel probability measures is Lipschitz continuous. The theorem follows from this with respect to the contraction rate $\sqrt[\ell n_0]{\tau}$ and the constant
$C (1 + L + C_\varphi)^{\ell n_0}$.
\end{proof}

\subsection{Ruelle's theorem for the skew product}

Theorem \ref{theo:Wasserstein} has the following immediate consequences. By applying Banach's contraction principle, one obtains that there exists a unique probability measure $\mu$ with $(\mathcal{L}^\ast)^{\ell_0}(\mu) = \mu$. Now assume that $f:\Sigma \times X \to \R$ satisfies
$|f(z_1) -f(z_2)| \leq D_\alpha(f) d_{\Sigma \times X}(z_1,z_2)$.
It now follows from Theorem \ref{theo:Wasserstein} applied to
probability measures $\mu$ and $\Delta_{\zeta}$, for $\zeta\in \Sigma \times X$ and $k \in \N$  that there exists a coupling  $P^k_{\mu,\zeta}$ of
$(\mathcal{L}^\ast)^{k\ell_0}(\mu)$ and $(\mathcal{L}^\ast)^{k\ell_0} (\Delta_{\zeta})$
such that $\int  d  \dd P^k_{\mu,\zeta} \leq C^\ast \tau^k W(\mu,\Delta_{\zeta})$. As
$W(\mu,\Delta_{\zeta}) \leq 1$,
\begin{align*}
 \mathcal{L}^{k\ell_0}(f)(\zeta) - \int f \dd \mu
 & = \int f \dd (\mathcal{L}^\ast)^{\ell_0}(\Delta_{\zeta})  -
 \int f  \dd (\mathcal{L}^\ast)^{\ell_0}(\mu) \\
 & = \int f(z_1) - f(z_2) \dd P^k_{\mu,\zeta}(z_1,z_2) \\
 & \leq D_\alpha(f) \int d_{\Sigma \times X}(z_1,z_2) \dd P^k_{\mu,\zeta}(z_1,z_2) \leq C^\ast D_\alpha(f) \tau^k.
\end{align*}
By repeating the argument for $\Delta_{\zeta_i}$, for $\zeta_i \in \Sigma \times X$ and $i =1,2$, one obtains that
\begin{align*}
 \mathcal{L}^{k\ell_0}(f)(\zeta_1) -  \mathcal{L}^{k\ell_0}(f)(\zeta_2)
 &=  \int f(z_1) - f(z_2) \dd P^k_{\zeta_1,\zeta_2}(z_1,z_2) \\
 & \leq D_\alpha(f) \int d_{\Sigma \times X}(z_1,z_2) \dd P^k_{\zeta_1,\zeta_2}(z_1,z_2)
 \\& \leq C^\ast D_\alpha(f) \tau^k d_{\Sigma \times X}(\zeta_1,\zeta_2) .
\end{align*}
These two observations can now be formalized in the following theorem in terms of the action of $\mathcal{L}$ on Lipschitz continuous functions with respect to $d_{\Sigma\times X} = (d_\Sigma^\alpha + d_X^\alpha)/2$  as follows. In order to do so, for $f:\Sigma \times X \to \R$, define
\[
D_\alpha(f) := \sup \left\{  \frac{2 |f(\xi_1, x_1) - f(\xi_2, x_2)|}{d_\Sigma(\xi_1,\xi_2)^\alpha + d_X(x_1,x_2)^\alpha} : (\xi_1,x_1) \neq (\xi_2,x_2) \right\}, \quad
\|f\|_\alpha := \|f\|_\infty + D_\alpha(f).
\]

 \begin{theorem} \label{theo:Ruelle}
 Assume that $(\Sigma, \theta)$ is Ruelle expanding, that $\varphi$ is an $\alpha$-Hölder continuous for some $\alpha \leq 1$, normalized potential and that $\kappa: \Sigma \to \mathrm{Hom}(X)$ is a  map such that $T$ is continuous and $(\Sigma,\theta,\varphi,X,\kappa)$ is backward contracting in average. Then there exist a probability measure $\mu$,  $\ell_0 \in \N$,   $\tau  \in (0,1)$ and $C^\ast > 0$ such that for any $f: \Sigma \times X \to \R$,
 \[
 \left\|  \mathcal{L}^{k\ell_0}(f) - \textstyle \int f \dd \mu  \right\|_\alpha \leq C^\ast \tau^k D_\alpha(f).
 \]
 If $T$ has Lipschitz continuous inverse branches, then $\ell_0 =1$.
 \end{theorem}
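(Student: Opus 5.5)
The proof is a duality argument built on Theorem \ref{theo:Wasserstein}. Dualizing the Wasserstein contraction gives both a unique invariant measure and the uniform decay of $\mathcal{L}^{k\ell_0}(f)$ towards $\int f \dd \mu$.

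\textbf{Invariant measure.} Let $\ell_0,\tau,C$ be the constants of Theorem \ref{theo:Wasserstein}. The space of Borel probability measures on the compact space $\Sigma\times X$, with the metric $W$, is complete. For $k$ large, $C\tau^k<1$, so $(\mathcal{L}^\ast)^{k\ell_0}$ is a strict contraction. Banach's fixed point theorem then gives a unique fixed point $\mu$ of $(\mathcal{L}^\ast)^{k\ell_0}$. Since $(\mathcal{L}^\ast)^{\ell_0}(\mu)$ is also fixed by $(\mathcal{L}^\ast)^{k\ell_0}$, uniqueness yields $(\mathcal{L}^\ast)^{\ell_0}\mu=\mu$. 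In fact $\mathcal{L}^\ast\mu$ is fixed by the same power, so $\mathcal{L}^\ast\mu=\mu$.

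\textbf{Sup-norm bound.} Fix $f$ with $D_\alpha(f)<\infty$, noting that $|f(z_1)-f(z_2)|\le D_\alpha(f)\, d_{\Sigma\times X}(z_1,z_2)$ by the normalization of $D_\alpha$. For $\zeta\in\Sigma\times X$, write
\[
\mathcal{L}^{k\ell_0}(f)(\zeta)=\int f \dd (\mathcal{L}^\ast)^{k\ell_0}(\Delta_\zeta), \qquad \int f\dd\mu=\int f\dd(\mathcal{L}^\ast)^{k\ell_0}(\mu).
\]
Take a near-optimal coupling of these two measures, which Theorem \ref{theo:Wasserstein} provides (an optimal coupling also exists by compactness). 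Integrating $f(z_1)-f(z_2)$ against it bounds the difference by $D_\alpha(f)\,C\tau^kW(\Delta_\zeta,\mu)$. Since $d_{\Sigma\times X}\le 1$, we have $W\le 1$, and therefore $\|\mathcal{L}^{k\ell_0}f-\int f\dd\mu\|_\infty\le C\tau^kD_\alpha(f)$.

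\textbf{Hölder seminorm bound.} The same argument applied to $\Delta_{\zeta_1},\Delta_{\zeta_2}$ uses $W(\Delta_{\zeta_1},\Delta_{\zeta_2})=d_{\Sigma\times X}(\zeta_1,\zeta_2)$. This gives $|\mathcal{L}^{k\ell_0}f(\zeta_1)-\mathcal{L}^{k\ell_0}f(\zeta_2)|\le C\tau^kD_\alpha(f)\,d_{\Sigma\times X}(\zeta_1,\zeta_2)$. Subtracting the constant $\int f\dd\mu$ does not change $D_\alpha$, so $D_\alpha(\mathcal{L}^{k\ell_0}f-\int f\dd\mu)\le C\tau^kD_\alpha(f)$. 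Adding the two bounds gives the claim with $C^\ast=2C$.

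\textbf{Lipschitz case.} If $T$ has Lipschitz continuous inverse branches, Theorem \ref{theo:Wasserstein} already allows $\ell_0=1$, and the same argument goes through verbatim. Alternatively, one can write $n=k\ell_0+r$ and use the Lipschitz bound of Step 5 on $\mathcal{L}^\ast$ for the remaining $r<\ell_0$ iterates.

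The only delicate point is to verify that the bounds are uniform in $\zeta$ and $(\zeta_1,\zeta_2)$, and that couplings realizing the Wasserstein bound exist. I would handle both through the constant $C$ of Theorem \ref{theo:Wasserstein}, which does not depend on the measures, together with compactness. Everything else is routine duality.
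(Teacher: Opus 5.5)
Your proposal is correct and follows the paper's argument essentially verbatim. The paper also obtains $\mu$ from Banach's contraction principle. It then dualizes the Wasserstein contraction of Theorem~\ref{theo:Wasserstein} through couplings of $(\mathcal{L}^\ast)^{k\ell_0}(\Delta_\zeta)$ with $(\mathcal{L}^\ast)^{k\ell_0}(\mu)=\mu$ for the sup-norm bound, using $W\le 1$. For the H\"older seminorm it couples $(\mathcal{L}^\ast)^{k\ell_0}(\Delta_{\zeta_1})$ with $(\mathcal{L}^\ast)^{k\ell_0}(\Delta_{\zeta_2})$, using $W(\Delta_{\zeta_1},\Delta_{\zeta_2})=d_{\Sigma\times X}(\zeta_1,\zeta_2)$. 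You are more careful than the paper on small points it glosses over: you pass to a power with $C\tau^k<1$ before invoking Banach, you deduce $\mathcal{L}^\ast\mu=\mu$ from uniqueness, and you take $C^\ast=2C$ so that both parts of $\|\cdot\|_\alpha$ are covered.
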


Note that it follows from Theorem \ref{theo:Ruelle} that $\mathcal{L}^{\ell_0}$ acts on the Banach space ${ f : \|f\|_\alpha < \infty }$. Now define $\Pi(f) := \left( \int f   \mathrm{d}\mu \right) \mathbf{1}$ and $S := \mathcal{L}^{\ell_0} - \Pi$. Then, by Theorem \ref{theo:Ruelle} and the spectral radius formula, the spectral radius of $S$ is less than or equal to $\tau$. In addition, it follows that the kernel of $\Pi - \mathrm{id}$ consists of constant functions, and in particular, $\dim \ker(\Pi - \mathrm{id}) = 1$. In other words, $\mathcal{L}^{\ell_0}$ exhibits a so-called \emph{spectral gap}.

As is well known, Theorem \ref{theo:Ruelle}, or equivalently, the fact that $\mathcal{L}^{\ell_0}$ has a spectral gap, has an abundance of implications, ranging from the decay of correlations for equilibrium states to asymptotic counting and the study of $\zeta$-functions (see, e.g., \cite{Parry-Pollicott--Zeta-Functions-And-The---1990}). However, in the present context, our focus lies on approximating the partial sums by a $d$-dimensional Brownian motion, which is one the topics of the next section.

\section{Consequences of the Ruelle theorem}
\label{sec:consequences-of-Ruelle}
In this section, we present two non-standard applications of the Ruelle operator theorem. The first establishes a vector-valued almost sure invariance principle for semi-flows, while the second provides a synchronization interpretation of the underlying proof strategy.

\subsection{Vector valued almost sure invariance principles}
This section is devoted to the proof of a vector-valued almost sure invariance principle (Theorem \ref{theo:vasip}) for partial sums of Hölder observables and its applications. For the exposition and the proof of the invariance principle, we closely follow \cite{Gouezel--Almost-Sure-Invariance-Principle--AP2010}.

\begin{definition} \label{def:vasip}
We say that an $\R^d$-valued process $(A_n)$ satisfies a vector-valued almost sure invariance principle of error exponent $\lambda \in (0,1/2]$ of mean $a$ and covariance matrix  $\mathbb{V}$ if there exist $a \in \R^d$, a semi-positive-definite $d \times d$-matrix $\mathbb{V}$, a probability space $\Omega$, an $\R^d$-valued process $(A^\ast_n)$ defined on $\Omega$ and a $d$-dimensional Brownian motion $(B_t)$ with covariance matrix $\mathbb{V}$ such that
\begin{enumerate}
 \item $(A_0,A_1,A_2 \ldots )$ and $(A^\ast_0,A^\ast_1,A^\ast_2 \ldots )$ have the same distribution,
 \item $ \sum_{j=0}^{n-1} A^\ast_j = B_n + an + o(n^\lambda)$ almost surely.
\end{enumerate}
\end{definition}

The main result of this section establishes an almost sure invariance principle for the process $(f\circ T^n)$, provided that $f$ is Hölder continuous and $T$ is as in Theorems \ref{theo:Wasserstein} and \ref{theo:Ruelle}.

\begin{theorem} \label{theo:vasip}
  Assume that $(\Sigma, \theta)$ is Ruelle expanding, that $\varphi: \Sigma \to \R$ is  $\alpha$-Hölder continuous for some $\alpha \leq 1$, that $\varphi$ is a normalized potential and that $\kappa: \Sigma \to \mathrm{Hom}(X)$ is a map such that $(\Sigma,\theta,\varphi,X,\kappa)$ is backward contracting in average and has Lipschitz continuous inverse branches.
  Furthermore, assume that $\mu$ refers to the unique measure with $\mathcal{L}^\ast(\mu) = \mu$  (cf. Theorem \ref{theo:Wasserstein}) and that $f : \Sigma \times X \to \R^d$ is Lipschitz continuous with respect to the metric $d_{\Sigma\times X}$ (cf. \eqref{eq:metric-of-the-skew-product}) and the Euclidean metric on $\R^d$.

  Then the process $(f\circ T^n)$ distributed according to $\mu$ satisfies a vector-valued almost sure invariance principle of error exponent $\lambda$, for any $\lambda > 1/4$, mean $a =  \int f d\mu$ and some covariance matrix  $\mathbb{V}$.   %
\end{theorem}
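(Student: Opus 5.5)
We derive the statement from Gouëzel's spectral criterion for the vector-valued almost sure invariance principle in \cite{Gouezel--Almost-Sure-Invariance-Principle--AP2010}. That criterion requires a Banach space $\mathcal{B}$ of functions and a family of operators $\mathcal{L}_t$, $t \in \R^d$ small, such that
\[
\int e^{i\langle t_1, f\rangle} \cdots e^{i \langle t_n, f\circ T^{n-1}\rangle} \dd\mu = \mu\bigl(\mathcal{L}_{t_n}\cdots \mathcal{L}_{t_1}\mathbf{1}\bigr),
\]
together with the following hypotheses: $\mathcal{L}_0$ has a spectral gap on $\mathcal{B}$, the map $t\mapsto \mathcal{L}_t$ is continuous at $0$ (in fact $C^2$ suffices), and there is a uniform bound $\|\mathcal{L}_{t_n}\cdots\mathcal{L}_{t_1}\|\le C$ for all $|t_i|$ small. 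Under these conditions Gouëzel's theorem yields error exponent $\lambda$ for any $\lambda>1/4$. This is the sharp form of his result, since the operator norm perturbation is analytic, and hence better than Hölder-continuous perturbation.

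\textbf{Choice of space and twisted operators.} We take $\mathcal{B} = \{g : \|g\|_\alpha<\infty\}$, the space of Lipschitz functions for $d_{\Sigma\times X}$, with $\mathcal{L}_0=\mathcal{L}$. The twisted operators are $\mathcal{L}_t(g) := \mathcal{L}(e^{i\langle t, f\rangle} g)$. The displayed identity then follows from duality, $\int g\circ T \cdot h \dd\mu = \int g \mathcal{L}(h)\dd\mu$, which holds because $\mathcal{L}^\ast\mu=\mu$ and $\mathcal{L}\mathbf{1}=\mathbf{1}$.

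\textbf{Spectral gap of $\mathcal{L}_0$.} By Theorem \ref{theo:Ruelle} with $\ell_0=1$ (Lipschitz inverse branches), $\mathcal{L}$ acts boundedly on $\mathcal{B}$ and $\mathcal{L}-\Pi$ has spectral radius at most $\tau<1$. This gives the gap with simple eigenvalue $1$. Boundedness of $\mathcal{L}$ on $\mathcal{B}$ itself uses the Lipschitz bound \eqref{eq:Backward-Lipschitz} together with \eqref{eq:equihoelder}.

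\textbf{Analyticity of $t \mapsto \mathcal{L}_t$.} Multiplication by $e^{i\langle t,f\rangle}$ is a bounded operator on $\mathcal{B}$ because $f$ is Lipschitz and $\mathcal{B}$ is a Banach algebra. The series $\sum_k (i\langle t,f\rangle)^k/k!$ converges in operator norm, so $t\mapsto\mathcal{L}_t$ is analytic.

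\textbf{Uniform bound on products.} The main obstacle is the bound on $\mathcal{L}_{t_n}\cdots\mathcal{L}_{t_1}$. We plan to prove a Lasota–Yorke type inequality
\[
D_\alpha(\mathcal{L}_{t_n}\cdots\mathcal{L}_{t_1} g)\le C\tau^n D_\alpha(g) + C\|g\|_\infty.
\]
To do this, we reuse the coupling construction from the proof of Theorem \ref{theo:Wasserstein}. For two points $\zeta_1,\zeta_2$ we have the coupling $P^n_{\zeta_1,\zeta_2}$ of the measures $(\mathcal{L}^\ast)^n\Delta_{\zeta_i}$. Writing
\[
\mathcal{L}_{t_n}\cdots\mathcal{L}_{t_1}g(\zeta_i)=\int e^{i\sum_k\langle t_k,f\circ T^{k-1}\rangle} g \dd(\mathcal{L}^\ast)^n\Delta_{\zeta_i},
\]
we estimate the difference of the two values along the paired preimage branches. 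The change in $g$ contributes $D_\alpha(g)$ times the expected distance after $n$ steps, and this is at most $C\tau^n d(\zeta_1,\zeta_2)$. The change in the phase contributes at most $\|g\|_\infty \sum_k |t_k| D(f)\,\mathbb{E}[d(\text{preimages at level } k)]$, and this sum is bounded by $\sum_k C\tau^{n-k}|t|\, d(\zeta_1,\zeta_2)$. The unpaired mass $R$ contributes at most $2\|g\|_\infty$ times its weight, which is $O(d(\zeta_1,\zeta_2))$ by Step 2. Together these give the inequality above, since $\|\mathcal{L}_t\|_\infty\le1$. Iterating once more gives the uniform bound $C$.

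With all hypotheses verified, Gouëzel's theorem gives the vector-valued almost sure invariance principle with mean $\int f\dd\mu$ and covariance $\mathbb{V}$. Here $\mathbb{V}$ is the Hessian of $\log$ of the leading eigenvalue of $\mathcal{L}_t$ at $0$, equivalently the usual Green–Kubo sum.
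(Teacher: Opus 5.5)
Your proof is correct, but it checks Gouëzel's hypotheses by a different route from the paper's. Both take the spectral gap (I1) from Theorem \ref{theo:Ruelle}. After that they diverge:
\begin{itemize}
\item \emph{Continuity.} The paper shows that $t\mapsto\mathcal{L}_t$ is (Lipschitz) continuous at $0$ by estimating $\mathcal{L}_t u-\mathcal{L}_0 u$ directly, both in sup norm and in the Hölder coefficient. You get analyticity in one line from the Banach-algebra property of $\|\cdot\|_\alpha$, which is cleaner.
\item \emph{Uniform bound (I2).} The paper obtains the bound on $\mathcal{L}_{t_n}\cdots\mathcal{L}_{t_1}$ by citing Gouëzel's Proposition 2.3, which passes from continuity at $0$ to (I2). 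You prove (I2) directly, via a Lasota--Yorke inequality for the twisted products built from the coupling of Theorem \ref{theo:Wasserstein}. This makes the argument self-contained, and it shows the uniform bound comes from the same contraction mechanism as the spectral gap. The cost is a more delicate coupling estimate.
\end{itemize}

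That estimate needs two points of care.
\begin{itemize}
\item \emph{Choice of coupling.} The phase $\sum_k\langle t_k,f\circ T^{k-1}\rangle$ sees every intermediate level. So an arbitrary coupling of $(\mathcal{L}^\ast)^n\Delta_{\zeta_i}$ is not enough. You must use the Markovian concatenation of block couplings from Step 4, so that $(T^{n-j}z_1,T^{n-j}z_2)$ is itself the coupled pair at level $j$. Inside each block, you also need the Lipschitz bound on inverse branches.
\item \emph{Unpaired mass.} Unpaired mass is created in every block, not only in Step 2. Its total weight is $O(d(\zeta_1,\zeta_2))$ only after summing $\sum_j\tau^j$. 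The simplest fix is to use the bound $|F(z_1)-F(z_2)|\le D_\alpha(g)\,d(z_1,z_2)+\|g\|_\infty |t| D(f)\sum_j d(z_1^{(j)},z_2^{(j)})$ for all pairs, together with the $d^\dagger$-contraction. Then no separate treatment of $R$ is needed.
\end{itemize}

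Finally, you misstate where the exponent comes from. In Gouëzel's Theorem 2.1 the process must be bounded in $L^p$ for some $p>2$, and the error exponent is any $\lambda>p/(4p-4)$. The range $\lambda>1/4$ therefore comes from $f$ being bounded ($p=\infty$), which is how the paper concludes. It does not come from analyticity of $t\mapsto\mathcal{L}_t$. You should state this moment hypothesis explicitly; it is trivially satisfied here because $f$ is continuous on a compact space.
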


\begin{proof} Let  $\tau \in (0,1)$ and $C>0$ as in Theorem \ref{theo:Wasserstein}.  In order to employ Theorem 2.1 in \cite{Gouezel--Almost-Sure-Invariance-Principle--AP2010}, it remains to show that $\mathcal{L}$ has a spectral gap (Condition (I1) in \cite{Gouezel--Almost-Sure-Invariance-Principle--AP2010}) and that the operator $\mathcal{L}_t$ defined below is continuous at $t=0$ (Condition (I2) and Proposition 2.3 in \cite{Gouezel--Almost-Sure-Invariance-Principle--AP2010}).

As the spectral gap property follows from Theorem \ref{theo:Ruelle}, it remains to show the continuity assumption. In order to do so, fix a Lipschitz continuous function $f : \Sigma \times X \to \R^d$,
and define, for $t \in \R^d$,
\[ \mathcal{L}_t (u)(\xi,x) :=  \sum_{T(\eta,y) = (\xi,x)} e^{\varphi(\eta)} e^{i \langle t, f(\eta,y)  \rangle}  u(\eta,y).\]
We now show that $t \mapsto \mathcal{L}_t$ is continuous in $0$. In order to do so, note that, for $u : \Sigma \times X \to \R$ Lipschitz and  $\|t \|_1 = |t_1| + \cdots + |t_d|\leq 1$,
\begin{align*}
| \mathcal{L}_t (u)(\xi,x) -  \mathcal{L}_0(u)(\xi,x)|
& = \left| \sum_{T(\eta,y) = (\xi,x)} e^{\varphi(\eta)}
 u(\eta,y) \left( e^{i \langle t, f(\eta,y)  \rangle} -1 \right) \right| \\
& = \left| \sum_{T^{\ell_0}(\eta,y) = (\xi,x)} e^{\varphi_{\ell_0}(\eta)}
 u(\eta,y)  \sum_{n=1}^\infty \tfrac {(i \langle t, f(\eta,y)  \rangle)^n}{n!}  \right| \\
& \leq   \mathcal{L}_t (|u|)(\xi,x)  \sum_{n=1}^\infty
\tfrac { (\|t\|_1   \| f \|_\infty )^n}{n!} \\
& \leq \left(  {\textstyle \sum_{n\geq 1} \frac{(  \|f \|_\infty)^n}{n!}}  \right) \|u\|_\infty \|t\|_1
\leq \left(e^{  \|f \|_\infty} -1\right) \|u\|_\infty \|t\|_1.
\end{align*}
Hence, $t \to \mathcal{L}_t $ is (Lipschitz) continuous with respect to the sup norm in $0$. In order to obtain an estimate for the Lipschitz coefficients, we now fix $(\xi_i,x_i) \in \Sigma \times X$ such that $d_\Sigma(\xi_1,\xi_2) < a$.
By separating the sum into three terms, one obtains from the above that
\allowdisplaybreaks
\begin{align*}
&
  \left| \mathcal{L}_t (u)(\xi_1,x_1) -   \mathcal{L}_0 (u)(\xi_1,x_1) -  \mathcal{L}_t (u)(\xi_2,x_2) +   \mathcal{L}_0 (u)(\xi_2,x_2) \right|
\\ = &
  \left| \sum_{ T((\eta_j,y_j)) = (\xi_j,x_j)} e^{\varphi(\eta_1)}
  {\sum_{n=1}^\infty \tfrac{(i \langle t, f(\eta_1,y_1)  \rangle)^n}{n!}}
  u(\eta_1,y_1)
  \right.
\\ & \left. \phantom{\sum_{T^{\ell_0}((\eta_j,y_j)) = (\xi_j,x_j)}}
  - e^{\varphi (\eta_2)} {\sum_{n=1}^\infty \tfrac{(i \langle t, f(\eta_2,y_2)  \rangle)^n}{n!}}
  u(\eta_2,y_2) \right|
\\ \leq &  \phantom{+}
  \sum_{(\eta_j,y_j)} \left|1 - e^{\varphi (\eta_2)- \varphi (\eta_1)} \right|
  \left|{\sum_{n=1}^\infty \tfrac{(i \langle t, f(\eta_1,y_1)  \rangle)^n}{n!}}   \right|e^{\varphi(\eta_1)} |u(\eta_1,y_1)|
\\ & +
  \sum_{(\eta_j,y_j)} e^{\varphi (\eta_2)} \left|\sum_{n=1}^\infty \tfrac{  \langle t, f(\eta_1,y_1)  \rangle^n  -  \langle t, f(\eta_2,y_2)  \rangle^n }{n!}  \right|   |u(\eta_1,y_1)|
\\ & +
  \sum_{(\eta_j,y_j)} e^{\varphi (\eta_2)}  \left|{\sum_{n=1}^\infty \tfrac{  \langle t,f(\eta_1,y_1)  \rangle^n}{n!}}   \right|   |u(\eta_1,y_1) - u(\eta_2,y_2)|
\\ \leq &
  \phantom{+} C_\varphi \left(e^{\|f \|_\infty} -1\right)  \|t\|_1 \|u\|_\infty d_\Sigma((\xi_j))^\alpha
\\ & +
  \sum_{(\eta_j,y_j)} e^{\varphi (\eta_2)}   \|t\|_1  | f(\eta_1,y_1) -  f(\eta_2,y_2)|
  \sum_{n=1}^\infty \tfrac{   n  \|f\|_\infty^{n-1}  }{n!}\|u\|_\infty
\\ & +
  \sum_{(\eta_j,y_j)} e^{\varphi (\eta_2)} \left(e^{  \|f \|_\infty} -1\right)  \|t\|_1    D_\alpha(u) d_{\Sigma \times X}((\eta_j,y_j))
\\ \leq &
 e^{  \|f \|_\infty}   \|t\|_1 \left(C_\varphi \|u\|_\infty d_\Sigma((\xi_j))^\alpha +  D_1(f)   d_{\Sigma \times X}((\eta_j,y_j))  +  D_\alpha(u) d_{\Sigma \times X}((\eta_j,y_j))  \right).
\end{align*}
Hence, $t \mapsto \mathcal{L}_t$ is continuous in $0$ with respect to the Lipschitz norm. In particular, (I1) and (I2) are satisfied. Furthermore, as $f$ is continuous, $f \in L^p(\mu)$, for any $p \geq 1$. The theorem now follows from Theorem 2.1 in \cite{Gouezel--Almost-Sure-Invariance-Principle--AP2010}.
\end{proof}

It is worth emphasizing that, in the above theorem, the covariance matrix $\mathbb{V}$ need not be non-singular. In our dynamical setting, however, one has the following well-known and useful dichotomy, according to whether $\mathbb{V}$ is singular or not, whose proof is included for convenience.

\begin{proposition}[Folklore] \label{prop:dichotomy} Under the assumptions of Theorem \ref{theo:vasip}, the following holds.
Either, there exists  $t \in \R^d\setminus \{0\}$ and a Lipschitz continuous function $u: \Sigma \times X \to \R^d$ such that $ \langle t , f -a \rangle =u \circ T -u$ on the support of $\mu$, or
$\mathbb{V}$ is non-singular.
\end{proposition}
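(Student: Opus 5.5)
The plan is to follow the standard coboundary argument: reduce to a scalar observable, express its asymptotic variance through the transfer operator, and then solve the cohomological equation using the spectral gap of Theorem~\ref{theo:Ruelle}.

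\emph{Reduction to the scalar case.} Suppose $\mathbb{V}$ is singular. Then there is $t \in \R^d\setminus\{0\}$ with $\langle \mathbb{V}t,t\rangle = 0$. Set $g := \langle t, f-a\rangle$, which is real-valued, Lipschitz with respect to $d_{\Sigma\times X}$, and satisfies $\int g \dd\mu = 0$. Projecting the almost sure invariance principle of Theorem~\ref{theo:vasip} onto $t$ shows that $\sum_{j<n} g\circ T^j$ satisfies a scalar invariance principle with variance $\sigma_g^2 = \langle \mathbb{V}t,t\rangle$. The standard Green--Kubo expression for this variance (Gouëzel's construction identifies $\mathbb{V}$ through the second derivative of the leading eigenvalue of $\mathcal{L}_t$) is
\[
\sigma_g^2=\lim_n \tfrac1n\int \Big(\textstyle\sum_{j<n} g\circ T^j\Big)^2\dd\mu .
\]

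\emph{Construction of $u$.} Since $\int g\dd\mu = 0$ and $\mathcal{L}$ has the spectral gap of Theorem~\ref{theo:Ruelle} with $\ell_0=1$ (Lipschitz inverse branches), the estimate $\|\mathcal{L}^k g\|_\alpha \le C^\ast\tau^k D_\alpha(g)$ shows that
\[
w := \sum_{k\ge1}\mathcal{L}^k g
\]
converges in $\|\cdot\|_\alpha$, so $w$ is Lipschitz. Put $h := g + w - w\circ T$. Using $\mathcal{L}(v\circ T\cdot u) = v\,\mathcal{L}(u)$ and $\mathcal{L}(\mathbf 1)=\mathbf 1$, we get
\[
\mathcal{L}h = \mathcal{L}g + \mathcal{L}w - w = 0 .
\]
Hence $h\circ T^j$ is a reverse martingale difference sequence with respect to $\mu$: by $T$-invariance of $\mu$ (which follows from $\mathcal{L}^\ast\mu=\mu$) and $\mathcal{L}h=0$, these terms are pairwise orthogonal in $L^2(\mu)$. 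Since $w$ is bounded, $\sum_{j<n} g\circ T^j$ and $\sum_{j<n} h\circ T^j$ differ by at most $2\|w\|_\infty$, which gives
\[
\sigma_g^2=\lim_n \tfrac1n\Big\|\textstyle\sum_{j<n}h\circ T^j\Big\|_2^2=\int h^2\dd\mu .
\]
Therefore $\sigma_g^2=0$ forces $h=0$ $\mu$-a.e. Because $h$ is continuous, $h=0$ on $\operatorname{supp}\mu$. This yields $g = u\circ T - u$ on $\operatorname{supp}\mu$ with $u := w$, and $u$ is Lipschitz. The statement asks for $u$ with values in $\R^d$; a real-valued $u$ suffices, embedded in one coordinate, and I read the identity as $\langle t,f-a\rangle = u\circ T - u$ in that sense.

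\emph{Converse.} If such a coboundary exists, then $\sum_{j<n}\langle t,f-a\rangle\circ T^j$ is bounded on $\operatorname{supp}\mu$. Then $\langle\mathbb{V}t,t\rangle = \lim \frac1n \|\cdot\|_2^2 = 0$, so $\mathbb{V}$ is singular, which shows the two alternatives are genuinely dichotomous.

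\emph{Main obstacle.} The delicate step is justifying that the covariance $\mathbb{V}$ produced by Gouëzel's Theorem~2.1 coincides with the Green--Kubo limit above. I would handle this by recalling that in his framework $\mathbb{V}$ is defined via $\lambda(t) = 1 - \tfrac12\langle\mathbb{V}t,t\rangle + o(|t|^2)$ for the leading eigenvalue of $\mathcal{L}_t$. Expanding $\int \mathcal{L}_{st}^n\mathbf 1\dd\mu = \int e^{is\sum_{j<n} \langle t, f-a\rangle\circ T^j}\dd\mu$ to second order in $s$, combined with the exponential decay of correlations $\left|\int g\cdot g\circ T^k\dd\mu\right| \le \|g\|_\infty C^\ast\tau^k D_\alpha(g)$, which also comes from Theorem~\ref{theo:Ruelle}, then gives the absolutely convergent series $\sigma_g^2=\int g^2\dd\mu + 2\sum_{k\ge1}\int g\cdot g\circ T^k\dd\mu$, and this equals the limit above.
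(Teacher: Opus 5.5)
Your proof is correct, and its analytic core is the same as the paper's. Both arguments use the decomposition $g = h + w\circ T - w$ with $w=\sum_{k\ge1}\mathcal{L}^k g$, which is Lipschitz by Theorem~\ref{theo:Ruelle}. Both then use that $\mathcal{L}h=0$ makes $(h\circ T^j)$ a reverse martingale difference sequence, and conclude $\langle\mathbb{V}t,t\rangle=\int h^2\,\dd\mu$.

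Where you differ is in the logic around this computation. The paper first applies Leonov's theorem to split into two cases:
\begin{itemize}
\item If $\langle t,S_nf-an\rangle$ is bounded in $L^2$, there is an $L^2$ transfer function $u$. Using exactness and $\mathcal{L}^n u\to 0$ in $L^1$, the paper shows $u$ agrees a.e.\ with $\sum_{k\ge1}\mathcal{L}^kg$.
\item If it is unbounded, then $h\ne0$, and the variance computation gives $\langle\mathbb{V}t,t\rangle>0$.
\end{itemize}
You bypass Leonov's theorem and exactness altogether. The single identity $\langle\mathbb{V}t,t\rangle=\int h^2\,\dd\mu$, together with continuity of $h$, yields both alternatives and hands you the explicit Lipschitz coboundary $w$. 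What the paper's detour buys in addition is a regularity statement: every $L^2$ solution of the cohomological equation is a.e.\ equal to this Lipschitz one. The proposition does not need that.

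Your ``main obstacle'' is lighter than you fear. Gouëzel's Theorem~2.1 already asserts that $\frac1n\operatorname{Cov}\bigl(\sum_{j<n}f\circ T^j\bigr)$ converges to the covariance of the limiting Brownian motion. The paper relies on this silently at the same point, so the eigenvalue-expansion argument is unnecessary. If you did use it, note that the paper's $\mathcal{L}_t$ twists by $f$ rather than $f-a$, so you would have to carry the factor $e^{-isn\langle t,a\rangle}$ along.
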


\begin{proof} Assume that $t  \in \R^d\setminus \{0\}$.
By Leonov's theorem (see \cite{Aaronson-Weiss--Remarks-On-The-Tightness-Of-Cocycles--CM2000}), there is $u \in  L^2$ with $g:= \langle t , f -a \rangle = u \circ T - u$ if and only if $(\langle t , S_n(f)  - an \rangle)$ is bounded in $L^2$.

If such $u$ in $L^2$ exists, we may assume without loss of generality that $\int u \dd \mu =0$. By exactness of $T$ and the fact that $\cL_0$ acts as the transfer operator on $L^1$, it follows that
\[
\sum_{k=0}^{n} \cL_0^k(g) = u - \cL_0^n(u) \xrightarrow{n \to \infty} u
\]
in $L^1$. On the other hand, as $\int g \dd \mu =0$, it follows from Theorem \ref{theo:Ruelle} that $\sum_{k=1}^{\infty} \cL_0^k(g)$ exists and is a Lipschitz continuous function. Therefore, $u$ coincides with a Lipschitz continuous function on the support of $\mu$.

On the other hand,  if $S_n(g) = (\langle t , S_n(f)  - an \rangle)$ is unbounded in $L^2$, Leonov's theorem implies that
\[ h :=  g + \sum_{k=1}^{\infty} \cL_0^k(g)  -  \sum_{k=1}^{\infty} \cL_0^k(g) \circ T =
\sum_{k=0}^{\infty} \cL_0^k(g)  -  \sum_{k=1}^{\infty} \cL_0^k(g) \circ T
\]
is non-trivial $L^2$. We now show that $\lim_{n \to \infty}  n^{-1} E (S_ng)^2 = Eh^2$, which then implies that the asymptotic variance $\langle t,\mathbb{V} t\rangle = \int h^2 \dd \mu > 0$. In order to do so, note that $(T^{-n}(\mathcal{B}))$
is a decreasing filtration, and that the conditional expectation has the representation
$ E(\, \cdot \, | T^{-n}(\mathcal{B}) ) =  \cL_0^n(\, \cdot \,) \circ T^n $
in terms of the transfer operator. In particular, as $\cL_0(\psi\circ T) = \cL_0(\mathbf{1}) \psi = \psi$ for any  $\psi$ in $L^1$,
\begin{align*}
E( h\circ T^n | T^{-n-1}(\mathcal{B})) &
=  \cL_0^{n+1}(h\circ T^n) \circ T^{n+1} = \cL_0(h)\circ T^{n+1} =0.
\end{align*}
In other words, $(h \circ T^n)$ is a reverse martingale difference. Therefore, for $m > n \geq 0$,
\begin{align*}
\int (\psi \circ T^m) (h \circ T^n) \dd \mu
& = \int (\psi \circ T^{m-n}) h \dd \mu
 = \int \cL_0((\psi \circ T^{m-n}) h)\circ T \dd \mu \\ &
 = \int \psi \circ T^{m-n} \cL_0(h)\circ T \dd \mu =0.
\end{align*}
This then implies that, for $u := \sum_{k=1}^{\infty} \cL_0^k(g)$,
\begin{align*}
 \int (S_n g)^2 \dd \mu
 & =  \int (S_n h)^2 \dd \mu + 2 \int (S_n h) (u \circ T^n - u )  \dd \mu + \int (u \circ T^n - u )^2  \dd \mu \\
 & = n \int h^2 \dd \mu  - 2  \int (S_n h) u \dd \mu  + O(1).
\end{align*}
However,  $|| \int (S_n h) u \dd \mu \leq \sqrt{n} \|h \|_2  \|u\|_2 $ by the Cauchy-Schwarz inequality. The assertion follows from this.
\end{proof}

\begin{remark} There are several standard applications of Proposition \ref{prop:dichotomy} to  a Lipschitz continuous function $f: \Sigma \times X \to \R$.

First, if $\int f \dd \mu =0$, then either $\|S_n f\|_\infty$ is uniformly bounded, or, as a consequence of the law of the iterated logarithm,  $\limsup_n |S_n f|/\sqrt{n \log \log n} = c$ a.s. for some constant $c > 0$.

Second, if $\mu(U)> 0$ for any open set $U$, then either  $f = u - u \circ T + \int f \dd \mu$ for some Lipschitz continuous function or the Brownian motion in Theorem \ref{theo:vasip} is non-trivial.
In particular, if there are two points $z_1,z_2 \in  \Sigma \times X$ and $n > 0$ with $T^n(z_i)=z_i$, for $i=1,2$, and  $S_n f(z_1) \neq S_n f(z_2)$, then the Brownian motion in Theorem \ref{theo:vasip} is non-trivial.
\end{remark}

\subsection{A vector-valued almost sure invariance principle for semi-flows}
 We now show how to extend Theorem \ref{theo:vasip} to the following class of suspension semi-flows. Assume that $(\Sigma \times X,T)$ is as above, that $h: \Sigma \times X \to (0,\infty)$ is a $d_{\Sigma \times X }$-Lipschitz continuous function   and that
 \[
 Y :=  \{ (\xi,x,s) \in \Sigma \times X \times \R : 0 \leq s \leq  h (\xi,x) \} /_\sim,
 \]
 where $\sim$ stands for the equivalence relation defined by $(\xi,x,h(\xi,x)) \sim (T(\xi,x),0)$.
 Set $h_n := \sum_{i=0}^{n-1} h(T^i(x))$ for $n \geq 1$ and  $h_0 :=0$.  The suspension flow $\psi_t: Y \to Y$ is now  defined as the vertical flow on $Y$, that is, for $t \geq 0$ and $n = \max\{n \geq 0 : h_n(x,\xi) \leq s+ t \}$,
 \[
 \psi_t (\xi,x,s) = (T^n(\xi,x), s -  h_n(x,\xi))
 \]
 Furthermore, note that the notion of a vector-valued invariance principle in discrete time in Definition \ref{def:vasip} carries over  verbatim to semi-flows by substituting $\sum_{j=0}^{n-1} A^\ast_j$ with $\int_0^t A^\ast_s \dd s$.

\begin{theorem}\label{theo:vasip-flow} Assume that $(\Sigma,\theta,\varphi,X,\kappa)$ satisfies the same assumptions as in Theorem \ref{theo:vasip}, that $(Y,\psi_t)$ is the suspension semi-flow with Lipschitz continuous roof function $h: \Sigma \times X \to (0,\infty)$, and that $f : Y \to \R^d$ is a function such that $\overline{f}(\xi,x):= \int_0^{h(\xi,x)} f(\xi,x,s) \dd s$ is Lipschitz continuous with respect to $d_{\Sigma\times X}$.

Then the process  $(f \circ \psi_t) $, distributed according to $ \mu_h = (\int h \dd \mu)^{-1} \mu \times \mathrm{Leb}$, satisfies a vector-valued almost sure invariance principle of error exponent $\lambda$, for any $\lambda > 1/4$, mean $a =    \int f \dd \mu_h $ and some covariance matrix  $\mathbb{V}$.
\end{theorem}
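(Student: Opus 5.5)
The plan is the standard reduction of a flow ASIP to a discrete-time vector ASIP for the pair (integrated observable, roof function), following Melbourne–Nicol and Gouëzel.

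\emph{Discrete-time input.} Consider the $\R^{d+1}$-valued observable $F := (\overline{f}, h)$ on $\Sigma \times X$. Both components are Lipschitz with respect to $d_{\Sigma\times X}$. Theorem \ref{theo:vasip} therefore gives a vector-valued ASIP for $S_n F = (S_n\overline f, h_n)$ under $\mu$, with any error exponent $\lambda>1/4$, mean $(\int \overline f \dd\mu, \int h\dd\mu)$ and some covariance matrix $\tilde{\mathbb{V}}$. We realize it on an extended probability space carrying a $(d+1)$-dimensional Brownian motion $(W^1_t,W^2_t)$.

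\emph{Time change.} For $y=(\xi,x,s)$ and $t\ge 0$, let $N(t)=N(t,y)$ be the lap number, i.e.\ the largest $n$ with $h_n(\xi,x)\le s+t$. Then
\[
\int_0^t f\circ\psi_u(y)\dd u = S_{N(t)}\overline f(\xi,x) + \mathcal{R}(t,y),
\]
where the remainder $\mathcal{R}$ consists of at most two partial integrals over a single fibre, near the start and near the end. Since $h$ is bounded, $N(t)\asymp t$. The same bound handles the remainder under a mild integrability assumption, e.g.\ that $\sup_s|\int_0^s f(\xi,x,u)\dd u|$ is bounded; this is implicit, and in the applications $f$ is bounded.

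The ASIP for $h_n$ gives $h_n = \bar h n + W^2_n + o(n^\lambda)$ with $\bar h=\int h\dd\mu$. Inverting, and using the law of the iterated logarithm for $W^2$,
\[
N(t) = t/\bar h - \bar h^{-1} W^2_{t/\bar h} + O(n^\lambda)
\]
up to error terms of size $O(\sqrt{t\log\log t})$. Substituting this into the first component, $S_{N(t)}\overline f = \overline a N(t) + W^1_{N(t)} + o(t^\lambda)$ with $\overline a = \int\overline f\dd\mu$. Then:
\begin{enumerate}
\item $\overline a N(t) = (\overline a/\bar h)t - (\overline a/\bar h)W^2_{t/\bar h} + o(t^\lambda)$.
\item $W^1_{N(t)} - W^1_{t/\bar h} = O\bigl((t^{1/2}\log t)^{1/2}\bigr)$ by the modulus of continuity of Brownian increments over windows of length $O(\sqrt{t\log\log t})$ (Csörgő–Révész).
\end{enumerate}
The combination $W^1_{t/\bar h} - (\overline a/\bar h)W^2_{t/\bar h}$ is a Brownian motion with covariance $\mathbb{V}$ computed from $\tilde{\mathbb V}$, and $a = \overline a/\bar h = \int f\dd\mu_h$.

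\emph{Main obstacle.} The delicate step is the random time change.
\begin{itemize}
\item The increment error $(t^{1/2}\log t)^{1/2} = t^{1/4}(\log t)^{1/2}$ is only $o(t^\lambda)$ for $\lambda>1/4$. This is exactly why the statement requires $\lambda>1/4$, and it forces a careful use of the Csörgő–Révész increment bound rather than a crude one.
\item One must pass from $\mu$ to $\mu_h$. The ASIP statements are about distributions, so I would work on $\Sigma\times X$ with $\mu$ (the initial height $s$ is bounded and only shifts $t$ by $O(1)$). The almost sure statement transfers to $\mu_h$-a.e.\ $y$ because $\mu_h \ll \mu\times\mathrm{Leb}$ on $Y$.
\item The coupling must be done jointly for all coordinates, which the \emph{vector-valued} ASIP of Theorem \ref{theo:vasip} provides.
\end{itemize}
As a fallback, I would cite the abstract result of Melbourne–Nicol (``A vector-valued almost sure invariance principle'', suspension part), which performs precisely this deduction given the discrete ASIP for $(\overline f,h)$.
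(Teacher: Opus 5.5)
Your proposal is correct and follows the paper's proof essentially step by step. The paper applies the vector ASIP of Theorem \ref{theo:vasip} to $(h,\overline f)$, inverts the lap number using the ASIP for $h_n$ together with the law of the iterated logarithm to get $|N(t)-t/\bar h| = O(\sqrt{t\log\log t})$, and then uses the Cs\"org\H{o}--R\'ev\'esz bound for Brownian increments over windows of that length, which is exactly where $\lambda>1/4$ enters.

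You go beyond the paper in two places. You treat the fibre remainder explicitly, whereas the paper simply asserts it is $O(1)$. You also raise the passage from $\mu$ to $\mu_h$, which the paper does not discuss. However, absolute continuity alone does not settle that passage, because reweighting by $h/\int h\dd\mu$ changes the law of the coupled Brownian motion. The clean fix is to apply Gou\"ezel's theorem directly with the Lipschitz initial density $h/\int h\dd\mu$ in place of $\mathbf{1}$.
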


\begin{proof}
It follows from Theorem \ref{theo:vasip} applied to $F= (h,\overline{f})$,  after eventually enlarging the probability space, that there is  $a \in \R^{d+1}$ and a  $d+1$-dimensional Brownian motion $(B_t)$ such that, for any $\lambda > 1/4$ and $n \in \N$,
\begin{equation}\label{eq:extended-vasip-in-the-proof-of-flows}
 \sum_{k=0}^{n-1} F\circ T^k = a n  + B_n + o(n^{\lambda}).
\end{equation}
We now use the first, strictly increasing  coordinate in order to reparametrize the remaining coordinates.
In order to do so, observe that there exists $D > 1$ such that $ 1/D < h < D$ due to compactness of $\Sigma \times X$. Hence, $h_n \asymp n$ and, in particular, for $t \in [0,\infty)$ and $\zeta \in \Sigma \times X$,
\[ n(t) = n(t,\zeta)  := \min \{ n: h_n \geq t \}\]
is a well defined function from $\Sigma \times X \to \N$. Furthermore, as $h_{n(t)-1} \leq   t \leq h_{n(t)}$, it follows that  $ 0 \leq h_{n(t)} -t \leq D$, which then implies as a consequence of \eqref{eq:extended-vasip-in-the-proof-of-flows}, with $(\ast)^{(j)}$ referring to the projection onto the $j$-th coordinate, that
\begin{equation}\label{eq:difference-n-t}
t =  h_{n(t)} + O(1) = a^{(1)} n(t) + B_{n(t)}^{(1)} + o(t^\lambda).
\end{equation}
The law of the iterated logarithm now implies that
$c_1 = \limsup_t B^{(1)}_t/\sqrt{t \log \log t} > 0$ is almost surely constant. Hence, \eqref{eq:difference-n-t} implies that
\begin{align*}
 & \limsup_{t \to \infty} \frac{|n(t) - t/a^{(1)}|}{ \left(  \max\left\{  {  n(t) \log \log n(t)}, { (t/a^{(1)}) \log \log  (t/a^{(1)}) } \right\} \right)^{1/2} }\\
\leq & \limsup_{t \to \infty} \frac{|n(t) - t/a^{(1)}|}{\sqrt{n(t) \log \log n(t)}} < \frac{2 c_1}{a^{(1)}} \quad  \hbox{ a.s.}
\end{align*}
Now set $a_T:= 2 c_1 \sqrt{ T \log \log T } / a^{(1)} $. The main result by Csörgö and Révész in \cite{Csorgo-Revesz--How-Big-Are-The-Increments-Of-A-Wiener--AP1979}  implies that there are $c^{(j)}_2 > 0$ with
\begin{align*}
 c_2^{(j)} = \limsup_{T \to \infty}
 \sup_{\genfrac{}{}{0pt}{1}{0 \leq s < t \leq T,}{t-s \leq  a_T}}
 \frac{|B^{(j)}_s - B^{(j)}_t|}{ \sqrt{ 2 a_T ( \log (a_T/T) + \log\log T) }} \quad  \hbox{ a.s.}
\end{align*}
for each $j=2, \ldots ,d+ 1$. As the denominator is of order $O(T^\lambda)$, for any $\lambda > 1/4$, it follows that
\[ | B^{(j)}_{n(t)}  - B^{(j)}_{t/a^{(1)}} | = o(t^\lambda)  \quad  \hbox{ a.s.} \]
By combining these arguments, one now obtains, for each $j=2, \ldots ,d+ 1$, that
\begin{align*}
  \sum_{k=0}^{n(t)-1} F^{(j)} \circ T^k & = a^{(j)} n(t)  + B^{(j)}_{n(t)} + o(t^{\lambda})
   =  \frac{a^{(j)}}{a^{(1)}} \left(t - B^{(1)}_{n(t)}\right)
  + B^{(j)}_{n(t)} + o(t^{\lambda}) \\
  & = \frac{a^{(j)}}{a^{(1)}}\left(t - B^{(1)}_{t/a^{(1)}}\right)
  + B^{(j)}_{t/a^{(1)}} + o(t^{\lambda}).
 \end{align*}
The assertion now follows from the simple observation that
\begin{align*}
\int_0^{t} f(\psi_s(\zeta, t_0)) \dd s &
= \int_0^{t -  h(\zeta) + t_0 } f(\psi_s(T (\zeta), 0)) \dd s + O(1)
\\ &
= \sum_{k=1}^{n} \int_{0}^{h(T^k(\zeta))}  f(T^k(\zeta), s) \dd s + O(1)
=
\sum_{k=1}^{n} \overline{f} \circ T^k + O(1)
,
\end{align*}
for $n = n(t -  h(\zeta) + t_0, T(\zeta))$.
\end{proof}

\begin{remark}

There are several almost sure invariance principles or central limit theorems for suspension flows in the literature. For instance, Denker and Philipp (\cite{Denker-Philipp--Approximation-By-Brownian-Motion--ETDS1984}) showed for a suspension of a subshift of finite type with Hölder continuous roof function that observables in $L^{2+ \epsilon}$ satisfy an almost sure invariance principle with error exponent $1/2 - \delta$, which represented the state of the art for around 20 years. Thereafter, Melbourne and Török (\cite{Melbourne-Torok--Statistical-Limit-Theorems-For-Suspension-Flows--IJM2004}) obtained a central limit theorem for a suspension semi-flow with unbounded roof function over a non-uniformly expanding base transformation. This result was later refined by Melbourne and Nicol ( \cite{Melbourne-Nicol--Almost-Sure-Invariance-Principle--CMP2005,Melbourne-Nicol--A-Vector-valued-Almost-Sure--AP2009}) to vector-valued almost sure invariance principles for semi-flows and flows, again with error exponent $1/2 - \delta$.

The novelty of our approach  here is to use the vector-valued almost sure invariance principle by Gou\"ezel and the refined modulus of continuity by Csörgö and Révész, which allows one to obtain the exponent $1/4 + \delta$.
\end{remark}

\subsection{Synchronization}
In this section, we discuss the relation of Theorem \ref{theo:Wasserstein} to synchronization. In our setting, synchronization would mean that for any pair $x,y \in X$ and $\nu$-almost every $\xi \in \Sigma$,
\[ \lim_{ n\to \infty} d_{\Sigma \times X}(T^n(\xi,x),T^n(\xi,y)) =0, \]
where $\nu$ refers to the unique equilibrium state on $\Sigma$ with respect to $\varphi$.
In order to establish a connection of the method of proof of Theorem \ref{theo:Wasserstein}, we now have to make the following additional assumption in the situation of Definition \ref{def:LEEA}.

\begin{definition} \label{def:LEEA-Plus} We say that $(\Sigma,\theta,\varphi,X,\kappa)$ always has a \emph{contracting inverse branch (ahcib)}
if there exists $m_0$ such that, for all  $\xi  \in \Sigma$ and $x,y \in X$, there exist  $\xi^\ast  \in \Sigma$ with $\theta^{m_0}(\xi^\ast) = \xi$  with
$d_X\left(\kappa_{m_0}(\xi^\ast)^{-1}(x), \kappa_{m_0}({\xi^\ast})^{-1}(y) \right)  <  a$.
\end{definition}

However, note that backward contraction in average as in Def. \ref{def:LEEA} implies that  $T$ should show a certain distance expansion in $X$  and that, in particular, one should expect that the system does not synchronize. On the other hand, as we will see below, the
natural extension of $T$ has this property.
We now recall the construction of the
natural extension $(\Sigma^\dagger,\theta^\dagger,\nu^\dagger)$ of $(\Sigma,\theta,\nu)$. Set
\begin{align*}
\Sigma^\dagger :=
\{
(\xi_n)_{n \leq 0} : \theta(\xi_n) = \xi_{n+1} \forall n <  0 )
\}, \\
\theta^\dagger: \Sigma^\dagger \to \Sigma^\dagger,
(\ldots, \xi_{-1},\; \xi_0) \mapsto (\ldots, \xi_0, \theta(\xi_0)).
\end{align*}
As can easily be seen, $(\ldots, \xi_{-1}, \xi_0) \mapsto (\ldots, \xi_{-1})$ is the inverse of $\theta^\dagger$, and, therefore, $\theta^\dagger$ is invertible. So it remains to recall the construction of the $\theta^\dagger$-invariant measure $\nu^\dagger$. In order to do so, for $\xi \in \Sigma$ and $n \in \N$, set
\[ U_n(\xi):= \theta_\xi^{-n}(B_a (\theta^n(\xi)), \]
where $a$ is the constant given by the expansion in the sense of Ruelle of $\theta$, and $\theta_\xi^{-n}$ refers to the inverse branch associated with $\xi$ and $n$.
For $A \subset U_n(\xi)$ measurable, define
 \[
 \nu^\dagger\left(\left\{ (\xi_k)_{k \leq 0} \in \Sigma^\ast :
 \xi_{-n+k} \in \theta^{k}(A) \; \forall k = 0, \ldots, n \right\}\right) := \nu(A).
 \]
It now follows from $\theta$-invariance of $\nu$ that $\nu^\dagger$ is well defined, that $\nu^\dagger$  extends to a unique $\theta^\dagger$-invariant measure and that $\theta^\dagger$ is an automorphism of $(\Sigma^\dagger, \nu^\dagger)$
(see, e.g., \cite{Aaronson--An-Introduction-To-Infinite--1997}).

For the skew product $(\Sigma \times X, T, \mu)$, the same construction applies. However, due to the structure of $T$, it suffices to consider
\[
T^\dagger:  \Sigma^\dagger \times X \to \Sigma^\dagger \times X, \; ((\xi_n),x) \mapsto (\theta^\dagger(\xi_n),\kappa(\xi_0)(x)),
\]
with inverse  $S:= (T^\dagger)^{-1}$ given by
$
S : 
((\xi_n)_{n\leq 0},x) \mapsto ((\xi_n)_{n \leq - 1},(\kappa(\xi_{-1}))^{-1}(x))
$.

\begin{theorem} \label{theo:synchronization} Assume that $(\Sigma, \theta)$ is Ruelle expanding, that $\varphi$ is $\alpha$-Hölder continuous for some $\alpha \leq 1$, that $\varphi$ is a normalized potential and that $\kappa: \Sigma \to \mathrm{Hom}(X)$ is a map such that $T$ is continuous and that $(\Sigma,\theta,\varphi,X,\kappa)$ is backward contracting in average and always has a contracting inverse branch.
Then, there exists $\tau < 1$ such for that any $x,y \in X$ there exists $N : \Sigma^\dagger \to \N$  such that for $\nu^\dagger$-almost every $(\xi_k) \in \Sigma^\ast$
\[
d_X( \kappa_n(\xi_{-n})^{-1}(x), \kappa_n(\xi_{-n})^{-1}(y) ) \leq \tau^n d_X(x,y)
\]
for all $n > N((\xi_k))$. In particular, the inverse $S$ of the natural extension of $(\Sigma \times X,T,\mu)$, where $\mu$ is given by Theorem  \ref{theo:Ruelle}, is synchronizing.
\end{theorem}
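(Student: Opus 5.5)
We plan to turn the backward contraction in average into an almost sure statement along the backward orbit of the natural extension. Fix $x,y\in X$. The object we control is, for $\xi\in\Sigma$ and $n\in\N$, the averaged quantity
\[
I_n(\xi) := \sum_{\theta^{n}\xi^\ast = \xi} e^{\varphi_{n}(\xi^\ast)}\, d_X\bigl(\kappa_{n}(\xi^\ast)^{-1}(x),\kappa_{n}(\xi^\ast)^{-1}(y)\bigr)^\alpha .
\]
This is exactly $\int d_X^\alpha \,\dd Q$ for the diagonal-in-$\Sigma$ coupling of $(\mathcal{L}^\ast)^n(\Delta_{(\xi,x)})$ and $(\mathcal{L}^\ast)^n(\Delta_{(\xi,y)})$. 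Since $\nu^\dagger$ disintegrates over $\xi_0$ with conditional weights $e^{\varphi_n(\xi_{-n})}$ on the $n$-th preimages, we have
\[
\int d_X\bigl(\kappa_n(\xi_{-n})^{-1}x,\kappa_n(\xi_{-n})^{-1}y\bigr)^\alpha \dd\nu^\dagger = \int I_n \,\dd\nu .
\]

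The first step is to show $\sup_\xi I_n(\xi) \le C \tau_0^n$ for some $\tau_0<1$. To do this we repeat Steps 1--3 of the proof of Theorem \ref{theo:Wasserstein}, now with both points in the same fibre ($\xi_1=\xi_2$). Step 2 (via Step 1 and \eqref{eq:measure_of_bad_part}) handles pairs with $d_X<a$: the good part $\mathcal{D}_k$ contracts by $(\rho^\ast)^k$, while the bad part $\mathcal{E}_k$ has small weight. For pairs at distance $\ge a$ we use (ahcib) in place of (cp): there is a preimage of weight at least $C^\ast_\varphi$ bringing the pair, still on the same base point, below distance $a$. This gives a contraction by a factor $1-C^\ast_\varphi/2$ of the truncated distance $\min\{1,2C^\ast d_X^\alpha\}$, exactly as in Step 3. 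Iterating via the Markov property of the coupling, as in Step 4, yields $I_{k\ell_0}(\xi) \le C\tau_1^k\, d_X(x,y)^\alpha$. For intermediate times we bound $I_n$ using the fact that $d_X\le 1$, together with normalization and splitting $n=k\ell_0+r$.

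The second step is Borel--Cantelli. By Markov's inequality,
\[
\nu^\dagger\bigl(d_X(\kappa_n(\xi_{-n})^{-1}x,\kappa_n(\xi_{-n})^{-1}y) > \tau^n d_X(x,y)\bigr) \le \frac{C\tau_1^{n/\ell_0} d_X(x,y)^\alpha}{\tau^{\alpha n} d_X(x,y)^\alpha}.
\]
Choosing $\tau\in(\tau_1^{1/(\alpha\ell_0)},1)$ makes this summable, so the inequality fails only finitely often almost surely. This produces the random time $N$. Synchronization of $S$ follows because $S^n((\xi_k),x)$ and $S^n((\xi_k),y)$ share the same $\Sigma^\dagger$-coordinate and their $X$-coordinates are exactly these backward compositions. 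Since $x,y$ are fixed, no uniformity over pairs is needed.

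The main obstacle is the first step: the (lebca) estimate is written for pairs of distinct base points, whereas we need the same inverse branch $\xi_1^\ast=\xi_2^\ast$. We will specialize \eqref{eq:lecba} to $\xi_1=\xi_2$, where the $d_\Sigma$ terms vanish, and check that Steps 1--2 only ever use it for pairs of this form along the induction.
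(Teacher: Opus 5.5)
Your proposal is correct and follows the paper's proof essentially step for step. Like you, the paper reruns Steps 1--3 of Theorem \ref{theo:Wasserstein} with the diagonal-in-$\Sigma$ coupling (which is already a probability measure, so no subcoupling is needed), replaces (cp) by (ahcib), and iterates to obtain exponential decay of $\sum_{\theta^n\xi^\ast=\xi}e^{\varphi_n(\xi^\ast)}d_X(\kappa_n(\xi^\ast)^{-1}x,\kappa_n(\xi^\ast)^{-1}y)^\alpha$; it then transfers this to $\nu^\dagger$ via conformality of $\nu$ and concludes with Markov's inequality and Borel--Cantelli. Your observation that, for fixed $x\neq y$, the factor $d_X(x,y)^\alpha$ is irrelevant to summability is what makes the crude bound $d_X\le 1$ at times that are not multiples of $\ell_0$ harmless; the paper does not address those intermediate times explicitly.
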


\begin{proof} The proof relies on the following simplified construction in comparison to the proof of Theorem \ref{theo:Wasserstein} as it suffices to consider pairs
$(\xi,x_i) \in \Sigma \times X$ for $i=1,2$.
For the construction in Step 2, it is therefore not necessary to add a subcoupling as
\[ P^{nk_0}_{\xi,x_1,x_2}   := \sum_{\theta^{kn_0}(\xi^\ast) = \xi} e^{\varphi_{kn_0}(\xi^\ast) }
\Delta_{(\xi^\ast,\kappa_{kn_0}(\xi^\ast)^{-1}(x_1)  ) , (\xi^\ast,\kappa_{kn_0}(\xi^\ast)^{-1}(x_2)  )}.
\]
already is a probability measure. Furthermore, the estimates on $\int d^\dagger \dd P$ also hold in this setting. For the coupling in Step 3, it is straightforward to employ (ahcib) in order to see that $P^{\ell k_0}_{\xi,x_1,x_2}$ is a coupling with the appropriate contraction. So it remains to observe that
\[
 P^{k+ \ell}_{\xi,x_1,x_2}
 =  \int  P^{k}_{\xi^\ast,x_1^\ast,x_2^\ast} \dd  P^{\ell}_{\xi,x_1,x_2}( (\xi^\ast,x_1^\ast), (\xi^\ast,x_2^\ast))
\]
It hence follows as in the proof of Theorem \ref{theo:Wasserstein} that for $C> 0$ and $\tau > 0$ as in there and all $n \in \N$,
\begin{equation} \label{eq:decay-simplified}
\sum_{\theta^{n}(\xi^\ast) = \xi} e^{\varphi_{n}(\xi^\ast) }
d_X(\kappa_{n}(\xi^\ast)^{-1}(x_1) ,\kappa_{n}(\xi^\ast)^{-1}(x_2)) ^\alpha = \int d \dd  P^{n}_{\xi,x_1,x_2} \leq C \tau^n d_X(x_1,x_2)^\alpha.
\end{equation}
In terms of the natural extension, this leads to the following. For $\eta \in \Sigma$, $n \in \N$ and $A \subset B_a(\eta)$ measurable, note that it follows from the constructions of $S$ and $\nu^\dagger$ and the conformality of $\nu$ that
\begin{align*}
S^n(\{(\xi_i): \xi_0 \in A  \}) = \bigcup_{\theta^n(\eta^\ast) = \eta} \{(\xi_i): \xi_{0} \in  U_n(\eta^\ast) \cap \theta^{-n}(A) \},\\
\nu^\dagger (\{(\xi_i): \xi_{0} \in  U_n(\eta^\ast) \cap \theta^{-n}(A) \}) =
 \int_{A} e^{\varphi_n \circ \theta_{\eta^\ast}^{-n}} d\nu
.
\end{align*}
By combining these observations, it follows from \eqref{eq:decay-simplified} that,
with $\pi_X(\xi,x):=x$,
\begin{align*}
& \int d_X( \kappa_n(\xi_{-n})^{-1}(x) \kappa_n(\xi_{-n})^{-1}(y) ) \dd \nu^\dagger
\\ =  &
\int  d_X\left( \pi_X\circ S^n ((\xi_i),x), \pi_X\circ S^n ((\xi_i),y) \right)  \dd \nu^\dagger
\leq C \tau^n d_X(x,y).
\end{align*}
Set $\Omega_n := \left\{  (\xi_i) : d_X( \kappa_n(\xi_{-n})^{-1}(x), \kappa_n(\xi_{-n})^{-1}(y))^\alpha \geq \tau^{n/2} d_X(x,y)^\alpha \right\}$. It follows from the estimate that
\begin{align*}
\tau^{n/2} \nu^\dagger\left( \Omega_n \right)
\leq \int d_X( \kappa_n(\xi_{-n})^{-1}(x), \kappa_n(\xi_{-n})^{-1}(y) )^\alpha \dd \nu^\dagger
\leq  C \tau^n d_X(x,y)^\alpha,
\end{align*}
and therefore, $\nu^\dagger \left( \Omega_n \right) \leq C  \tau^{n/2}$. It hence follows from the Borel--Cantelli Lemma that almost every $(\xi_i)$ is in at most a finite number of $\Omega_n$'s.
\end{proof}

We now relate the above result to the synchronizing results by Matias (Th 5.1 in \cite{Matias--Markovian-Random-Iterations-Of--ETDS-2022}) and Gelfert \& Salcedo (\cite{Gelfert-Salcedo--Synchronization-Rates-And-Limit--MZ-2024}). Matias considers a Markov process $(Y_n)$ with values in a Polish space $Y$, a map $\kappa: Y \to \mathrm{Homeo}(\mathbb{S}^1)$ and the cocycle defined by $\kappa_n := \kappa(Y_{n})  \circ \cdots \kappa(Y_{1})$.
If the process $(Y_n,\kappa_n)$ has the Feller property and there is no common $\kappa(y)$-invariant probability on $\mathbb{S}^1$, it then follows that a local synchronization result with an exponential rate holds. For the proof, Matias makes use of a result by Malicet (Th. F in \cite{Malicet--Random-Walks-On-Rm--CMP-2017}).

Gelfert \& Salcedo, on the other hand, assume that the $(Y_n)$ are independent and identically distributed but replace $\mathbb{S}^1$ with an arbitrary compact metric space. Their main assumptions are proximality (P) and local contraction (LC), defined as follows.
\begin{itemize}
 \item[(P)] For any pair $x,y \in X$,  $\exists \omega = (\omega_i: \omega_i \in Y) $ such that $\lim_n d(\kappa_n(\omega)(x), \kappa_n(\omega)(y)) =0$.
 \item[(LC)] There exists $q < 1$ such that, for any $x \in X$ and for almost every $\omega = (\omega_i: \omega_i \in Y)$,  there exists $\epsilon > 0$ such that for all $n \in \N$,
 \[ \mathrm{diam}(\kappa_n(\omega)(B_\epsilon(x))) \leq q^n. \]
\end{itemize}
Under these assumptions, Gelfert \& Salcedo show that, for almost every $\omega$, $x,y \in X$ and $n \in \N$ (Th. 14 and 1.4 in \cite{Gelfert-Salcedo--Synchronization-Rates-And-Limit--MZ-2024}),
\[
d(\kappa_n(\omega)(x), \kappa_n(\omega)(y)) \leq C_{\omega,x,y} q^n.
\]
In order to compare these results with Theorem \ref{theo:synchronization}, assume that $\Sigma$ is a  topologically mixing subshift of finite type equipped with a normalized H\"older potential $\varphi$ and its associated equilibrium state.  As the natural extension of $\Sigma$ is the two-sided shift, one obtains synchronization of
$ \kappa((\theta^\dagger)^{-n}(\xi))^{-1}  \circ  \cdots \circ \kappa(\xi)^{-1} $ with respect to the stationary process $((\theta^\dagger)^{-n}(\xi))$. Note that, in our setting $\kappa$ may depend on infinitely many symbols.

It is worth pointing out the differences between the conditions in \cite{Gelfert-Salcedo--Synchronization-Rates-And-Limit--MZ-2024} and the ones here. First of all, observe that proximality and local contraction are asymptotic properties whereas the existence of contracting inverse branches (ahcib) and locally eventually backward contraction in average (lebca) are conditions with a finite horizon. Moreover, it is not hard to show that in the setting of subshifts of finite type, (P) implies (ahcib) and (LC) implies (lebca).

\section{Random walks on hyperbolic groups with stationary increments} \label{sec:random-walks}
This section is devoted to the application of the above results to random walks in hyperbolic groups with not necessarily independent increments whose definition is as follows.

Assume that $\mathcal{A}$ is a finite set, that $G$ is a discrete group and that $\gamma: \mathcal{A} \to G$ is a map and that $(\Sigma,\theta)$ is a transitive subshift of finite type with alphabet $\mathcal{A}$. With these objects at hand, define
\begin{equation}
 \label{eq:random-walk-with-stationary-increments}
S: \Sigma \times G \to  \Sigma \times G, (\xi,g) \mapsto (\theta\xi, g \gamma(\xi) ),
\end{equation}
where, with a certain abuse of notation, $\gamma: \Sigma \to G$, $(x_0x_1 \ldots) \mapsto \gamma(x_0)$. Furthermore, we assume that $\varphi : \Sigma \to \R$ is a Hölder continuous and normalized potential with respect to the transfer operator. As $\theta$ is transitive, it follows from Ruelle's operator theorem that there is a unique invariant and ergodic probability measure $\nu$ such that $d\nu/d\nu\circ \theta = \exp \varphi$. In particular, this measure attributes to each passage from $g \in G$ to $h \in G$ in time $n$ a weight
\[
P_n(g,h) := \nu\left(\left\{  \xi \in \Sigma : g \gamma(\xi) \cdots \gamma(\theta^{n-1}(\xi) = h \right\}\right).
\]
Note that the time evolution in the second coordinate of the skew product shares many similarities with a random walk on \( G \). However, since \( P_{m+n}(g,h) \) may not coincide with \( \sum_z P_{m}(g,z) P_{n}(z,h) \), the increments of this stochastic process are, in general, not independent. For this reason, it may be referred to as a  {random walk with stationary increments}.
However, in order to apply Theorems~\ref{theo:Wasserstein} and~\ref{theo:vasip}, one needs to consider a  compactification of \( G \) and analyze the interplay between the skew product \( S \) on \(\Sigma \times G\) and a skew product \( T \) on \( \Sigma \times \partial G\) (cf. Def. \ref{def:boundary-map}), using Martin boundary techniques.

\subsection{The boundary of a hyperbolic group}
We recall the definition of a \emph{word-hyperbolic group}. Assume that $G$ is finitely generated  and that $\mathcal{S}$ is a fixed finite set such that $\mathcal{S}^{-1} = \mathcal{S}$ and that $\mathcal{S}$ generates $G$ as a semigroup. The word metric is then defined by
\[ d_\mathrm{w}(g,h) : =  \min \{ n \in \N : \exists s_1, \ldots, s_n \in \mathcal{S} \hbox{ with } g s_1  \cdots s_n =h \},\]
and its associated \emph{Gromov product}, for $x,y,\mathbf{o} \in G$, by
\begin{equation}\label{eq:gromov product} (x \cdot y)_\mathbf{o} := \frac{1}{2} \left( d_G(x,\mathbf{o} ) + d_G(y,\mathbf{o} ) - d_G(x,y)  \right). \end{equation}
Being hyperbolic with respect to the word metric, or \emph{word-hyperbolic} for short, means that there exists $\delta > 0$ such that
$ (x \cdot z)_\mathbf{o} \geq \min\left\{(x \cdot y)_\mathbf{o},(y \cdot z)_\mathbf{o} \right\} -\delta $
for all $x,y,z,\mathbf{o} \in G$.

Each hyperbolic group comes with a geometric compactification whose properties we recall now (see, e.g., \cite{Ghys-DelaHarpe--Sur-Les-Groupes-Hyperboliques--1990}).  A sequence $(g_n)$ in $G$ is said to \emph{converge at infinity} if  $\lim_{m,n \to \infty} (g_n \cdot g_m)_\mathbf{o} = \infty$ for  some $\mathbf{o} \in G$, and we say that  $(g_n)$ and $(h_n)$ converge to the same limit at infinity if  $\lim_{n \to \infty} (g_n,h_n)_\mathbf{o} = \infty$  for  some $\mathbf{o}\in G$. The \emph{Gromov boundary} $\partial G$ of $G$ is defined as the set of equivalence classes of this equivalence relation.

In order to define a metric on $\partial G$, for $x,y \in \partial G$, one extends the Gromov product to $\overline{G}$ through
\[ (x \cdot y)_\mathbf{o} := \sup\left\{ \liminf_{m,n \to \infty} (g_m \cdot h_n)_\mathbf{o} \;:\; g_n \to x, h_m\to y \right\}.\]
For $\lambda  \in (\sqrt[2\delta]{1/2},1)$ and $r(x,y) := \lambda^{(x \cdot y) }$, it then turns out that
\begin{equation} \label{eq:parameter_for_metric_on_Gromov_boundary}
d_{\partial G}(x,y) := \inf \left\{  \sum_{k=1}^{n-1} r(x_k,x_{k+1}) : n \in \N, x_k \in \partial G, x_1 = x, x_{n}=y \right\}
\end{equation}
defines a metric, the  \emph{visual metric} on $\partial G$. Moreover, this metric is up to an error term a function of the Gromov product as there exists $C\in (0,1]$ with $ C r(x,y)   \leq d_{\partial G}(x,y)\leq r(x,y)$ for all $x,y \in \partial G$.

\begin{definition} \label{def:boundary-map} Assume that $G$ is a word-hyperbolic group $G$ and that $(\Sigma,\theta,\varphi,\gamma,S)$ are as in \eqref{eq:random-walk-with-stationary-increments}. The boundary map is defined by
$
T:  \Sigma \times \partial G \to  \Sigma \times \partial G,  \;  (\xi,x) \mapsto (\theta \xi, \gamma(\xi)^{-1}x)$.
\end{definition}

A further relevant object is the \emph{Busemann function}, defined by
\begin{equation} \label{def:Busemann-geral}
\beta_{\mathbf{o}}(g,x) :=   d_w(\mathbf{o},g) - 2 (g \cdot x)_\mathbf{o},
\end{equation}
for $g \in G$  and $x \in \partial G$, which allows us to control the metric expansion of the action of $G$ on $\partial G$ as follows. Note that $G$ acts on $\partial G$ through $gx :=  \lim g g_n$, where $(g_n)$ is a sequence with $g_n \to x$ at infinity. Due to the interplay of the Gromov product and Busemann function (see \cite[Prop. 3.3.3 (g)]{Das-Simmons-Urbanski--Geometry-And-Dynamics-In--MSM2017}), it follows that
\begin{equation} \label{eq:expansion_for_Gromov_boundary}
(x \cdot y)_{\id} = (g^{-1}x \cdot g^{-1}y)_{\id}  + \frac{1}{2} \left( \beta_{\id}(g,x) + \beta_{\id}(g,y) \right) + \mathcal{O}(1),
\end{equation}
with $\mathcal{O}(1)$ only depending on the hyperbolicity constant $\delta$. With respect to the boundary map, these estimates imply the following. Assume that $\xi_i  \in \Sigma $ ($i=1,2$) belong to the same cylinder of length $n$ and that $x_i \in \partial G$  ($i=1,2$). Then, with $\gamma_n(\xi):= \gamma(\xi) \cdots \gamma(\theta^{n-1}\xi)$, it follows that
$\gamma_n(\xi_1) =  \gamma_n(\xi_2)$. In particular,  \eqref{eq:parameter_for_metric_on_Gromov_boundary} and  \eqref{eq:expansion_for_Gromov_boundary}  imply for $(\bar{\xi}_i,\bar{x}_i) := T^n({\xi}_i,{x}_i)$ that
\begin{align}
\label{eq:expansion in the second coordinate}
d_{\partial G}(\bar{x}_1,\bar{x}_2)
\asymp
\lambda^{- \frac12  (\beta_{\id}(\gamma_n(\xi_1),x_1) +  \beta_{\id}(\gamma_n(\xi_2),x_2))}
d_{\partial G}({x}_1,{x}_2).
\end{align}

We now recall the notion of strong hyperbolicity  (see \cite{Nica-Spakula--Strong-Hyperbolicity--GGD-2016}).  A metric $d$ on $G$ is said to be \emph{strongly hyperbolic} if there exists $\epsilon>0$ such that, for all $g,h,j\in G$,
  \[
e^{-\epsilon(g \cdot j)_{\id}}
\leq
e^{-\epsilon(g\cdot h)_{\id}}
+
e^{-\epsilon(h \cdot j)_{\id}}.
\]
Under this stronger hypothesis, the Gromov product admits a well-defined extension to the boundary. More precisely, if $x,y\in\partial G$ and $(g_n)$ and $(h_n)$ are sequences converging to $x$ and $y$, respectively, then
\[
(x\cdot y)_{\id}
=
\lim_{n\to\infty}(g_n\cdot h_n)_{\id},
\]
and the limit is independent of the choice of sequences. Consequently, the visual metric on $\partial G$ takes the particularly simple form
$
d_{\partial G}(x_1,x_2)
=
e^{-\epsilon(x_1\cdot x_2)_{\id}}
$.
Furthermore, the Busemann function in \eqref{def:Busemann-geral} with respect to a strongly hyperbolic metric can be written as
\[
\beta_{\id}(g,x) = \lim_{n \to \infty}  d(g,g_n) - d(\id,g_n),
\]
where $(g_n)$ is any sequence converging at infinity to $x$. This  allows us to show that $x \mapsto \beta_{\id}(g,x)$ is Hölder continuous and that $\beta$ satisfies the cocycle identity
\[ \beta_{\id}(gh,x) = \beta_{\id}(g,x) + \beta_{\id}(h,g^{-1}x) ,\] for any pair $g,h \in G$ and $x \in \partial G$.
Moreover, as can easily  be seen, this stronger assumption implies that \eqref{eq:expansion_for_Gromov_boundary} holds without the $\mathcal{O}(1)$-error term.   Hence, as
\[
\left(S_n\beta_{\id}(\gamma(\cdot),\cdot)\right) (\xi,x) := \sum_{k=0}^{n-1} \beta_{\id} (\gamma(\theta^k \xi),\gamma_k(\xi)(x))    = \beta_{\id}(\gamma_n(\xi),x),
\]
the estimate \eqref{eq:expansion in the second coordinate} simplifies to
\begin{align}
\label{eq:expansion in the second coordinate - strongly hyperbolic}
d_{\partial G}(\bar{x}_1,\bar{x}_2)
=
e^{ \frac{\epsilon}{2}  ( S_n\beta_{\id}  (\xi_1,x_1) + S_n \beta_{\id}( (\xi_2),x_2))}
d_{\partial G}({x}_1,{x}_2),
\end{align}
which reveals that for strongly hyperbolic groups, $\exp \beta$ plays the role of the derivative, and that $\lim_n  S_n\beta_{\id}/n $ is the Lyapunov exponent of the action of the boundary map on $\partial G$.
However, at this point we would like to put emphasis on the fact that the word metric in general is not strongly hyperbolic.

%

\subsection{Positive drift}
In this section, we show that
$\lim_n d_\mathrm{w} (\id,\gamma_n(\xi))/n > 0$ and $\lim_n \beta_{\id}(\gamma_n(\xi),x)/n > 0$ almost surely under  sufficiently flexible conditions. From now on, we assume that
\begin{enumerate}
 \item[(H$_1$)]  $(\Sigma,\theta)$ is a topologically transitive subshift of finite type,
 \item[(H$_2$)] $\varphi: \Sigma \to \R$ is a Hölder continuous and normalized potential,
 \item[(H$_3$)]  $G$ is a word-hyperbolic, non-elementary group,
 \item[(H$_4$)]  $\gamma: \mathcal{A} \to G$ is a map such that the skew product $S$ is topologically transitive.
\end{enumerate}
The \emph{Green operator} is formally defined by $\G_r := \sum_{n \geq 0} r^n \cL_S^n$, for $r > 0$, with $\cL_S$ referring to the Ruelle operator associated with $S$ and the potential $\varphi$.
In order to obtain an action of $\mathbb{G}_r$, one now proceeds as follows. For $A \subset G$, set $\mathbb{X}_A:= \mathbf{1}_{\Sigma \times A}$.
Then $\mathbb{G}_r(\X_{\id})(\xi,{\id})$ is a power series in $r$ with radius of convergence $R$. It is then relatively easy to see that $\G_r(f)$ is well defined, provided that $f$ is a Lipschitz continuous function with compact support, and that $r < R$ (for details, see \cite{Bispo-Stadlbauer--The-Martin-Boundary-Of--IJM2023}). Furthermore, as shown in \cite{Stadlbauer--An-Extension-Of-Kestens--AM2013}, $R> 1$ as $G$ is not amenable. In particular, it follows from non-amenability that $S$ cannot be conservative and ergodic, which then  implies that $\G_r(f)$ is well defined for $r \leq R$ (see \cite{Jaerisch--Recurrence-And-Pressure-For--ETDS2016,Bispo-Stadlbauer--The-Martin-Boundary-Of--IJM2023}). Moreover, \cite{Bispo-Stadlbauer--The-Martin-Boundary-Of--IJM2023} made use of the following hypothesis on symmetry.
\begin{enumerate}
 \item[(S)] There exists $C > 0$ such that $ \G_R(\mathbb{X}_g)(\xi,\mathrm{id}) =  C^{\pm 1} \G_R(\mathbb{X}_{\id})(\xi,g)$ for any $g \in G$ and  $\xi \in \Sigma$,
 .
\end{enumerate}

Recall that $(\gamma_n (\xi))$, for $\gamma_n (\xi) := \gamma(\xi) \gamma(\theta \xi)  \cdots \gamma(\theta^{n-1}\xi)$. An argument for Martin boundaries provides us with almost sure convergence of $(\gamma_n (\xi))$ (see \cite{Shwartz--Thermodynamic-Formalism-For-Transient--CMP2019}) and, as $G$ is hyperbolic, the limit is an element of $\partial G$. The next result now provides us with an upper bound for the minimal distance of $\{\gamma_m (\xi): m \in \N\}$ to the point on the geodesic half ray towards the limit point at distance $n$. We would like to remark that the result relies on  two estimates by Gouëzel (\cite[Lemmas 2.5 and 2.6]{Gouezel--Local-Limit-Theorem-For--JAMS2014}) for random walks, which were extended in \cite{Bispo-Stadlbauer--The-Martin-Boundary-Of--IJM2023} to the setting  here.

Moreover, since we only need to consider the Martin boundary for $r=1$, we use the following observation concerning the results in  \cite{Gouezel--Local-Limit-Theorem-For--JAMS2014,Bispo-Stadlbauer--The-Martin-Boundary-Of--IJM2023}. In there, condition (S)  is used only in order to obtain uniform estimates for $1 \leq r \leq R$ through Lemma 2.5 in \cite{Gouezel--Local-Limit-Theorem-For--JAMS2014} and Lemma 4.4 in \cite{Bispo-Stadlbauer--The-Martin-Boundary-Of--IJM2023}, respectively. However, by analyticity of $\G_r$ for $1 \leq r <  R$, these estimates are trivial for $r=1$ and, therefore, the results in  \cite{Bispo-Stadlbauer--The-Martin-Boundary-Of--IJM2023} which are relevant  here remain valid without requiring condition (S).

\begin{proposition}\label{prop:convergence-to-the-boundary}
Assume that (H$_1$ - H$_4$) hold. Then, the following hold.
\begin{enumerate}
 \item For $\nu$-almost every $\xi \in \Sigma$, the sequence  $(\gamma_n(\xi))$ converges  to  $\pi(\xi):= \lim_n \gamma_n(\xi) \in \partial G$.
 \item There exists $C > 0$ such that for $\nu$-almost every $\xi \in \Sigma$, any geodesic ray $s: [0, \infty ) \to G$ from $\mathrm{id}$ to $\pi(\xi)$, and for all but finitely many $n \in \N$,
\[  \min_{m \in \N} d_G(s(n), \gamma_m(\xi)) <   C{\log \log n}.\]
\item
There exists $C > 0$ such that for $\nu$-almost every $\xi \in \Sigma$, any geodesic ray $s: [0, \infty ) \to G$ from $\mathrm{id}$ to $\pi(\xi)$, and for all but finitely many $n \in \N$, there is $m_n \in \N$ such that
\[d_G(\gamma_n(\xi), s(m_n)) < C \log m_n.\]
\end{enumerate}
\end{proposition}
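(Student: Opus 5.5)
The plan is to follow Gouëzel's strategy for random walks with independent increments (as adapted in Maher--Tiozzo), replacing the Markov/independence structure by the Green operator $\G_1$ of the skew product $S$ and the two Ancona-type estimates of \cite[Lemmas 2.5, 2.6]{Gouezel--Local-Limit-Theorem-For--JAMS2014} in the form extended in \cite{Bispo-Stadlbauer--The-Martin-Boundary-Of--IJM2023}. As remarked before the statement, these are available at $r=1$ without (S). The main tool is a deviation estimate. There are $C,\epsilon>0$ such that for any $g\in G$, any geodesic from $\id$ to $g$, and any point $w$ on it, the $\G_1$-weight of trajectories of $S$ starting in $\Sigma\times\{\id\}$, passing through $\Sigma\times\{g\}$, and avoiding the ball $B_r(w)$ is at most $Ce^{-\epsilon r}$ times the full weight $\G_1(\X_g)(\xi,\id)$. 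This is exactly the content of the Ancona inequality (Gouëzel's Lemma 2.5/2.6). Transitivity of $S$ (H$_4$) and the Ruelle--Perron--Frobenius bounds on $\varphi$ replace irreducibility, since the bounded distortion of $e^{\varphi_n}$ on cylinders makes concatenation of path weights multiplicative up to constants.

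For (1), I use the Martin boundary argument of \cite{Shwartz--Thermodynamic-Formalism-For-Transient--CMP2019}. Since $R>1$ and hence $\G_1$ is finite, the walk is transient. By \cite{Bispo-Stadlbauer--The-Martin-Boundary-Of--IJM2023}, the Martin boundary of $(S,\nu)$ at $r=1$ is identified with $\partial G$. A supermartingale convergence argument then gives a.s. convergence of $\gamma_n(\xi)$ in the Martin compactification. Transience together with non-amenability keeps $d_w(\id,\gamma_n(\xi))\to\infty$, so the limit $\pi(\xi)$ lies in $\partial G$.

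For (2), fix $n$ and the point $s(n)$. I condition on the limit point through the Martin kernel, obtaining a Doob $h$-transform in which the trajectory goes to $\pi(\xi)$. The deviation estimate, applied to long geodesic segments through $s(n)$ and passed to the limit using the Martin kernel (Ancona again), shows that the probability that the entire trajectory misses $B_r(s(n))$ is at most $Ce^{-\epsilon r}$, uniformly in $n$. Choosing $r=C'\log\log n$ only gives probability $(\log n)^{-\epsilon C'}$, which is not summable. So I apply Borel--Cantelli along $n_k=2^k$, which gives the summable bound $k^{-\epsilon C'}$. I then interpolate: if $s(n)$ with $n_k\le n<n_{k+1}$ were far from all $\gamma_m$, a thin-triangles argument bounds the distance to the nearby checkpoint. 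I expect this interpolation to be the main obstacle. The difficulty is that naive interpolation loses a term of size $n_{k+1}-n_k$, so I would instead use checkpoints at spacing $\log n$ on $[n_k,n_{k+1}]$. That gives about $2^k/k$ events, each of probability $(k)^{-\epsilon C'}$, which is still not summable. The honest route is then to cover $[n_k,n_{k+1}]$ by excursion blocks and use the exponential decay of the conditional deviation more carefully. Failing that, I would settle for $C\log n$ and check that this is what the later proofs actually need.

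For (3), the bound is on $\gamma_n$ itself and has the larger error $\log m_n$. I use the $\log$-moment argument of Maher--Tiozzo. First, by positive drift (\cite{Stadlbauer--An-Extension-Of-Kestens--AM2013}, $R>1$), $d_w(\id,\gamma_n)\asymp n$ a.s. Second, the Gromov product satisfies $P((\gamma_n\cdot \pi)_{\id}<d_w(\id,\gamma_n)-r)\le Ce^{-\epsilon r}$. This follows because after time $n$ the remaining walk, started at $\gamma_n$ with shifted symbol $\theta^n\xi$ (stationarity of $\nu$), leaves in the direction of $\id$ with exponentially small weight; the bound is again an Ancona/shadow estimate. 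Taking $r=C\log n$ and applying Borel--Cantelli, $\gamma_n$ is within $2C\log n+O(\delta)$ of the projection $s(m_n)$ of $\gamma_n$ on the ray. Since $m_n\asymp n$, this gives $C\log m_n$.
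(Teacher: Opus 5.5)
Your part (1) is the paper's argument: Shwartz's convergence to the minimal Martin boundary, plus its identification with $\partial G$ in \cite{Bispo-Stadlbauer--The-Martin-Boundary-Of--IJM2023}. Part (2), however, has a genuine gap. You never reach $\log\log n$, and your fallback $C\log n$ is a strictly weaker statement. The obstruction is your main tool, which is only an exponential avoidance bound $Ce^{-\epsilon r}$. As you compute yourself, with $r=C'\log\log n$ this gives $(\log n)^{-\epsilon C'}$, and no arrangement of checkpoints can repair that.

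The missing idea is that at $r=1<R$ the weight of trajectories avoiding a ball decays \emph{double} exponentially. Roughly: by hyperbolicity, a path that gets past $s(k)$ while avoiding $B_\ell(s(k))$ has length at least exponential in $\ell$. Since $R>1$, the weight of long paths decays geometrically in their length. This is the bound $2^{-\lambda_1^\ell}$ of Lemma 4.5 in \cite{Bispo-Stadlbauer--The-Martin-Boundary-Of--IJM2023}, the analogue of Gou\"ezel's superexponential estimate, and it is what the paper uses.

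This bound survives the exponential losses incurred when absolute weights are converted into $m_x$-probabilities. Those conversions use the Ancona inequality and $\G_1(\X_{s(k-\ell)})(\xi,s(k+\ell))\geq C\lambda_2^{2\ell}$, and the result is $m_x(\text{the orbit misses } B_\ell(s(k)))\leq C2^{-\lambda_3^\ell}$ uniformly in $x$ and $k$. With $\ell_n=D\log\log n$ this equals $C2^{-(\log n)^{D\log\lambda_3}}$, which is summable once $D\log\lambda_3>1$. Borel--Cantelli then applies at every $n$, so the dyadic blocks and interpolation you worry about are unnecessary.

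To conclude for $\nu$ rather than for the conditioned measure, the paper does two things. It rewrites the events through Gromov products, for measurability in $x$. It then uses that $\nu$ times counting measure is conformal, hence an integral of the minimal measures $m_x$. Your Doob-transform step needs to be made precise in this way.

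For (3) you take a different route, which is workable in outline. You apply Borel--Cantelli over times $n$: you decouple $\gamma_n(\xi)^{-1}$ from $\pi(\theta^n\xi)$ via the Gibbs property, bound shadows exponentially, and then trade $\log n$ for $\log m_n$ using linear drift. The paper instead applies Borel--Cantelli over levels $m$ on the ray. It bounds the $m_x$-weight of orbits that ever visit $\Omega^\ast_{m,t}(x)$, the level-$m$ points at distance about $t$ from the ray, by $\lambda_4^t$. This uses Ancona and the sphere-summability Lemma 4.4 of \cite{Bispo-Stadlbauer--The-Martin-Boundary-Of--IJM2023}, and $t=C\log m$ is then summable over $m$. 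The paper's version yields $\log m_n$ directly and needs no drift.

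Your version has two extra requirements:
\begin{itemize}
\item You need $m_n\geq n^c$. This does not follow from $R>1$ without a further argument. It is Theorem \ref{theo:positive-drift}, which the paper proves afterwards using the convergence in part (1), so there is no circularity, but it is an additional input.
\item You need the uniform shadow bound $\sup_g\nu(\{\xi:(g\cdot\pi(\xi))_{\id}>r\})\leq Ce^{-\epsilon r}$. This is where Lemma 4.4 would have to enter.
\end{itemize}
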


\begin{proof} By Theorem 4.1 in \cite{Shwartz--Thermodynamic-Formalism-For-Transient--CMP2019}, for $\nu$-almost every $\xi \in \Sigma$, the sequence $(\gamma_n(\xi))$ converges to the minimal Martin boundary, which is, under the above assumptions, homeomorphic to $\partial G$ (Theorem 5.2 in \cite{Bispo-Stadlbauer--The-Martin-Boundary-Of--IJM2023}), and therefore $(\gamma_n(\xi))$ converges at infinity, say to $\pi(\xi) \in \partial G$. This proves the first assertion.

Let us now fix $x \in \partial G$, a geodesic ray $s : [0, \infty) \to G$ from $\id$ to $x$ and set
\begin{equation}
 m_n(f) :=  \frac{\G_1(f)(\xi, s(n))}{\G_1(\X_{\id})(\xi, s(n))}. \label{eq:definition-of-m-n-xi}
\end{equation}
By Theorem 5.2 in \cite{Bispo-Stadlbauer--The-Martin-Boundary-Of--IJM2023},
$ m_x(f) := \lim_{n \to \infty} m_n(f)$
exists for any Lipschitz continuous function $f$ with compact support and the limit does not depend on $\xi$. Moreover, $m_x$ is a $\sigma$-finite measure $m_x$ with $\cL^\ast_S(m_x) = m_x$ such that $m_x$ is minimal within this class.

We now make use of an estimate in \cite{Bispo-Stadlbauer--The-Martin-Boundary-Of--IJM2023} as follows. Assume that
$  k < m $ in $\N$ and $\ell \in \N $ are such that $0 < k - \ell < k+\ell < m$. Moreover,
set $\Omega_m := \{ g \in G: (s(m) \cdot x)_{\mathbf{o}}  \leq (g \cdot x)_{\mathbf{o}} \}$ and
\begin{align*} E(\eta,g) := \left\{ (\bar{\eta},n) \in \Sigma \times \N : \exists n> 0 \hbox{ s.t. } S^n(\bar \eta,\id ) = (\eta,g),
\right.
\\
\left.
d_\mathrm{w}(\gamma_t(\bar{\eta}), s(k)) > \ell \hbox{ and }  \gamma_t(\bar{\eta}) \notin \Omega_m \forall t< n
 \right\}. \end{align*}
That is, $E(\eta,g)$ corresponds to those orbits from $\mathbf{o}$ to $\Omega_m$ which never pass through $B_\ell(s(k))$ before entering
into $\Omega_m$ for the first time. Moreover, as any orbit converging to $x$ has to pass through $\Omega_m$, it follows that for $m_x$-a.e. $(\eta,\id)$ with this property, there is a unique pair $( \bar\eta, g)$ with $(\eta,\cdot) \in E(\bar\eta,g)$. So let us define
\[
A_{k,\ell,m} := \left\{ \eta : \exists n \in \N  \hbox{ s.t. } (\eta,n) \in \textstyle\bigcup_{(\bar\eta,g) \in \Sigma \times \Omega_m} E(\bar\eta,g)  \right\}.
\]
The Hölder continuity of $\varphi$ implies that $  F(\eta,g) :=\sum_{(\bar{\eta},n) \in E(\xi,g)} \exp \varphi_n(\bar\eta)$ is locally Lipschitz continuous on its domain of definition. Hence,
\begin{align*}
 m_x\left(\left\{ A_{k,\ell,m}  \times \{\id\})  \right\}\right)   =    \int F \dd m_x
 =   \lim_{n\to \infty} \frac{\G_1(\mathbf{1}_{A_{k,\ell,m} \times \id}  )(\xi, s(n))}{\G_1(\X_{\id})(\xi, s(n))}
\end{align*}
We now use Lemma 4.5 in  \cite{Bispo-Stadlbauer--The-Martin-Boundary-Of--IJM2023} in order to obtain a double exponential bound. In order to do so, recall that it was shown in there, that there exists $\lambda_1 > 1$ such that, for any such configuration with $m$ sufficiently large,
\[
\sum_{({\eta},m) \in E(\bar{\eta},g)}
e^{\varphi_m( {\eta})} \leq 2^{-\lambda_1^\ell}.\]
However, by combining this result with the Ancona inequality (Th. 4.6 (i) in \cite{Bispo-Stadlbauer--The-Martin-Boundary-Of--IJM2023}), it follows  that there exists $C> 0$ such that for any of these configurations
\[
\sum_{(\bar{\eta},m) \in E}
e^{\varphi_m(\bar{\eta})} \leq C
2^{-\lambda_1^\ell}
\G_1(\X_{\id})(\xi, s(k - \ell))
\G_1(\X_{s(k+\ell)})(\xi,g).
\]
As $\gamma(\Sigma)$ is finite, it follows that there exists $\lambda_2 > 0$ such that $\G_1(\X_{s(k-\ell)})(\xi, s(k+\ell)) \geq C \lambda_2^{2\ell}$, by eventually increasing $C$. For $k_-=0$ and $k_+ = n$, this implies that, again by eventually increasing $C$ and for some $\lambda_3 > 1$,
\begin{align*}
 m_x\left(\left\{ A_{k,\ell,m}  \times \{\id\})  \right\}\right)
\leq &  C  \lim_{n \to \infty}
\frac{2^{-\lambda_1^\ell} \G_1(\X_{\id})(\xi, k - \ell) \G_1(\X_{s(k + \ell)})(\xi, s(n)) }{\G_1(\X_{\id})(\xi, s(n))} \\
\leq & C 2^{-\lambda_1^\ell} \lambda_2^{-2\ell} \leq C2^{-\lambda_3^\ell}.
\end{align*}
However, as the constant $C$ only depends on the uniform constant in Ancona's inequality, $\min \varphi$ and $\lambda_1$, the constant is uniform in $ x, k$ and $\ell$. Letting $m \to \infty$ now shows that
\[ m_x\left(\left\{ (\eta,\id) : \gamma_t(\eta) \notin B_\ell(s(k)) \right\}\right) < C2^{-\lambda_3^\ell}.\]
Choose $D > 0$ such that $ \sum_{n> 2} C2^{-\lambda_3^{\ell_n}} < \infty$, for $\ell_n := D \log\log n$, and define $k_2 =0$ and, by induction, $k_{n+1} := k_n + \ell_{n+1}$. It now follows from the Borel-Cantelli Lemma, that for $m_x$-a.e. $(\eta,\id)$, there is some $N \in \N$ such that $(\gamma_t(\eta))_{t > 0}$ passes through $B_{\ell_n}(s(k_n))$ for all $n > N$.
It then follows from the hyperbolicity of $G$ that this means that
$(\gamma_t(\eta))_{t > 0}$  has to pass through $B_{\ell_n}(s(m))$ for any $n>M$ and $k_{n-1}< m < k_n$. As $k_n$ grows faster than $n$, it is possible to express the above independently of $(k_n)$ and $(\ell_n)$. That is, for  $m_x$-a.e. $(\eta,\id)$, there is some $N \in \N$ such that $(\gamma_t(\eta))_{t > 0}$ passes through $B_{\ell_n}(s(n))$ for all $n > N$.

In order to provide measurability in $x$, it is necessary to rewrite the above in terms of Gromov products as follows. For $m \in \N$ define $\Omega_m^\ast := \{ g : m -d \leq (g \cdot x)_{\mathbf{o}}  \leq m+ d \}$, where $d$ is chosen such that each path from ${\mathbf{o}}$ to $x$ has to pass through $\Omega_m^\ast$. With
\[
\Omega_{m,t}^\ast\left(x\right) := \left\{ g \in \Omega_m^\ast: (\mathbf{o},x)_g > t \right\},
\]
the above implies that for  $m_x$-a.e. $(\eta,\id)$, there is some $N \in \N$ such that $(\gamma_t(\eta))_{t > 0}$ passes through $\Omega_{m,\ell_m + K}^\ast\left(x\right)$ for all $m > N$ and some constant depending on $d$ and the hyperbolicity of $G$.

As the product of $\nu$ and Haar measure is a conformal measure, this product is a convex combination of $\{m_x : x\in \partial G\}$. Hence,
for  $\mu$-a.e. $\eta$, there is some $N \in \N$ such that $(\gamma_t(\eta))_{t > 0}$ passes through $\Omega_{m,C \log \log m}^\ast\left( \pi(\eta)\right)$ for all $m > N$ and for some $C > D$. This proves the second part.

The proof of the remaining assertion follows the same lines. For $x \in \partial G$, $t \in \N$, $ {\eta} \in \Sigma$ and  $g \in \Omega_{m,t}^\ast(x)$, define
\[ E^\ast(\eta,g) := \left\{
(\bar{\eta},n) \in \Sigma \times \N : S^n(\bar{\eta},\id) = (\eta, g), \, \gamma_t(\bar{\eta}) \notin \Omega_{m,t}^\ast(x) \forall k = 0,\ldots,n-1
\right\},\]
and $F(\eta,g) := \sum_{(\bar{\eta},n) \in E^\ast(\eta,g)} \exp \varphi_n(\bar{\eta})$ whenever $E^\ast(\eta,g) \neq   \emptyset$ and $0$ else. As  $F$ is locally Hölder continuous, it follows as above that
\[
\int F \dd m_x =
m_x\left(\left\{ (\eta,\id) : \gamma_n(\eta) \in \Omega_{m,t}^\ast(x) \hbox{ for some } n \in \N \right\} \right).
\]
Furthermore, a trivial estimate and, for $m$ sufficiently large and any $\xi \in \Sigma$, the Ancona inequality imply that
\begin{align*}
\int F \dd m_x \leq & \int \X_{ \Omega_{m,t}^\ast(x)} \G_1(\X_{\id})  \dd m_x
\\ \asymp & \sum_{g \in \Omega_{m,t}^\ast(x)} \G_1(\X_{\id})(\xi, s(m))  \G_1(\X_{s(m)})(\xi, g)  m_x(\X_g)
\\
\asymp  & \sum_{g \in \Omega_{m,t}^\ast(x)}  \G_1(\X_{g} \G_1(\X_{s(m)}))   (\xi, s(m)).
\end{align*}
By Lemma 4.4 in \cite{Bispo-Stadlbauer--The-Martin-Boundary-Of--IJM2023},  $\sum_{g : d_G(s(m),g) =n}  \G_R(\X_{g} \G_R(\X_{s(m)}))   (\xi, s(m))$ is uniformly bounded in $n$, where $R> 1$ is the critical parameter of $\G_r(\X_{\id})$. It is now easy to see that this implies that there is $\lambda_4$ such that
\[
 \sum_{g \in \Omega_{m,t}^\ast(x)}  \G_1(\X_{g} \G_1(\X_{s(m)}))   (\xi, s(m)) \ll \lambda_4^t.
\]
The remaining assertion now follows as above by applying the Borel-Cantelli Lemma.
\end{proof}

\begin{remark} \label{rem:convergence-to-the-boundary-for-r}
Even though this is not directly related to the question under consideration here, it is worth noting that Proposition \ref{prop:convergence-to-the-boundary} holds in the following, more general context. As in \cite{Bispo-Stadlbauer--The-Martin-Boundary-Of--IJM2023}, we refer to a $\sigma$-finite measure $m$ on $\Sigma \times G$ as $1/r$-conformal if $\cL_S^\ast(m) = r^{-1} m$. Furthermore, it is shown there that, for each $r \in [1, R]$, the set of minimal $1/r$-conformal measures can be identified with $\partial G$ via the limit in \eqref{eq:definition-of-m-n-xi} as $n \to \infty$, replacing $\G_1$ with $\G_r$. Since the proofs of the first two assertions do not rely on Lemma 4.4 in \cite{Bispo-Stadlbauer--The-Martin-Boundary-Of--IJM2023}, they remain valid for any $1/r$-conformal measure $m$ with $r \in [1, R]$. The third assertion, however, depends on this lemma and therefore holds only for parameters $r \in [1, R)$, even in the presence of condition (S).
\end{remark}

We now show that $(\gamma_n)$ has \emph{positive drift}, i.e. $\lim_{n \to \infty} d_G(\gamma_n(\xi),\id) > 0$ almost surely. For random walks on linear groups, this is a classical theorem due to Furstenberg.

\begin{theorem} \label{theo:positive-drift}
 Assume that  (H$_1$ - H$_4$) hold. Then there exists $a> 0$ such that for $\nu$-almost every $\xi \in \Sigma$,
$\lim_{n \to \infty} d_G(\gamma_n(\xi),\id)/n =a $.
\end{theorem}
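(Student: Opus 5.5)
The plan is to obtain the limit from Kingman's subadditive ergodic theorem, and then to rule out the value $0$ using the geodesic tracking in Proposition \ref{prop:convergence-to-the-boundary}.

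\emph{Existence of the limit.} Set $a_n(\xi) := d_G(\gamma_n(\xi),\id)$. The cocycle identity $\gamma_{n+m}(\xi) = \gamma_n(\xi)\gamma_m(\theta^n\xi)$ and left invariance of the word metric give
\[ a_{n+m}(\xi) \leq a_n(\xi) + a_m(\theta^n \xi). \]
Since $\gamma(\Sigma)$ is finite, $a_1$ is bounded. The equilibrium state $\nu$ is ergodic because $\theta$ is transitive. Kingman's theorem therefore yields a constant $a = \lim_n a_n/n = \inf_n \frac1n\int a_n \dd\nu$ for $\nu$-almost every $\xi$.

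\emph{Positivity.} I would argue by contradiction and assume $a = 0$, so that $d_G(\gamma_n(\xi),\id) = o(n)$ almost surely. Fix a typical $\xi$ and a geodesic ray $s$ from $\id$ to $\pi(\xi)$. By Proposition \ref{prop:convergence-to-the-boundary}(3), for all large $n$ there is $m_n$ with
\[ d_G(\gamma_n(\xi), s(m_n)) < C\log m_n. \]
It follows that $m_n \leq d_G(\gamma_n(\xi),\id) + C\log m_n$, hence $m_n = o(n)$. By Proposition \ref{prop:convergence-to-the-boundary}(2), the orbit $(\gamma_t(\xi))_t$ visits every ball $B_{C\log\log k}(s(k))$ for large $k$. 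Consequently, by time $n$ the walk has visited only the $o(n)$ balls around $s(1),\ldots,s(m_n)$ along the ray.

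To turn this into a contradiction, I would use non-amenability, namely $R>1$ (cf. \cite{Stadlbauer--An-Extension-Of-Kestens--AM2013}). Because $\G_1(\X_g)(\xi,\id)$ is dominated by $\sum_n R^{-n}$-type bounds, the return probabilities satisfy $P_n(\id,g) \ll R^{-n}$ uniformly in $g$. Then
\[ \nu\bigl(d_G(\gamma_n,\id)\leq \epsilon n\bigr) \leq |B_{\epsilon n}(\id)|\, \sup_g P_n(\id,g) \ll e^{v\epsilon n} R^{-n}, \]
where $v$ is the volume growth rate of $G$. Choosing $\epsilon < \log R / v$ makes this summable, so Borel--Cantelli gives $\liminf_n d_G(\gamma_n,\id)/n \geq \epsilon > 0$. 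With this argument, positivity does not need the tracking statements at all. The estimate needed is therefore $\sup_g P_n(\id,g) \ll r^{-n}$ for some $r \in (1,R)$.

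\emph{Main obstacle.} The hard step is this uniform exponential decay of $P_n(\id,g)$ for dependent increments. I would derive it from the finiteness of $\G_r(\X_g)(\xi,\id)$ for $1<r<R$, using $P_n(\id,g) = \int \cL_S^n(\X_g)(\cdot,\id)\dd\nu$. I would also use a Harnack-type comparison showing that $\G_r(\X_g)(\xi,\id) \leq \G_r(\X_{\id})(\xi,\id)\cdot C$ uniformly in $g$, which would come from the Ancona inequality or from \cite{Bispo-Stadlbauer--The-Martin-Boundary-Of--IJM2023}. Summing the power series bounds each coefficient by $C' r^{-n}$. The remaining parts of the argument are then routine.
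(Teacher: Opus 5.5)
Your existence step is fine: $a_n=d_G(\gamma_n,\id)$ is subadditive with bounded increments and $\nu$ is ergodic, so Kingman gives an a.s.\ constant limit. The gap is in the positivity step, at the ``Harnack-type comparison'' $\G_r(\X_g)(\xi,\id)\le C\,\G_r(\X_{\id})(\xi,\id)$ uniformly in $g$. You use it to get $\sup_g P_n(\id,g)\ll r^{-n}$ from $R>1$. This comparison is false in general for $r\in(1,R)$, and it cannot come from Ancona's inequality.

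Here is a counterexample. Take the full shift on $\{a^{\pm1},b^{\pm1}\}$, with $\gamma$ the identity on letters and the locally constant potential of the i.i.d.\ walk on $F_2=\langle a,b\rangle$ given by $\mu(a)=0.9$, $\mu(a^{-1})=0.01$, $\mu(b^{\pm1})=0.045$. Then (H$_1$)--(H$_4$) hold. A Chernoff bound for the abelianised walk on $\mathbb{Z}^2$ gives $\mu^{*n}(\id)\le(2\sqrt{0.009}+0.09)^n\le 0.28^n$, so $R>3$. Yet $\G_2(\X_{a^{-k}})(\xi,\id)=\sum_n 2^n\mu^{*n}(a^k)\ge 1.8^k\to\infty$. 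On the tree the Green function is exactly multiplicative along geodesics, so Ancona's inequality holds at $r=2$ but gives no such bound. Likewise $\sup_g\mu^{*n}(g)\ge 0.9^n$, so the uniform-in-$g$ decay rate is not governed by $R$ at all. For i.i.d.\ increments that rate comes from Day's $\ell^2$-norm theorem, which is a different input from $R>1$.

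You do not need uniformity in $g$, so the argument is easy to repair. By (H$_4$) and finiteness of $\cA$ and of the generating set, there is $C$ with the following property. For every $g$ and every symbol $c$ there is an admissible word $w$ with $|w|\le C\,d_G(\id,g)$, $\gamma_{|w|}=g^{-1}$ on $[w]$, and $wc$ admissible. Prepend $w$ to each $\eta$ counted in $\G_r(\X_{g^{-1}})(\xi,\id)$, i.e.\ each $\eta$ with $\theta^n\eta=\xi$ and $\gamma_n(\eta)=g$. This map is at most $|\cA|$-to-one. Using $r\ge 1$, it gives $\G_r(\X_{g^{-1}})(\xi,\id)\le|\cA|\,K^{d_G(\id,g)}\,\G_r(\X_{\id})(\xi,\id)$ with $K=e^{-C\min\varphi}$. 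Now $\nu(\gamma_n=g)=\int\cL_S^n(\X_{g^{-1}})(\cdot,\id)\dd\nu$, and for fixed $r\in(1,R)$ the function $\G_r(\X_{\id})(\cdot,\id)$ is bounded uniformly in $\xi$ by bounded distortion. Hence $\nu(\gamma_n=g)\ll r^{-n}K^{d_G(\id,g)}$ and $\nu(d_G(\gamma_n,\id)\le\epsilon n)\ll(e^{v\epsilon}K^{\epsilon}/r)^n$. This is summable once $\epsilon(v+\log K)<\log r$, and Borel--Cantelli finishes.

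With this fix your proof is correct and takes a genuinely different, more elementary route than the paper. The paper bounds the hitting times $\mathfrak{n}_k$ of $\{(g\cdot\pi(\xi))_{\id}\ge k\}$ via $\int\mathfrak{n}_k\dd m_x\ll k$. That bound uses $\G_1^2$, Ancona's inequality, Lemma 4.4 of Bispo--Stadlbauer and the minimal conformal measures $m_x$, followed by Derriennic's almost subadditive theorem, so it relies on hyperbolicity and Martin-boundary machinery throughout. Your Kesten-type argument needs only $R>1$, transitivity and exponential growth. The tracking paragraph you discarded is indeed unnecessary.
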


\begin{proof}
Fix $\xi \in \Sigma$ such that $x:=\pi(\xi)$ exists.  Furthermore, choose a geodesic half ray $s: [0,\infty)$ from $\id$ to $x$. For $k,n \in  \N$, set
\begin{align*}
 F_k & := \{ (\eta, g) : (g \cdot x)_{\id}  \geq k \}, \quad
 P_k  := \{ (\eta, g) : (g \cdot x)_{\id}  < k \},\\
 \mathrm{FE}^n_k & :=  S^{-n}(F_k) \cap \bigcap_{\ell =0}^{n-1} S^{-\ell}(P_k) \cap \Sigma \times \{ \id \}, \quad \mathrm{FE}_k  := \bigcup_{n \geq 0} \mathrm{FE}^n_k
\end{align*}
 It now follows from Proposition \ref{prop:convergence-to-the-boundary} that
 $\mathfrak{n}_k (\eta) := \min \{ n : \eta \in \mathrm{FE}^n_k \} < \infty$ almost surely with respect to $m_x$. We now show that
 $\int_{\Sigma \times \{ \id \}} \mathfrak{n}_k  \dd m_x \ll k$ uniformly in $\xi,\eta$ and $k$.

 In order to do so, we first have to introduce some notation. For $a = (a_0,\ldots,a_{n-1}) \in \cA^n$, set $ [a] = [a_0\ldots a_{n-1}] := \{(\eta_i) \in \Sigma : a_\ell = \xi_\ell, 0 \leq \ell < n \}$ and $|a|=n$. Moreover, set $\Omega_k:= \bigcup_{n \geq 0} S^n(\mathrm{FE}^n_k)$ and note that $\mathfrak{n}_k(\eta) = \min\{n : S^n(\eta,\id) \in \Omega_k \}$. It now follows from conformality of $m_x$ and the simple identity $\G_1 \circ \G_1  - \G_1= \sum_{n \geq 0} n\cL_S^n$ that
 \begin{align}
  \nonumber
  \int_{\Sigma \times \{ \id \}} \mathfrak{n}_k  \dd m_x &
    = \sum_{n=0}^\infty m_{\xi}( \{(\eta,\id) : \mathfrak{n}_k(\eta) = n \})
    = \sum_{n=0}^\infty \quad \sum_{|a| = n, \mathfrak{n}_k|_{[a]} =n} n m_x([a] \times \{\id \})
  \\ & \nonumber
    = \sum_{n=0}^\infty \quad \sum_{|a| = n, \mathfrak{n}_k|_{[a]} =n} n \int \mathbf{1}_{\Omega_k} \cL_S^n(\mathbf{1}_{[a] \times \id})  \dd m_x
   \\ &  \label{eq:upperbund-for-return-time}
   \leq  \sum_{n=0}^\infty   \int \mathbf{1}_{\Omega_k} n \cL_S^n(\X_{\id})  \dd m_x
    \leq  \int \mathbf{1}_{\Omega_k}   \G_1^2(\X_{\id})  \dd m_x.
 \end{align}
 As $S$ is topologically transitive and $\varphi$ is Hölder continuous, it follows that the radius of convergence $R$ of $\sum_n r^n\cL^n(\X_{\id})(\xi,g)$ does not depend on $\eta$ and $g$. Furthermore,
 as $G$ is non-amenable, it follows from \cite{Stadlbauer--An-Extension-Of-Kestens--AM2013} that $R> 1$. Hence, $\G_1^2(f) = \G_1 \circ \G_1(f)$ is well defined for any continuous function $f: \Sigma \times G \to \R$ with compact support.

 As the next step of the proof, we now show that $\G_1^2(\X_{\id})(\eta,g)/ \G_1(\X_{\id})(\eta,g)$ grows at most linearly in $d_G(\id,g)$.
 In order to do so, observe that   Ancona's inequality (see Th. 4.6 (i) in \cite{Bispo-Stadlbauer--The-Martin-Boundary-Of--IJM2023}) implies that
 \begin{align*}
  & \G_1^2(\X_{\id})(\eta, s(n)) \\
     = & \sum_{(g \cdot x)_{\id} \leq 0 }
       \G_1 \circ ( \X_g \G_1(\X_{\id})(\eta, s(n)) +
       \sum_{k=1}^{n-1} \sum_{(g \cdot x)_{\id} = k }
       \G_1 \circ ( \X_g \G_1(\X_{\id})(\eta,s(n)) \\
  & + \sum_{(g \cdot x)_{\id} \geq n }
       \G_1 \circ ( \X_g \G_1(\X_{\id})(\eta, s(n)) \\
    \asymp & \sum_{(g \cdot x)_{\id} \leq 0 }  \G_1 \circ ( \X_g \G_1(\X_{\id})(\eta, \id) \cdot \G_1(\X_{\id})(\eta, s(n)) \\
    & + \sum_{k=1}^{n-1} \sum_{(g \cdot x)_{\id} = k }
       \G_1 \circ ( \X_g \G_1(\X_{s(k)})(\eta,s(k)) \cdot \G_1(\X_{\id})(\eta, s(n)) \\
    &  +  \sum_{(g \cdot x)_{\id} \geq n }  \G_1 \circ ( \X_g \G_1( \X_{s(n)})(\eta, s(n)) \cdot \G_1(\mathbf{1}_{\Sigma \times \id})(\eta, s(n)) \\
    \ll & (n+1) \G_1^2(\X_{\id})(\eta, \id) \G_1(\X_{\id})(\eta, s(n))
    \ll n \G_1(\X_{\id})(\eta, s(n)).
 \end{align*}
 It now follows from \eqref{eq:upperbund-for-return-time}, from $m_x = \lim_n m_n$ (cf. \eqref{eq:definition-of-m-n-xi}), from Ancona's inequality and from the reverse triangle inequality in Gromov hyperbolic spaces that
  \begin{align*}
    \int_{\Sigma \times \{ \id \}} \mathfrak{n}_k  \dd m_x &
    \ll \sum_{[a]\times \{g\} \subset \Omega_k} d_G(\id,g) \G_1(\X_{\id})(\eta, g) m_x([a]\times \{g\} ) \\
    & \asymp k \sum_{[a]\times \{g\} \subset \Omega_k}  \tfrac{k + d_G(s(k),g)}{k} \;  \G_1(\X_{s(k)})(\eta, g) \;  \G_1(\mathbf{1}_{[a] \times \{g\}})(\eta, s(k))
    \\ & \leq
    k \sum_{g \in G}  \left(1 + d_G(\id,g)/k \right)  \;  \G_1(\X_{\id})(\eta, g) \;  \G_1(\X_g)(\eta, \id)
  \end{align*}
In order to show that the sum in the last line is convergent, it now suffices to note that Lemma 4.4 in \cite{Bispo-Stadlbauer--The-Martin-Boundary-Of--IJM2023}, as in the proof of  Proposition \ref{prop:convergence-to-the-boundary}, implies that  $\sum_{d_G(\id,g) =n}$ $\G_1(\X_{\id})(\eta, g)  $ $ \G_1(\X_g)(\eta, \id) $  decays exponentially in $n$. Hence, $\int_{\Sigma \times \{ \id \}} \mathfrak{n}_k \dd m_x \ll k$.

 It remains to show how to deduce the statement of the theorem from this estimate by applying Derriennic's almost subadditive ergodic theorem (see \cite{Derriennic--Un-Theoreme-Ergodique-Presque--AP1983}). First, observe that by a tree approximation (cf. \cite[Theorem 12]{Ghys-DelaHarpe--Le-Bord-Dun-Espace--1990_2}) there exists a constant $C > 0$ such that $\mathfrak{n}_k(\xi) + \mathfrak{n}_\ell (\theta^{k}\xi)  - C \leq  \mathfrak{n}_{k + \ell }(\xi)$ for almost all $\xi \in \Sigma$ and $k, \ell \in \N$. As $\int_{\Sigma \times \{ \id \}} \mathfrak{n}_k \dd m_x/k \ll 1$, it follows that the same holds with respect to $\nu$ and, in particular, that the almost subadditive ergodic theorem is applicable. Hence, there is $c < \infty$ such that
 $c = \lim_k  \mathfrak{n}_k(\eta)/k$ $\nu$-almost surely. The theorem now follows from the observation that $k \leq d_G(\id,\gamma_{\mathfrak{n}_k})$ and a further application of the  subadditive ergodic theorem.
\end{proof}

\begin{remark} \label{rem:positive-drift-for-r} In analogy to Remark \ref{rem:convergence-to-the-boundary-for-r}, observe that the main part of the proof carries over  verbatim to $1/r$-conformal measures as long as  $r \in [1, R)$. However, as the subadditive ergodic theorem is not applicable, one obtains that there is $C_r> 0$ such that for $\mathfrak{n}_k(\xi) := \min\{ n : (\gamma_{n}(\xi) \cdot \pi(\xi))_{\id}> k \}$ and any $1/r$-conformal measure $m$ with $m(\Sigma \times \{\id\}) =1$,
\[  \int_{\Sigma \times \{\id\}} \mathfrak{n}_k  \dd m \leq C_r k \quad   \forall k \in \N.\]
\end{remark}

As a consequence of Theorem \ref{theo:positive-drift}, the ergodic theorem and Proposition \ref{prop:convergence-to-the-boundary}, one now immediately obtains the following result.

\begin{corollary} \label{cor:pos-lyapunov-exponent}
 If (H$_1$ - H$_4$) hold, then,  for $\nu$-almost every $\xi \in \Sigma$,
 \[  \lim_{n \to \infty} \tfrac1n  d_G(\gamma_n(\xi),\id)  =
  \lim_{n \to \infty} \tfrac1n  \beta(\gamma_n(\xi),\pi(\xi))
  > 0.
 \]
\end{corollary}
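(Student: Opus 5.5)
The plan is to deduce both limits from Theorem \ref{theo:positive-drift} and Proposition \ref{prop:convergence-to-the-boundary}, working entirely through the definition \eqref{def:Busemann-geral}, $\beta_{\id}(g,x) = d_w(\id,g) - 2(g\cdot x)_{\id}$. The first limit, $\lim_n \tfrac1n d_G(\gamma_n(\xi),\id) = a > 0$, holds $\nu$-almost surely by Theorem \ref{theo:positive-drift}, so nothing further is needed there. The work is to compare $\beta_{\id}(\gamma_n(\xi),\pi(\xi))$ with $d_G(\gamma_n(\xi),\id)$ up to an error of order $o(n)$. Since the Busemann function differs from $d_w(\id,\gamma_n)$ only through the Gromov product term, the central estimate concerns $(\gamma_n(\xi)\cdot \pi(\xi))_{\id}$.

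\textbf{Step 1.} Fix $\xi$ in the full-measure set on which Proposition \ref{prop:convergence-to-the-boundary}(1) and (3) and Theorem \ref{theo:positive-drift} all hold. Choose a geodesic ray $s$ from $\id$ to $\pi(\xi)$. By part (3) there are $m_n$ with $d_G(\gamma_n(\xi),s(m_n)) < C\log m_n$. By the triangle inequality, $m_n = d_G(\gamma_n,\id) + O(\log m_n)$, and since $d_G(\gamma_n,\id)\asymp n$ this gives $m_n \asymp n$. Hence $\log m_n = O(\log n)$.

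\textbf{Step 2.} For a point $s(m)$ on a geodesic ray to $x$, one has $(s(m)\cdot x)_{\id} = m + O(\delta)$; this follows from the definition of the extended Gromov product with the sequence $s(k)\to x$. Moving a point by distance $r$ changes a Gromov product by at most $r$. Combining these with Step 1 gives
\[
(\gamma_n(\xi)\cdot\pi(\xi))_{\id} = m_n + O(\log n) = d_G(\gamma_n(\xi),\id) + O(\log n).
\]
Substituting into \eqref{def:Busemann-geral} yields $|\beta_{\id}(\gamma_n(\xi),\pi(\xi))| = d_G(\gamma_n(\xi),\id) + O(\log n)$. Dividing by $n$ and using Theorem \ref{theo:positive-drift}, the limit of $\tfrac1n|\beta_{\id}(\gamma_n(\xi),\pi(\xi))|$ exists and equals $a>0$.

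\textbf{Step 3 and main obstacle.} The remaining issue is the sign, and I expect this to be the step needing care. Taken literally, \eqref{def:Busemann-geral} gives $\beta_{\id}(\gamma_n,\pi(\xi)) = -d_G(\gamma_n,\id) + O(\log n)$. So the identity in the corollary must be read with the orientation under which $\beta$ measures the expansion of the fiber map of $T$. That orientation is the one used in \eqref{eq:expansion in the second coordinate}, where $\beta$ is paired with the point that $T^n$ pulls back along $\gamma_n^{-1}$, consistent with the cocycle $S_n\beta_{\id}(\xi,x) = \beta_{\id}(\gamma_n(\xi),x)$.

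To get the stated positive limit, I would verify that this is the orientation in which $\lim_n \tfrac1n\beta(\gamma_n(\xi),\pi(\xi))$ is meant as the Lyapunov exponent. As a cross-check, I would compute with a generic fiber point $x\neq\pi(\xi)$. There $(\gamma_n\cdot x)_{\id}$ stays bounded almost surely, because $\gamma_n\to\pi(\xi)\neq x$. Hence $\beta_{\id}(\gamma_n,x) = d_G(\gamma_n,\id)+O(1)$, and $\tfrac1n\beta_{\id}(\gamma_n,x)\to a$. This confirms that the exponent equals the drift $a>0$ claimed in the corollary.

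The subtle point is keeping track of exactly which boundary point enters the Busemann term. Everything else is the $O(\log n)$ bookkeeping of Steps 1 and 2.
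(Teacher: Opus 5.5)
Your Steps 1 and 2 are correct. They are exactly what the paper's one-line justification (Theorem \ref{theo:positive-drift}, the ergodic theorem, Proposition \ref{prop:convergence-to-the-boundary}) compresses: logarithmic tracking gives $(\gamma_n(\xi)\cdot\pi(\xi))_{\id} = d_G(\gamma_n(\xi),\id)+O(\log n)$, and the drift theorem does the rest. Your version is slightly leaner. Existence of the Busemann limit is inherited from existence of the drift limit, so you never need Birkhoff's theorem. That theorem would also require the cocycle identity for $\beta$, which the paper has exactly only for strongly hyperbolic metrics.

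The weak point is Step 3. You correctly observe that, with \eqref{def:Busemann-geral} taken literally, Steps 1--2 give $\tfrac1n\beta_{\id}(\gamma_n(\xi),\pi(\xi))\to -a$. So the statement has the wrong sign relative to that definition; this inconsistency is in the paper, not in your computation.

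Your generic-point ``cross-check'' does not repair this and should be dropped. For $x\neq\pi(\xi)$, $\beta_{\id}(\gamma_n(\xi),x)$ is a different quantity. In any fixed convention it has the opposite asymptotic sign to $\beta_{\id}(\gamma_n(\xi),\pi(\xi))$, because $(\gamma_n\cdot x)_{\id}=O(1)$ whereas $(\gamma_n\cdot\pi(\xi))_{\id}=d_G(\gamma_n,\id)+O(\log n)$. In the literal convention, $+a$ governs the contraction of $x\mapsto\gamma_n(\xi)^{-1}x$ away from $\pi(\xi)$. At $\pi(\xi)$, that is on the graph of $\pi$ that carries the invariant measure, the literal exponent is $-a$. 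So your check contradicts the corollary in that convention rather than confirming it.

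The correct resolution is to test \eqref{eq:expansion_for_Gromov_boundary} against \eqref{def:Busemann-geral}. Take $g=s(r)$ on a geodesic ray to $x$, and $y$ with $(x\cdot y)_{\id}=R>r$. The left side is $R$. The right side with the literal $\beta$ is $(x\cdot y)_g+\tfrac12(-r-r)=R-2r+O(1)$. Hence \eqref{eq:expansion_for_Gromov_boundary} and \eqref{eq:expansion in the second coordinate} only hold with $\beta_{\id}(g,x)=2(g\cdot x)_{\id}-d_w(\id,g)$, the negative of \eqref{def:Busemann-geral}. The same convention is used in Step 1 of Theorem \ref{theo:hyperbolic-groups-and-lebca}, where $\beta^\ast_{\id}(\gamma_n(\eta^{(c)}_n),z_c)\to+\infty$, and in the application $\int b^\ast \dd m>0$. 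State this convention explicitly, or write the corollary as $\lim_n\tfrac1n d_G(\gamma_n(\xi),\id) = -\lim_n\tfrac1n\beta_{\id}(\gamma_n(\xi),\pi(\xi))$. Your Steps 1--2 then prove it, with both limits equal to $a>0$.
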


\begin{remark} These results are known for simple random walks on hyperbolic groups, which correspond in our setting to a full shift $\Sigma$ with a locally constant and normalized potential such that, in addition, H$_3$ and H$_4$ hold: Ancona showed  almost sure convergence to the boundary (Cor. 6.3 in \cite{Ancona--Theorie-Du-Potentiel-Sur---1990}) and Kaimanovich was able to obtain positive drift (Theorem 7.3 in \cite{Kaimanovich--The-Poisson-Formula-For--AM22000}).
%
\end{remark}

\subsection{Backward contraction in average}
We are now in position to prove that $T$ is backward contracting with respect to a large class of metrics on $\partial G$. In order to do so, recall that two metrics $d_1,d_2$ on a space $Y$ are referred to as quasi-isometric if there exists constants $A\geq 1,$ , $B> 0$ such that, for any pair $x,y \in Y$,
\[A^{-1}d_1(x,y) - B \leq   d_2(x,y) \leq A d_1(x,y) + B, \]
and that Gromov-hyperbolicity is preserved under quasi-isometries.

\begin{theorem} \label{theo:hyperbolic-groups-and-lebca}
Assume that (H$_1$ - H$_4$)  hold, and that $(\Sigma,\theta)$ is topologically mixing. Furthermore, assume that $d^\ast$ is quasi-isometric to the word metric on $G$, and that $d^\ast_{\partial G}(x,y)$ is the associated, induced metric on $\partial G$.  Then there is $\alpha \in (0,1]$ such that the skew product  $T$ is backward contracting in average with respect to $(d^\ast_{\partial G})^\alpha$.
\end{theorem}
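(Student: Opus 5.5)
We have to verify (cp) and (lebca) of Definition \ref{def:LEEA} for $T(\xi,x)=(\theta\xi,\gamma(\xi)^{-1}x)$ on $\Sigma\times\partial G$. The natural choice of base metric is $d_\Sigma(\xi,\eta)=\rho^{|\xi\wedge\eta|}$, with $a<1$ so that $d_\Sigma<a$ forces agreement in the first symbol. The inverse branches of $T^n$ along $\xi^\ast$ act on the fibre by $x\mapsto\gamma_n(\xi^\ast)x$, because $\kappa_n(\xi^\ast)^{-1}=\gamma_n(\xi^\ast)$. Moreover, pairs $\theta^n((\xi_i^\ast))=(\xi_i)$ with $d_\Sigma(\xi_1,\xi_2)<a$ share the first $n$ symbols, so $\gamma_n(\xi_1^\ast)=\gamma_n(\xi_2^\ast)=:g$. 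The quantity in \eqref{eq:lecba} therefore reduces to the $\Sigma$-part, which contracts by $\rho^{\alpha n}$, plus
\[
\sum_{\theta^n\xi^\ast=\xi_1}e^{\varphi_n(\xi^\ast)}\, d^\ast_{\partial G}(g x_1,g x_2)^{\alpha}.
\]
The whole task is to make this fibre sum at most $\tfrac12 d^\ast_{\partial G}(x_1,x_2)^\alpha$ for some fixed $n_0$, uniformly in $\xi_1$ and in $x_1,x_2$ with $d^\ast_{\partial G}(x_1,x_2)<a$.

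For the local estimate I use \eqref{eq:expansion_for_Gromov_boundary}, which holds for $d^\ast$ since it is quasi-isometric to a hyperbolic metric and the visual metric $d^\ast_{\partial G}\asymp\lambda^{(\cdot\,\cdot\,\cdot)}$. Applied to $g^{-1}$, it gives
\[
d^\ast_{\partial G}(gx_1,gx_2)\asymp \lambda^{\frac12(\beta(g^{-1},x_1)+\beta(g^{-1},x_2))}\,d^\ast_{\partial G}(x_1,x_2).
\]
Since $x_1,x_2$ are close, the two Busemann terms differ by $O(1)$ as long as the relevant Gromov product stays below $(x_1\cdot x_2)$; otherwise the distance is trivially bounded. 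The estimate then reduces to showing that
\[
\sup_{\xi,x}\sum_{\theta^n\xi^\ast=\xi}e^{\varphi_n(\xi^\ast)}\,\lambda^{\alpha\beta(\gamma_n(\xi^\ast)^{-1},x)}\longrightarrow 0 .
\]
This is a statement about the backward walk: the inverse-branch weights $e^{\varphi_n(\xi^\ast)}$ define the reversed process, whose increments are $\gamma(\cdot)^{-1}$ read backwards. By Corollary \ref{cor:pos-lyapunov-exponent}, applied to the reversed subshift with the reversed equilibrium state (same $\nu$, again satisfying (H$_1$)--(H$_4$)), we get $\beta(\gamma_n^{-1},x)/n\to\mathfrak a>0$ a.s. for the relevant $x$. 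The difficulty is uniformity in $x$ and $\xi$.

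I expect this uniformity to be the main obstacle. Almost-sure drift alone does not suffice: $\beta(g,x)$ can be as negative as $-|g|$ when $x$ lies in the shadow of $g$. I would proceed in three steps. First, replace $\beta$ by $|g|-2(g\cdot x)$. Second, show that the weighted probability that $(\gamma_n\cdot x)\ge \epsilon n$ is uniformly small in $x$ and $\xi$, using the Martin-boundary/Ancona estimates from the proof of Proposition \ref{prop:convergence-to-the-boundary}, namely the exponential decay of $\sum_{g\in\Omega^\ast_{m,t}}$ via Lemma 4.4 of \cite{Bispo-Stadlbauer--The-Martin-Boundary-Of--IJM2023}, which yield a bound of order $\lambda_4^{\epsilon n}$ uniformly. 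This is a non-atomicity statement for the hitting distribution, and it is where hypothesis (H$_4$) and non-elementarity enter. Third, on the complementary set $\beta\ge(\mathfrak a-2\epsilon)n$ with weight close to $1$ by the drift and dominated convergence; the bad set contributes at most $\lambda^{-\alpha n}\cdot(\text{small})$. Choosing $\alpha$ small then makes the product of $\lambda^{-\alpha n}$ and the exponentially small mass negligible, which is exactly where the exponent $\alpha\in(0,1]$ is needed. Since $\gamma$ takes finitely many values, the bounded distortion ($\lambda^{-\alpha|\gamma|}$ per step) controls the error terms, so that for fixed large $n_0$ the sum is below $\tfrac12$ uniformly.

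Finally I verify (cp). Given $(\xi,x)$ and $(\eta,y)$, topological mixing of $\Sigma$ supplies preimages $\xi^\ast,\eta^\ast$ in a common cylinder after $m_0$ steps, since any admissible word of length $m_0$ can be appended in front. Topological transitivity of $S$ together with non-elementarity (minimality of the action of $G$ on $\partial G$, north--south dynamics of loxodromics) lets the prefix words be chosen so that $\gamma_{m_0}(\xi^\ast)x$ and $\gamma_{m_0}(\eta^\ast)y$ are both mapped into a small neighbourhood of a fixed attracting point. Compactness of $\partial G$ and continuity of the action then make $m_0$ uniform.
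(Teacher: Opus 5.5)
\textbf{Overall.} Your reduction of (lebca) to a uniform estimate for the reversed walk $Z_n=\gamma_n(\xi^\ast)^{-1}$ is sound. It is cleaner to justify it by $(gx_1\cdot gx_2)_{\id}=(x_1\cdot x_2)_{g^{-1}}$, the change-of-basepoint formula, and $\tfrac12(\beta_1+\beta_2)\ge\min_i\beta_i$, rather than by ``otherwise trivially bounded''. Your (cp) argument follows the paper's mechanism, provided the connecting words are chosen so that the images of $x$ and $y$ avoid the repelling point of the loxodromic; that is the role of the paper's choice among three boundary points. There is, however, a gap in Step 3 and a mismatched input in Step 2.

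\textbf{The gap in Step 3.} The complement of your good set is not only the shadow event $\{(Z_n\cdot x)\ge\epsilon n\}$. It also contains the low-drift event $\{|Z_n|<(\mathfrak a-\epsilon)n\}$, whose weight you only know to be $o(1)$: almost sure drift plus dominated convergence gives no rate, and no large deviation bound for $|Z_n|$ is available from the results at hand. Off the shadow you only have $\beta(Z_n,x)\ge|Z_n|-2\epsilon n\ge-2\epsilon n$. So this event can contribute $o(1)\cdot\lambda^{-2\alpha\epsilon n}$, which is not controlled for fixed $\alpha$ as $n\to\infty$; your displayed convergence to $0$ is therefore not established by this argument. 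If you instead fix $n_0$ and let $\alpha\to0$, the good-set term $\lambda^{\alpha(\mathfrak a-3\epsilon)n_0}$ tends to $1$, so ``$\alpha$ small'' does not give $\tfrac12$ either.

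The repair is to tie $\alpha$ to $n_0$. Fix $\epsilon$. Then choose $c$ so large that $\lambda^{c(\mathfrak a-3\epsilon)}$ beats the distortion constant. Next, using $\beta(Z_n,x)\ge-Kn$ with $K:=\max_{a\in\cA}d^\ast(\id,\gamma(a))$, take $n_0$ so large that the bad events have weight below a small multiple of $\lambda^{cK}$, uniformly in $\xi$ and $x$. Finally set $\alpha:=c/n_0$. The base term is then $\rho^{c}<1$, and you need only uniform convergence in probability, not exponential bounds.

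\textbf{The input in Step 2.} The $\lambda_4^t$ bound in the proof of Proposition \ref{prop:convergence-to-the-boundary} is a statement under $m_x$. It says the forward walk, conditioned to converge to $x$, rarely strays far from the ray to $x$. What you need is different. Write $P_\xi$ for the weights $e^{\varphi_n(\xi^\ast)}$ on $\theta^{-n}(\{\xi\})$. You need $\sup_{\xi,x}P_\xi\bigl((Z_n\cdot x)\ge\epsilon n\bigr)\to0$ for the reversed, unconditioned walk: it must rarely enter a deep shadow in an arbitrary prescribed direction. This is exactly the uniformity in $x$, and it has to be proved separately. One route is non-atomicity of the exit law of the reversed walk (which itself needs an argument from (H$_3$), (H$_4$)), combined with compactness of $\partial G$, sublinear tracking, and Gibbs distortion for uniformity in $\xi$. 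Another is Ancona's inequality together with exponential decay in $|g|$ of the Green function of the reversed system, which comes from $R>1$.

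\textbf{Comparison with the paper.} The paper needs no tail information. It shows $\inf_{(\xi,x)}\cL_T^{n_0}(S_{n_0}b^\ast)>0$. It then applies a second-order bound to $F(t)=\cL_T^{n_0}(e^{-tS_{n_0}b^\ast})$, using only $F(0)=1$, $F'(0)<0$ and $F''(t)\le D^2e^{tD}$, and iterates to absorb the distortion constant; $\alpha$ comes from $t_0$. Your good/bad decomposition, repaired as above, is a legitimate alternative. Its advantage is that it isolates the uniformity in $x$ as an explicit shadow estimate for the reversed walk, but it needs that estimate as a separate input.
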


\begin{proof}
 In order to prove the theorem, one has to check that $T$ has communicating preimages (cp) and that $T$ is locally eventually backward contracting in average (lebca).

 \medskip

 \noindent\textsc{Step 1: (cp)} In order to verify this hypothesis, fix three distinct points $z_1,z_2,z_3 \in \partial G$, three geodesic half rays $g_c$ from $\id$ to $z_c$ ($c=1,2,3$) and a cylinder $[b] \subset \Sigma$. It follows from transitivity of $S$ that there exist three sequences $(\eta^{(c)}_n)$ in $\Sigma$ such that $\lim_n \gamma_n((\eta^{(c)}_n)) = z_c$ and $\theta^n{\eta^{(c)}_n} \in [b]$ for all $n \in \N$ and $c =1,2,3$. As the $z_c$ are distinct,  $\delta:= \min_{c \neq c^\ast} d^\ast_{\partial G}(z_c , z_{c^\ast})> 0$.

 So assume that $(\xi_i,x_i) \in \Sigma \times X$ for $i = 1,2$.  As $\theta$ is topologically mixing, we may assume without loss of generality that $\xi_1,\xi_2$ are in the same cylinder. Now assume that $c(i)$ is chosen such that $d^\ast_{\partial G}(x_i, z_{c(i)}) = \min_j d^\ast_{\partial G}(x_i, z_j)$. Then, for any $c  \notin\{c(1),c(2)\}$,
 $d^\ast_{\partial G}(x_i, z_{c}) \geq \delta/2$ for $i=1,2$.
 Furthermore, for $\xi^{(i)}_n$ referring to the  preimage of $\theta^n$ closest to $\eta^{(c)}_n$, it follows that $\gamma_n(\eta^{(c)}_n) = \gamma_n(\xi^{(i)}_n)$ for $i=1,2$.

It now remains to apply \eqref{eq:expansion_for_Gromov_boundary} and  \eqref{eq:expansion in the second coordinate} as follows. It follows from the choice of $z_c$ that $(z_c \cdot x_i)_{\id}$ is uniformly bounded from above, say $(z_c \cdot x_i)_{\id} \leq M$. Furthermore, by construction,
$(\gamma_n(\eta^{(c)}_n))$ stays within a bounded distance to the half ray $g_c$. As  $(z_c \cdot x_i)_{\id} \leq M$, this implies that
$(\gamma_n(\eta^{(c)}_n))$ also stays within a bounded distance to any geodesic from $x_i$ to $z_c$. On the other hand, by  \eqref{eq:expansion_for_Gromov_boundary},
\begin{align*}
(z_c \cdot x_i)_{\id}  -  (z_c \cdot x_i)_{\gamma_n(\eta^{(c)}_n)} =    \frac12 (  \beta^\ast_{\id}(\gamma_n(\eta^{(c)}_n),z_c) +  \beta^\ast_{\id}(\gamma_n(\xi^{(i)}_n),x_i)).
\end{align*}
As the left-hand side is uniformly bounded, we obtain that
\[\beta^\ast_{\id}(\gamma_n(\xi^{(i)}_n),x_i) = - \beta^\ast_{\id}(\gamma_n(\eta^{(c)}_n),z_c) + O(1) \xrightarrow{n\to \infty} -\infty.  \]
It now follows from \eqref{eq:expansion in the second coordinate} that
\[\sup_{x_1,x_2 \in \partial G} d^\ast_{\partial G}(\gamma_n(\xi^{(1)}_n)^{-1}( x_1) ,
 \gamma_n(\xi^{(2)}_n)^{-1} (x_2))
  \xrightarrow{n\to \infty} 0.
\]
As $\sup d_{\Sigma}(\xi^{(1)}_n, \xi^{(1)}_n)$ also tends to zero, it follows from uniformity in the estimates that (cp) holds with respect to any constant $a> 0$ (see Definition \ref{def:LEEA}).

\medskip

\noindent\textsc{Step 2: (lebca)}
With $\pi$ as in Proposition \ref{prop:convergence-to-the-boundary} and $\Delta_{\pi(\xi)}$ referring to the Dirac measure at $\pi(\xi)$, define $dm:= d\Delta_{\pi(\xi)}d\nu(\xi)$. As
\[
\gamma(\xi)^{-1}\pi(\xi) = \gamma(\xi)^{-1}  \lim_{n \to \infty} \gamma_n (\xi) = \lim_{n \to \infty} \gamma_n (\theta\xi ),\]
it follows that $m$ shares the properties of $\nu$, that is,  $m$ is $T$-invariant, exact and $\cL_T^\ast(m)=m$. Hence, by Lin's criterion (\cite{Lin--Mixing-For-Markov-Operators--ZFWUVG1971}), we have that $\lim_n \|\cL_T^n(b^\ast) - \int b^\ast dm\|_1 = 0$, for $b^\ast(\xi,x):= \beta^\ast_{\id}(\gamma(\xi),x)$.

On the other hand, it follows from Corollary \ref{cor:pos-lyapunov-exponent} that $\int b^\ast dm> 0$. Furthermore, as $\pi(U) = \partial G$ for any open set $U$, this implies that $\lim_{n \to \infty} \cL_T^n(b^\ast) = m(b^\ast)$ on a dense set. Hence, for $k$ to be specified later, and $S_k b^\ast := \sum_{i=0}^{k-1} b^\ast\circ T^i$,
\[\lim_{n \to \infty} \cL_T^n\left( \textstyle S_k b^\ast \right) = \sum_{i=0}^{k-1} \lim_{n \to \infty} \cL_T^n\left( b^\ast \right)  = km(b^\ast)  \]
on a dense set. However, even though $b^\ast$ is not necessarily continuous, it is known that there exists a uniform constant $C$, only depending on the hyperbolicity of $G$ such that, for any sequence $(x_n) \to x$ and $\gamma \in G$,  $\limsup |\beta^\ast_{\id}(\gamma,x_n) - \beta^\ast_{\id}(\gamma,x)| < C$ (see, e.g., Lemma 3.4.10 in \cite{Das-Simmons-Urbanski--Geometry-And-Dynamics-In--MSM2017}).
Hence, for any pair $(\gamma,x)$, there is $\epsilon > 0$ such that $|\beta^\ast_{\id}(\gamma,y) - \beta^\ast_{\id}(\gamma,x)|< 2C$ for all $d^\ast(x,y) < \epsilon$. For $n > k$ fixed, this now implies that for each pair $(\xi,x) \in \Sigma \times X$, there exists $\epsilon > 0 $  such that $d(\xi,\eta)< \epsilon$ and $d^\ast(x,y) < \epsilon$ imply that
 \begin{equation} \label{eq:uniform-distortion-for-busemann-in-average}
 \left|  \cL_T^n\left( S_k b^\ast \right)(\xi,x) -  \cL_T^n\left(S_k b^\ast \right)(\eta,y)  \right|  < 3C.
 \end{equation}
 By compactness, it now follows that there exists $\epsilon > 0$, independent of $(\xi,x)$ such that $d(\xi,\eta)< \epsilon$ and $d^\ast(x,y) < \epsilon$ imply that the left hand side of \eqref{eq:uniform-distortion-for-busemann-in-average} is smaller than $6C$.
This now allows us to choose $k$  such that
$\inf_{(\xi,x)} \cL_T^n(S_k b^\ast)((\xi,x)) > 0$ for all $n$ sufficiently large. Hence, there exists $n_0 \in k \N$ with
\begin{equation} \label{eq:lower-bound-for-busemann-in-average}
0 <   \inf_{(\xi,x)  \in \Sigma \times \partial G}   \sum_{\ell=0}^{n_0/k-1} \cL_T^{k \ell}(S_k b^\ast)(\xi,x) = \inf_{(\xi,x)  \in \Sigma \times \partial G}  \cL_T^{n_0}(S_{n_0} b^\ast) (\xi,x).
\end{equation}
Using $\cL_T^{n_0}(\mathbf{1}) = \mathbf{1}$ and $D := \|S_{n_0} b^\ast\|_\infty$, it follows for $t \geq  0$ that
\begin{align}\label{eq:lower-bound-for-square-of-busemann-in-average}
 \cL_T^{n_0} ( (S_{n_0} b^\ast)^2 e^{-t S_{n_0} b^\ast})
& \leq D^2 e^{tD}.
\end{align}
Now consider the analytic function
$ F(t) := \cL_T^{n_0}\left( \textstyle e^{-tS_{n_0}(b^\ast)}\right)$.
Then \eqref{eq:lower-bound-for-busemann-in-average} and \eqref{eq:lower-bound-for-square-of-busemann-in-average} imply that $F'(0) < 0$ and $F''(t) \leq D^2e^{tD}$. Therefore, there is $t_0>0$ such that
\[s(t) :=  \sup \{F(t)(\xi,x) :(\xi,x) \in \Sigma \times \partial G \} < 1\]
for all $0 < t \leq t_0$.
It then follows that $\cL_T^{m n_0}\left( \textstyle e^{-t S_{m n_0}(b^\ast)}\right)< s(t)^m$. It now remains to make the following choices for $t$ and $m$, as well as for the parameter $\lambda^\ast$ in the definition of the visual metric on $\partial G$ (see \eqref{eq:parameter_for_metric_on_Gromov_boundary}).

If $t_0 \geq - \log \lambda^\ast $, set $t:= - \log \lambda^\ast$. On the other hand, if $t_0 <  - \log \lambda^\ast $, set $t=t_0$ and  increase $\lambda^\ast$ to
$\lambda^\ast := \exp(-  t_0)$, which corresponds to
a change of metric from $d^\ast$ to $ (d^\ast)^\alpha$, for $\alpha:= - t_0 / \log \lambda^\ast $.

With respect to these choices, it now remains to choose $m$ so large that $s^m$ absorbs the multiplicative constant in
\eqref{eq:expansion in the second coordinate}
and $\exp (t6C)$ with $C$ as in \eqref{eq:uniform-distortion-for-busemann-in-average}. With respect to these parameters, it now remains to repeat the above argument based on the uniformity of $$\limsup |\beta^\ast_{\id}(\gamma,x_n) - \beta^\ast_{\id}(\gamma,x)| < C$$ and compactness in order to obtain  that $T$ is locally contracting in average with respect to $(d^\ast_{\partial G})^\alpha$.
\end{proof}

We are now in position to give a rough characterization of the measure $\mu$ given by Theorem \ref{theo:Ruelle}.
As the topology of $\partial G$ is invariant under quasi-isometry, we do not specify the metric in the following result.

\begin{proposition} \label{prop:dense-support}
Assume that (H$_1$ - H$_4$)  hold, and that $(\Sigma,\theta)$ is topologically mixing. Then $\dd \mu = \dd \Delta_{\pi(\xi)} \dd \nu(\xi)$, and $\mu(U)>0$ for any open set $U \subset \Sigma \times X$.
\end{proposition}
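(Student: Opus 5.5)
The plan is to identify the measure $m$ with $\dd m = \dd\Delta_{\pi(\xi)}\dd\nu(\xi)$, which was used in Step 2 of the proof of Theorem \ref{theo:hyperbolic-groups-and-lebca}, with the measure $\mu$ of Theorem \ref{theo:Ruelle}, using the uniqueness part of that theorem. Then we deduce full support from the density of $\pi$ on cylinders together with full support of $\nu$.

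\textbf{Step 1: $m$ is a fixed point of $\cL_T^\ast$.} Since $\varphi$ is normalized and $\nu$ is its equilibrium state, we have $\cL^\ast(\nu)=\nu$ on $\Sigma$. For a continuous $f$ on $\Sigma\times\partial G$,
\[
\int \cL_T(f)\dd m=\int \sum_{\theta\eta=\xi} e^{\varphi(\eta)} f\bigl(\eta,\gamma(\eta)\pi(\xi)\bigr)\dd\nu(\xi).
\]
By the equivariance $\gamma(\eta)^{-1}\pi(\eta)=\pi(\theta\eta)$, valid $\nu$-a.e. and noted in Step 2 of the previous proof, we get $\gamma(\eta)\pi(\theta\eta)=\pi(\eta)$. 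The integrand is therefore $\cL\bigl(f(\cdot,\pi(\cdot))\bigr)(\xi)$, up to a null set. This needs care, since the equivariance holds only a.e. in $\eta$; the exceptional set is $\nu$-null and invariant under preimages because $\nu$ is conformal. We conclude $\int\cL_T f\dd m=\int f(\xi,\pi(\xi))\dd\nu=\int f\dd m$, so $\cL_T^\ast m=m$.

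\textbf{Step 2: uniqueness.} By Theorem \ref{theo:hyperbolic-groups-and-lebca}, $T$ is backward contracting in average with respect to $(d^\ast_{\partial G})^\alpha$. Theorem \ref{theo:Wasserstein} then gives a unique probability measure fixed by $(\cL_T^\ast)^{\ell_0}$, which is the $\mu$ of Theorem \ref{theo:Ruelle}. Since $m$ is fixed by $\cL_T^\ast$, it is fixed by $(\cL_T^\ast)^{\ell_0}$, and hence $\mu=m$. Note that $\pi$ is only measurable, but this causes no problem: $m$ is still a Borel probability measure and the contraction argument applies to any probability measures.

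\textbf{Step 3: full support.} This is the main obstacle. Take an open set $U\supset[a]\times V$ with $[a]$ a cylinder of length $n$ and $V\subset\partial G$ open, and set $g=\gamma_n$ on $[a]$. We need $\nu(\{\xi\in[a]:\pi(\xi)\in V\})>0$. By the equivariance, for $\xi\in[a]$ we have $\pi(\xi)=g\,\pi(\theta^n\xi)$, so this is the same as $\nu(\theta^n[a]\cap\pi^{-1}(g^{-1}V))>0$.

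The plan is therefore to show that every set $[b]\cap\pi^{-1}(W)$, with $[b]$ a cylinder and $W$ open, has positive $\nu$-measure. First use transitivity of $S$ (H$_4$) to find a word $w$, admissible after $b$, such that $\gamma_{|w|}$ of the concatenation sends a small shadow into $W$: pick $h\in G$ far out near a point of $W$, and use transitivity to produce a path from $(\,[b],\id)$ to $h$.

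Next, show that conditioned on the cylinder $[bw]$, with positive $\nu$-probability the remaining walk does not travel far back. The tool is the $m_x$ estimates of Proposition \ref{prop:convergence-to-the-boundary} combined with Ancona's inequality: the event that the walk started at $h$ never returns near $\id$ has probability bounded below uniformly. Alternatively, the shadow $\{x:(x\cdot h)_{\id}>d(\id,h)-K\}$ contains $\pi$ of a positive-measure set, via the non-amenability estimate $R>1$ and transience. On that event the limit point lies in the shadow of $h$, which is contained in $W$. Finally, Gibbs bounds for $\nu$ on cylinders give positivity.
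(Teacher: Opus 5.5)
Your Steps 1 and 2 agree with the paper. You verify $\cL_T^\ast m=m$ from the equivariance $\pi(\eta)=\gamma(\eta)\pi(\theta\eta)$, then invoke uniqueness from Theorem \ref{theo:Wasserstein}, which applies by Theorem \ref{theo:hyperbolic-groups-and-lebca}.

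The gap is in Step 3, at the step that carries the content. After the prefix $bw$ with $\gamma_{|bw|}\equiv h$, you have $\pi(\xi)=h\,\pi(\zeta)$ with $\zeta=\theta^{|bw|}\xi$. Since $(hy\cdot h)_{\id}=d_w(\id,h)-(y\cdot h^{-1})_{\id}$ up to $O(\delta)$, the limit point lies in the shadow $\{x:(x\cdot h)_{\id}>d_w(\id,h)-K\}$ if and only if $(\pi(\zeta)\cdot h^{-1})_{\id}<K+O(\delta)$. So what you need is a non-concentration property of the harmonic measures $\pi_\ast(\nu|_{[c]})$: they must not put all their mass on the $K$-shadow in the direction of $h^{-1}$. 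Because $h$ is chosen only after $K$ (to fit the shadow into $W$), this must hold uniformly over directions.

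The event you propose does not give this. The walk from $h$ may retreat along $[h,\id]$ by $d_w(\id,h)/2$ and then branch off. It never approaches $\id$, yet its limit point lies far outside the shadow of $h$. After translating by $h^{-1}$, you only control visits of $(\gamma_k(\zeta))$ to a neighbourhood of $h^{-1}$, not the Gromov product of $\pi(\zeta)$ with $h^{-1}$. So the fact that this event has large probability is irrelevant. Your ``alternative'' via $R>1$ and transience merely restates the claim: transience alone does not exclude that $\pi_\ast(\nu|_{[c]})$ sits near a single point.

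The missing fact can be supplied as follows. Suppose $\pi\equiv p$ $\nu$-a.e.\ on some $[c]$. Take any admissible word $u$ starting with $c$ such that $uc$ is admissible. Then for $\nu$-a.e.\ $\xi\in[uc]$ both $\pi(\xi)=p$ and $\pi(\xi)=\gamma_{|u|}(\xi)\,\pi(\theta^{|u|}\xi)=\gamma_{|u|}(\xi)\,p$ hold, using invariance of $\nu$ for the second. Moreover $\nu([uc])>0$. By transitivity of $S$ (applied to $[c]\times\{\id\}$ and $[c]\times\{g\}$), every $g\in G$ occurs as such a $\gamma_{|u|}$. Then $G$ would fix $p$, contradicting non-elementarity.

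Hence each $\pi_\ast(\nu|_{[c]})$ has two distinct support points $p_1^c,p_2^c$. For every $y\in\partial G$ we have $\min_i (p_i^c\cdot y)_{\id}\le (p_1^c\cdot p_2^c)_{\id}+O(\delta)$. Choosing $K$ larger than $\max_c (p_1^c\cdot p_2^c)_{\id}$ plus a constant then yields a fixed neighbourhood of one of the $p_i^c$ that avoids the $K$-shadow of $y$. This gives a uniform positive lower bound.

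The paper avoids this estimate entirely by running the argument in the opposite direction. Some basic set $[w_2]\times U_2$, with $U_2$ a shadow, has positive $\mu$-measure simply because $\mu$ is a probability measure. For an arbitrary $[w_1]\times U_1$, the paper uses Gouëzel's lemma: there is a group element $b$ with $d_w(\id,b)\le C$ and $d_w(\id,g_1bg_2)\ge d_w(\id,g_1)+d_w(\id,g_2)$. Combined with a tree approximation, this puts the translate of $U_2$ by $g_1b$ inside the shadow $U_1$. Transitivity then realizes $g_1b$ by a connecting word $v$, so that $T^{|w_1v|}$ maps $[w_1vw_2]\times U_1$ onto a set containing $[w_2]\times U_2$. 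From $\cL_T^\ast\mu=\mu$ it follows that $\mu([w_1]\times U_1)\ge e^{|w_1v|\min\varphi}\mu([w_2]\times U_2)>0$. That route needs no information about how $\pi_\ast\nu$ is distributed.
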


\begin{proof} Set $ \dd m:= \dd \Delta_{\pi(\xi)} \dd \nu(\xi)$. As can easily be verified, $\cL_T^\ast(m) = m$. By uniqueness of the $\cL_T^\ast$-invariant measure, $m = \mu$. So it remains to prove the second assertion. Assume that $U_1,U_2$ are sets of the form
\[
U_i = \{ x : \beta_{\id}(g_i ,x) <  - t_i \} \subset \partial G,
\]
for $g_1,g_2 \in G$ and  $t_1,t_2 > 0$ with $t_2 > d_w(\id,g_2)/2$. Now recall that, by Lemma 2.4 in \cite{Gouezel--Local-Limit-Theorem-For--JAMS2014}, there exists $C > 0$ such that, for any $g_1, g_2 \in G$, there exists $b \in G$ with $d_w(\id,b) \leq C$ and $ d_w(\id, g_1bg_2) \geq  d_w(\id, g_1) + d_w(\id, g_2)$.

\begin{figure}[ht]
\centering\def\svgwidth{0.6\textwidth}
\begingroup%
  \makeatletter%
  \providecommand\color[2][]{%
    \errmessage{(Inkscape) Color is used for the text in Inkscape, but the package 'color.sty' is not loaded}%
    \renewcommand\color[2][]{}%
  }%
  \providecommand\transparent[1]{%
    \errmessage{(Inkscape) Transparency is used (non-zero) for the text in Inkscape, but the package 'transparent.sty' is not loaded}%
    \renewcommand\transparent[1]{}%
  }%
  \providecommand\rotatebox[2]{#2}%
  \newcommand*\fsize{\dimexpr\f@size pt\relax}%
  \newcommand*\lineheight[1]{\fontsize{\fsize}{#1\fsize}\selectfont}%
  \ifx\svgwidth\undefined%
    \setlength{\unitlength}{502.1692279bp}%
    \ifx\svgscale\undefined%
      \relax%
    \else%
      \setlength{\unitlength}{\unitlength * \real{\svgscale}}%
    \fi%
  \else%
    \setlength{\unitlength}{\svgwidth}%
  \fi%
  \global\let\svgwidth\undefined%
  \global\let\svgscale\undefined%
  \makeatother%
  \begin{picture}(1,0.2440915)%
    \lineheight{1}%
    \setlength\tabcolsep{0pt}%
    \put(0,0){\includegraphics[width=\unitlength,page=1]{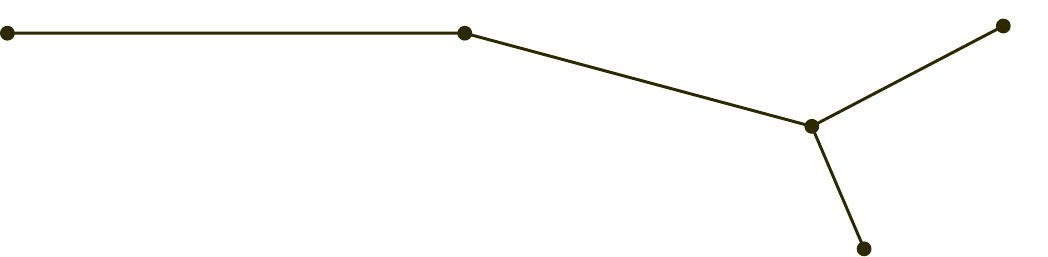}}%
    \put(0.8573589,0.00648475){\color[rgb]{0.17254902,0.16078431,0}\makebox(0,0)[lt]{\lineheight{0.72000158}\smash{\begin{tabular}[t]{l}$g_1 b g_2$\end{tabular}}}}%
    \put(0.42457283,0.22935084){\color[rgb]{0,0,0}\makebox(0,0)[lt]{\lineheight{0.72000158}\smash{\begin{tabular}[t]{l}$g_1b $\end{tabular}}}}%
    \put(0.00340013,0.22935084){\color[rgb]{0,0,0}\makebox(0,0)[lt]{\lineheight{0.72000158}\smash{\begin{tabular}[t]{l}$\id$\end{tabular}}}}%
    \put(0.9682141,0.22935084){\color[rgb]{0,0,0}\makebox(0,0)[lt]{\lineheight{0.72000158}\smash{\begin{tabular}[t]{l}$z$\end{tabular}}}}%
  \end{picture}%
\endgroup%

\caption{Tree approximation}
\label{fig:tree-approx}
\end{figure}

Now assume that $x \in \partial G$ satisfies $\beta_{g_1 b }(g_2 ,x) <  - t_2$, and that $z \in G$ is close to $x$ such that $\beta_{g_1 b }(g_2 ,x) = d_w(g_1 b g_2,z) -  d_w(g_1 b  ,z) + O(1)$. It then follows by a tree approximation
(cf. \cite[Th. 12]{Ghys-DelaHarpe--Le-Bord-Dun-Espace--1990_2} or \cite[Th. 2.1]{Gouezel--Local-Limit-Theorem-For--JAMS2014})  of the four points $\id$, $g_1 b$, $g_1 b g_2$, and $z$ (cf. Figure \ref{fig:tree-approx}) that
\begin{align*}
d_w(g_1,z) -  d_w(\id,z) = d_w(g_1,z) - d_w(\id,g_1 b) - d_w(g_1 b ,z) + O(1)
= - d_w(\id,g_1) + O(1) .
\end{align*}
By letting $z$ tend to $x$, it follows that there exists a universal constant $C$ such that $\beta_{\id}(g_1,x) > - d_w(\id,g_1) +C$. Hence, for $t_1 < d_w(\id,g_1)  - C$, it follows that $(g_1b)(U_1) \supset U_2$.

Now suppose that $w_1,w_2$ are finite words. It now follows from topological transitivity of $T$ that there is a further finite word $v$ such that $w_1vw_2$ is admissible and  $\gamma_{|w_1v|}(\xi) = g_1b$, for any $\xi \in [w_1vw_2]$. In particular,
$T^{|w_1v|} ([w_1] \times U_1) \supset [w_2]\times U_2$.
So it remains to observe that sets of the $[w_i] \times U_i$ form a bases of the topology and to choose $[w_2]\times U_2$ with $\mu([w_2]\times U_2) > 0$. Then
$\mu\left([w_1]\times U_1\right) > \exp(|w_1v| \min \varphi)  \mu\left([w_2]\times U_2\right) > 0$, provided that $d_w(\id,g_1) > C$. This proves the second assertion as $[w_1] \times U_1$ can be chosen arbitrarily.
\end{proof}

\subsection{Fluctuations of the drift with respect to the Green metric}
In order to obtain a precise description of the fluctuations of the Busemann function by applying Theorem \ref{theo:vasip}, one needs to verify that $x \mapsto \beta_{\id}(g,x)$ is Hölder continuous. As this does not necessarily hold with respect to the word metric $d_\mathrm{w}$, we now construct the following dynamically defined metric on $G$. Fix $\xi \in \Sigma$ and, for $D \geq  0$ to be specified below and $g,h \in G$, define
\[ d_\mathrm{pre}(g,h) := -\log \left( \G_1(\X_g)(\xi,h) \G_1(\X_h)(\xi,g)  \right) + D. \]
As there is $C\geq 1$ such that $\G_1(\X_g)(\xi,j) \G_1(\X_j)(\xi,h) \leq C \G_1(\X_g)(\xi,h)$, it follows that  $d_\mathrm{pre}$ satisfies the triangle inequality for any $D$ with $D \geq 2 \log C$.  Furthermore, it follows from the definition of $\G$ and the fact that $\gamma(\Sigma)$ is finite that there is $a > 0$ such that
$- \log \G_1(\X_g)(\xi,h) <  a d_\mathrm{w}(g,h)$.  So it remains to choose $D \geq 2 \log C$ so large that $ d_\mathrm{pre}(g,h) > 0$ for all $g,h \in G$. It is now straightforward to check that
\[
d_\G(g,h) :=  \min \left\{ 2 a d_\mathrm{w}(g,h), d_\mathrm{pre}(g,h) \right\}
\]
is a metric, and that this metric coincides with $d_\mathrm{pre}(g,h)$, provided that $a d_\mathrm{w}(g,h) > D$. Due to the obvious analogy to the {Green metric} for random walks on groups (\cite{Blachere-Brofferio--Internal-Diffusion-Limited-Aggregation--PTR2007,Blachere-Haissinsky-Mathieu--Asymptotic-Entropy-And-Green--AP2008,Bjorklund--Central-Limit-Theorems-For--JTP-2010}), we also refer to $d_\G$ as the \emph{Green metric}.

We now have to assume that (H$_1$ - H$_4$) hold. By Theorem 5.2 (v) in \cite{Bispo-Stadlbauer--The-Martin-Boundary-Of--IJM2023} there is  $  b  > 0$ and $K> 0$ such that $ d_\mathrm{pre}(g,h)   <  b d_\mathrm{w}(g,h)$ provided that $d_\mathrm{w}(g,h)> K$. Hence, $d_\mathrm{w}$ and $ d_\G$  are roughly isometric. This implies that $(G,d_\G)$ is hyperbolic and that Proposition \ref{prop:convergence-to-the-boundary} and Theorem \ref{theo:positive-drift} hold with respect to the new metric. Furthermore, as
$(G,d_\G)$ is hyperbolic, we may replace
 $d_\mathrm{w}$ with $d_\G$
in the definitions of the Gromov product, the visual metric and the Busemann function. In order to distinguish these objects, we will write $(g \cdot h)_\mathbf{o}^\ast$, $d_{\partial G}^\ast$ and  $\beta^{\ast}$ for these new objects.
It then follows as above that Corollary \ref{cor:pos-lyapunov-exponent} also holds with respect to  $\beta^{\ast}$.
The main advantage, however, is the following. As the Gouëzel–Lalley inequality holds for $T$ (see \cite[Th. 4.6 (ii)]{Bispo-Stadlbauer--The-Martin-Boundary-Of--IJM2023}), there are $C > 0$ and $\lambda_\ast \in [0,1)$ such that
\begin{equation}
\label{eq:busemann is hoelder}
\left|\beta^\ast_{\id}(g, x) - \beta^\ast_{\id}(g, y) \right| \leq C \lambda_\ast^{|(x \cdot y)_{\id} - (g \cdot x)_{\id}|},
\end{equation}
for $g \in G$ and $x,y \in \partial G$ in a configuration as depicted in Figure \ref{fig:GL-inequality}.
It hence follows from rough isometry and the definition of $d^\ast_{\partial G}$ that $x \mapsto \beta^\ast_{\id}(g, x)$ is Hölder continuous.

\begin{figure}[ht]
\centering\def\svgwidth{0.7\textwidth}
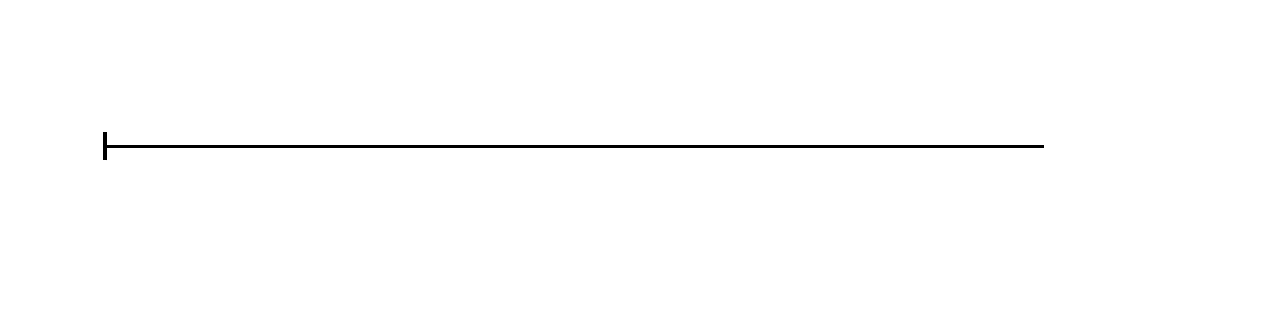
\caption{Gouëzel–Lalley inequality }
\label{fig:GL-inequality}
\end{figure}

As a consequence of Theorem \ref{theo:vasip}, Theorem \ref{theo:hyperbolic-groups-and-lebca} and the dichotomy in Proposition \ref{prop:dichotomy}, it now follows that $\beta_{\id}^\ast( \gamma_n(\xi), \pi(\xi))$ satisfies a non-degenerate almost sure invariance principle. Hence, by applying Proposition \ref{prop:convergence-to-the-boundary}, the fluctuations of $d_\G(\id, \gamma_n)$ with respect to the Green metric are now given in terms of an almost sure invariance principle.

\begin{corollary} \label{cor:fluctuation-of-drift}
  Assume that (H$_1$ - H$_4$)  hold, and that $(\Sigma,\theta)$ is topologically mixing. Then, for any $\lambda>1/4$ and after eventually extending the probability space $(\Sigma, \mu)$, there exist $\mathfrak{m} > 0$, $\sigma > 0$ and a standard Brownian motion $(B_t)$ such that, for $\mu$-a.e. $\xi$,
  \[
  d_\G(\id, \gamma_n(\xi)) = \mathfrak{m} n + \sigma B_n + o(n^\lambda).
  \]
\end{corollary}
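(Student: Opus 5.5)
We apply Theorem \ref{theo:vasip} to the observable $f(\xi,x) := \beta^\ast_{\id}(\gamma(\xi),x)$ on $\Sigma \times \partial G$, where $\partial G$ carries the metric $(d^\ast_{\partial G})^\alpha$ from Theorem \ref{theo:hyperbolic-groups-and-lebca}. Then we transfer the resulting invariance principle from the Busemann cocycle to the displacement $d_\G(\id,\gamma_n(\xi))$ along $\mu$-typical orbits, which have the form $(\xi,\pi(\xi))$ by Proposition \ref{prop:dense-support}.

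\textbf{Step 1 (hypotheses).} By Theorem \ref{theo:hyperbolic-groups-and-lebca}, $T$ is backward contracting in average for $(d^\ast_{\partial G})^\alpha$. Lipschitz continuity of the inverse branches $(\xi,x)\mapsto(\xi^\ast,\gamma(\xi^\ast)x)$ follows because each $g\in\gamma(\cA)$ (a finite set) acts on $\partial G$ bi-Lipschitz for the visual metric, by \eqref{eq:expansion_for_Gromov_boundary} and the boundedness of $\beta^\ast_{\id}(g,\cdot)$. The observable $f$ depends on $\xi$ only through the first symbol, so it is locally constant there. In $x$ it is Hölder by \eqref{eq:busemann is hoelder}, and after possibly decreasing $\alpha$ it is Lipschitz for $d_{\Sigma\times X}$. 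Theorem \ref{theo:vasip} (with $d=1$) therefore gives $\sum_{k<n} f\circ T^k = \mathfrak{m} n + \sigma B_n + o(n^\lambda)$, $\mu$-a.s., where $\mathfrak{m}=\int f\,d\mu$.

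\textbf{Step 2 (mean and non-degeneracy).} The cocycle property $S_n f(\xi,x)=\beta^\ast_{\id}(\gamma_n(\xi),x)$ holds only up to a bounded error, since $d_\G$ need not be strongly hyperbolic. This error does not accumulate, because $\beta^\ast$ is by definition a function of the endpoint $\gamma_n$. The honest identity is then $S_n f = \beta^\ast_{\id}(\gamma_n(\xi),x)+O(n\epsilon)$ unless handled carefully.

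I would therefore instead define $f$ via the coboundary-corrected limit $\lim_m [d_\G(\gamma(\xi),g_m)-d_\G(\id,g_m)]$ along a fixed sequence, so that telescoping is exact. Alternatively, one can note that the errors are bounded uniformly in $n$ on the orbit of $\pi(\xi)$, since $\gamma_n^{-1}\pi(\xi)=\pi(\theta^n\xi)$.

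Corollary \ref{cor:pos-lyapunov-exponent} (for $\beta^\ast$) gives $\mathfrak{m}>0$. For $\sigma>0$ I use Proposition \ref{prop:dichotomy} together with the Remark after it: since $\mu$ has full support (Proposition \ref{prop:dense-support}), a degenerate $\sigma$ would force $f-\mathfrak{m}=u\circ T-u$ with $u$ continuous. Then every periodic orbit would have $S_nf=n\mathfrak{m}$. Periodic points $(\xi,x)$ with $T^n(\xi,x)=(\xi,x)$ correspond to $\xi$ periodic and $x$ a fixed point of $\gamma_n(\xi)^{-1}$. By transitivity of $S$ one can take $\gamma_n(\xi)$ loxodromic and use both its attracting and its repelling fixed point. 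These give Busemann sums $\approx\pm\ell(\gamma_n(\xi))$, which cannot both equal $n\mathfrak{m}>0$. This contradiction gives $\sigma>0$, and rescaling turns $B$ into a standard Brownian motion.

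\textbf{Step 3 (from Busemann to distance).} It remains to show $|d_\G(\id,\gamma_n(\xi))-\beta^\ast_{\id}(\gamma_n(\xi),\pi(\xi))|=o(n^\lambda)$ a.s. By \eqref{def:Busemann-geral}, the difference equals $2(\gamma_n(\xi)\cdot\pi(\xi))^\ast_{\id}-2d_\G(\id,\gamma_n)+O(1)$, up to sign conventions. This is twice the distance from $\gamma_n(\xi)$ to the ray towards $\pi(\xi)$, up to $O(1)$. By Proposition \ref{prop:convergence-to-the-boundary}(3) and rough isometry, this distance is $O(\log n)=o(n^\lambda)$.

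I expect the main obstacle to be Step 2. The delicate points there are the exact cocycle/telescoping identity for a non-strongly-hyperbolic Green metric, so that $f$ is genuinely Lipschitz while $S_nf$ still tracks $\beta^\ast$ up to $O(1)$, and the periodic-orbit argument for $\sigma>0$.
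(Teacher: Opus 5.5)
Your proposal is essentially the paper's argument. Like the paper, you apply Theorem \ref{theo:vasip} to the Busemann cocycle on $\Sigma\times\partial G$ (admissible by Theorem \ref{theo:hyperbolic-groups-and-lebca}). You then get non-degeneracy from Proposition \ref{prop:dichotomy}, full support (Proposition \ref{prop:dense-support}) and two periodic points with different Busemann sums, and pass to $d_\G(\id,\gamma_n(\xi))$ via the $O(\log n)$ tracking of Proposition \ref{prop:convergence-to-the-boundary}. For non-degeneracy you use the two fixed points of a single loxodromic $\gamma_n(\xi)$, whereas the paper compares a loop with $\gamma_{n_1}(\xi_1)=\id$ against one with $\gamma_{n_2}(\xi_2)\neq\id$. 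This is only a cosmetic variant, and the paper's version also tacitly needs $\gamma_{n_2}(\xi_2)$ to have infinite order.

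In Step 2, keep only your first fix. The identity $S_nf=\beta^\ast_{\id}(\gamma_n(\xi),x)$ is exact once $\beta^\ast$ is taken to be the horofunction limit $\lim_m[d_\G(g,g_m)-d_\G(\id,g_m)]$. That limit must exist independently of the sequence $g_m\to x$, not just along a fixed sequence; this is what the Gou\"ezel--Lalley estimate \eqref{eq:busemann is hoelder} supplies. Your ``alternative'' does not work: a uniformly bounded error per step still accumulates to $O(n)$ and would destroy the $o(n^\lambda)$ remainder.
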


\begin{proof}
 It only remains to show that  $\sigma > 0$. By Proposition \ref{prop:dense-support} and the remark thereafter,  it is sufficient to construct two periodic fixed points of the same period with different values of $\beta_{\id}$. However, by topological transitivity, there exists $\xi_1,\xi_2 \in \Sigma$, $n_1,n_2 \in \N$ with
 $\theta^{n_i}(\xi_i) = \xi_i$, $\gamma_{n_1}(\xi_1) = \id$ and $\gamma_{n_2}(\xi_2) \neq \id$.
 Set $x = \lim_{k \to \infty} (\gamma_{n_2}(\xi_2))^k \in \partial G$.
 Then $T^{n_i}(\xi_i,x)=(\xi_i,x)$ for $i=1,2$.
 Moreover, $(\xi_i,x)$ are periodic points of period $k n_1 n_2$ and $S_{k n_1 n_2} \beta_{\id} (\xi_1,x) =  O(1)$ and
 $S_{kn_1n_2} \beta_{\id} (\xi_2,x) \asymp k$.
\end{proof}

\section{An ASIP for the geodesic flow on a regular cover}
\label{sec:regular cover}

The aim of this section is to discuss a further application of the above theory as well as to provide a non-trivial application of the vector-valued version of the ASIP.

In order to do so, recall that a \emph{CAT($-1$) space} $X$ is a proper geodesic space such that each geodesic triangle is thinner than a comparison triangle in the hyperbolic plane with constant curvature -1. For example, each geodesic space of pinched negative curvature has this property.
As a consequence of CAT(-1), $X$ is strongly hyperbolic and therefore comes with its own Gromov product, a visual boundary $\partial X$ and a Busemann function $\beta_{X}$. Furthermore, after fixing a point $\mathbf{o}\in X$, the space of parameterized, oriented geodesics can be identified with
\[  \mathcal{G} := \{ (\xi,\eta,s) \in \partial X \times \partial X \times \R: \xi \neq \eta \}\]
by mapping a geodesic $\gamma : \R \to X$ to the triple $( \gamma(\infty),   \gamma(-\infty),s)$, where $\gamma(\pm \infty) = \lim_{t \to \pm \infty} \gamma(t)$ and $s$ is determined by $d(\mathbf{o}, \gamma(-s)) = \min_t d(\mathbf{o}, \gamma(t))$. In this representation, the geodesic flow $(g_t)$ is defined by $g_t(\xi,\eta,s) = (\xi,\eta,s+ t)$.

Now assume that $\Gamma$ is a discrete subgroup of the isometry group $\hbox{Isom}(X)$ such that $\Gamma$ is acting freely and properly discontinuously on $X$. In particular, there is a canonical metric such that the covering map $X \to X/\Gamma$ is a local isometry. Furthermore, by extending the action of $\Gamma$ to $\partial X$,
one then obtains the geodesic flow acting on $\mathcal{G}/\Gamma$.

In order to introduce the relevant measures, recall that the abscissa of convergence of $\Gamma$ is defined by
 \[ \delta_\Gamma := \sup\left\{s \geq 0: \sum_{\gamma \in \Gamma} e^{-s d(\mathbf{o},\gamma(\mathbf{o}))} = \infty  \right\},
 \]
and that  $\Gamma$ is said to be of divergence type if $\sum_{g \in \Gamma} e^{-\delta_\Gamma d(\mathbf{o},g(\mathbf{o}))} = \infty$. Moreover, a Borel probability measure $\mu$ on $\partial X$ is referred to as an $s$-conformal measure,  for $s > 0$, if
 \begin{equation}\label{eq:conformal measure}
 \frac{\dd \mu \circ h}{\dd \mu}(\xi) =   e^{- s \beta_{X,\mathbf{o}}(h^{-1}(\mathbf{o}),\xi) }   \end{equation}
 for any Borel subset $A$ of $\partial X$ and any $h \in \Gamma$.  The limit set of $\Gamma$ is defined by $\Lambda(\Gamma) := \overline{\Gamma(\mathbf{o})} \cap \partial X$.

 The Hopf--Tsuji theorem states that $\Gamma$ is of divergence type if and only if, for any $\delta_\Gamma$-conformal measure $\mu$, the action of $\Gamma$ on $(\partial X,\mu)^2$ is ergodic.  Moreover, for groups of divergence type, the $\delta$-conformal measure is unique (for the setting of CAT($-1$) spaces and the definition of $\delta$-conformality, see \cite{Das-Simmons-Urbanski--Geometry-And-Dynamics-In--MSM2017}).
 Now let  $\mathcal{G}_\Gamma$ refer to those elements in $\mathcal{G}$ with endpoints in $\Lambda(\Gamma)$, and to $p: \mathcal{G}_\Gamma \to X$ to the canonical projection. We then say that  $\Gamma$ is \emph{convex-cocompact} if $\mathcal{G}_\Gamma/\Gamma$ is compact. In this case, it is well known that   $\delta_\Gamma$ is finite and that the group is of divergence type.
  From the viewpoint of the geodesic flow, being of divergence type is equivalent to conservativity and ergodicity of the geodesic flow with respect to the flow-invariant measure
 \begin{equation}\label{eq:flow invariant measure}
 \dd m  = e^{2 \delta_\Gamma ( \xi \cdot \eta)_{X, \mathbf{o}} } \dd \mu(\xi) \dd \mu(\eta) \dd s,
 \end{equation}
where $\mu$ refers to the  $\delta_\Gamma$-conformal measure.

We now recall the notion of a Galois (or regular) cover $\mathcal{G}/\Gamma$. That is, if $N$ is a normal subgroup of $\Gamma$, then the cover $\pi: \mathcal{G}/N \to \mathcal{G}/\Gamma$ is referred to as regular cover. Moreover, these covers are also referred to as Galois covers in order to put emphasis on the fact that each fiber of $\pi$ carries the structure of the group $G/N$. In order to state the next theorem, observe that $m$ in \eqref{eq:flow invariant measure} induces  flow invariant measures $m_\Gamma$ and $m_N$ on $\mathcal{G}/\Gamma$ and $\mathcal{G}/N$, respectively.
Note that $X/N$ comes with a natural metric
\[ d_{X/N}(x,y) := \inf\left\{ \ell(c) : c \hbox{ is a curve from } x \hbox{ to } y \hbox{ in }X/N \right\} \]
and that $G :=\Gamma/N$ can be identified with a group of deck transformations by normality. As $G$ acts cocompactly on $p(\mathcal{G}_\Gamma)/N \subset X/N$, it follows from the Švarc–Milnor theorem, that the Cayley graph of $G$ with respect to the word metric embeds quasi-isometrically into
$p(\mathcal{G}_\Gamma)/N$. Furthermore, as it easily can be seen, for any pair $x,y \in p(\mathcal{G}_\Gamma)/N$ there is $g \in G$ such that $d_{X/N}(gx,y)$ is smaller than the diameter of $p(\mathcal{G}_\Gamma)/\Gamma \subset X/\Gamma$. In particular, it follows that $(p(\mathcal{G}_\Gamma)/N, d_{X/N})$ is a Gromov hyperbolic space which is not necessarily strongly hyperbolic. On the other hand, the embedding of the Cayley graph provides us with an identification of $\partial G$ with the Gromov boundary of $p(\mathcal{G}_\Gamma)/N$.

In order to circumvent the problem that the Busemann function might not be Hölder continuous, one has to pass as in Corollary \ref{cor:fluctuation-of-drift} to the Green metric. However, as the statement of the theorem below does not require a metric, it suffices to consider
\[
d_{X/N}^\ast(x,y) := - \log \sum_{\gamma \; \mathrm{is}\; \mathrm{geod.} \; \mathrm{arc} \; x \to y } e^{- \delta_\Gamma \ell(\gamma)}
=  - \log \sum_{g \in N} e^{- \delta_\Gamma d_X(x^\ast,gy^\ast)},
\]
for $x,y \in X/N$ and some $x^\ast,y^\ast \in X$ which project under $X \to X/N$ to $x$ and $y$, respectively. In particular, $\exp( - d_{X/N}^\ast)$ corresponds to the Poincaré series of $N$  at $\delta_\Gamma$.
Moreover, even though $d_{X/N}^\ast$ is not necessarily a metric, $d_{X/N}^\ast$ coincides with the Green metric up to a uniform error.

\begin{theorem}\label{theo:geometric-application} Suppose that $X$ is a CAT($-1$) space, that $\Gamma$ is a  convex-cocompact, discrete subgroup of the group of orientation-preserving isometries of $X$.  Furthermore, assume that $N$ is a normal subgroup of $\Gamma$ such that $\Gamma/N$ is a non-elementary, word-hyperbolic group and such that the geodesic flow on $\mathcal{G}(X/N)$ is topologically transitive. Then the following holds
\begin{enumerate}
 \item There exist $c_1,c_2 > 0$ such that
 $ c_1 d_Y  - c_2 \leq  d_{X/N}^\ast \leq \delta_\Gamma d_Y$ on  $p(\mathcal{G}_\Gamma)/N$.
  \item $d_{X/N}(p(g_s (\xi)),\mathfrak{g}_\xi([0,\infty) ) )  = O(\log s)$ as $s \to \infty$,  for any geodesic half ray $\mathfrak{g}_\xi$ from  $\mathbf{o}$ to $\pi(\xi)$,
 \item After extending the probability space $(\mathcal{G}/\Gamma, (m_\Gamma(\mathcal{G}/\Gamma))^{-1} m_\Gamma )$, where $m$ is as in \eqref{eq:flow invariant measure}, there are $\mathfrak{m},\sigma > 0$ and a standard Brownian motion $(B_t)$ such that, for any $\epsilon > 0$, and $m$-a.e. $\zeta \in \mathcal{G}/N$
 \[
d_{X/N}^\ast( p(\zeta), p(g_t(\zeta)) = \mathfrak{m} t + \sigma B_t + o(t^{1/4 + \epsilon}) \hbox{ as $t \to \infty$}.
 \]
 \item If $p(\mathcal{G}_\Gamma/N)$ is strongly hyperbolic, then one may replace $d_Y^\ast$ with $d_Y$ in the above approximation by a Brownian motion.
\end{enumerate}

\end{theorem}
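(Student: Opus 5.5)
We reduce Theorem \ref{theo:geometric-application} to the symbolic results of Section \ref{sec:random-walks} and to Theorem \ref{theo:vasip-flow}. The first step is the coding. We apply the strong symbolic dynamics of Constantine, Lafont \& Thompson to the geodesic flow on $\mathcal{G}_\Gamma/\Gamma$. This represents the flow, up to a bounded-to-one factor of measure zero, as a suspension over a topologically mixing subshift of finite type. By collapsing stable directions we pass to a one-sided shift $(\Sigma,\theta)$ with a Lipschitz roof $h$, obtained by cohomology. The normalized potential is $\varphi=-\delta_\Gamma h+u-u\circ\theta$, whose equilibrium state $\nu$ is the projection of $m_\Gamma$.

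Lifting to $\mathcal{G}_\Gamma/N$ gives a skew product $S(\xi,g)=(\theta\xi,g\gamma(\xi))$, with $\gamma$ locally constant and taking values in $G=\Gamma/N$. We refine the partition if necessary to make $\gamma$ depend on one symbol. Topological transitivity of the flow on $\mathcal{G}(X/N)$ gives (H$_4$), and (H$_1$)--(H$_3$) hold by construction. Then $p(g_t\zeta)$ stays within bounded distance of $\gamma_{n(t)}(\xi)\mathbf{o}$, where $n(t)$ is the lap number of the suspension. By Švarc–Milnor, $d_{X/N}$ and the word metric are quasi-isometric on $p(\mathcal{G}_\Gamma)/N$.

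Part (1) comes directly from the definitions. The bound $d^\ast\le\delta_\Gamma d_Y$ holds because the sum contains the shortest arc, whose weight is $e^{-\delta_\Gamma d_Y}$. For the lower bound, we bound the number of arcs of length in $[L,L+1]$ by $e^{(\delta_\Gamma-\epsilon)L}$ times a constant. This uses that the critical exponent $\delta_N$ of $N$ is strictly smaller than $\delta_\Gamma$, which follows from non-amenability of $\Gamma/N$ (the Kesten/Roblin–Stadlbauer criterion, equivalently $R>1$ as used in Section \ref{sec:random-walks}). In addition, all arcs have length at least $d_Y$, so summing gives $d^\ast\ge(\delta_\Gamma-\delta_N-\epsilon)d_Y-c_2$.

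Part (2) follows from Proposition \ref{prop:convergence-to-the-boundary}(3). We transport it through the quasi-isometry and use $n(t)\asymp t$, since the roof is bounded above and below. This turns $O(\log n)$ into $O(\log t)$, with $\pi(\zeta):=\pi(\xi)$.

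For part (3), the main step is identifying $d^\ast_{X/N}(p(\zeta),p(g_t\zeta))$ with a Birkhoff sum of a Lipschitz observable on $\Sigma\times\partial G$. Note that $e^{-d^\ast}(x,y)$ is, up to bounded multiplicative constants, the Green function $\G_1(\X_h)(\xi,g)$ of the coded walk. This holds because $\varphi=-\delta_\Gamma h$ weights each symbolic path by $e^{-\delta_\Gamma\ell}$, and geodesic arcs in $X/N$ correspond to such paths. Hence $d^\ast=d_\G+O(1)$.

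Next, by Proposition \ref{prop:convergence-to-the-boundary} and hyperbolicity, $d_\G(\id,\gamma_n(\xi))=\beta^\ast_{\id}(\gamma_n(\xi),\pi(\xi))+O(\log n)$. The $O(\log n)$ error is $o(n^\lambda)$. By the cocycle identity (which holds up to a bounded error that we absorb), this Busemann term equals $S_n b^\ast(\xi,\pi(\xi))$, where $b^\ast(\xi,x)=\beta^\ast_{\id}(\gamma(\xi),x)$ is Hölder by \eqref{eq:busemann is hoelder}. Since $\mu=\int\Delta_{\pi(\xi)}\dd\nu$ by Proposition \ref{prop:dense-support}, the process is distributed as required.

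We then apply Theorem \ref{theo:vasip-flow} on $\Sigma\times\partial G$ with roof $h(\xi)$ and $\overline f=(h,b^\ast)$. Its hypotheses are available: backward contraction in average comes from Theorem \ref{theo:hyperbolic-groups-and-lebca}, and Lipschitz inverse branches hold because $\gamma$ takes finitely many values. The vector-valued version is exactly what handles the time change $n\mapsto n(t)$. The flow-time ASIP then gives $\mathfrak{m}t+\sigma B_t+o(t^{\lambda})$, with $\mathfrak{m}=\int b^\ast\dd\mu/\int h\dd\mu>0$ by Corollary \ref{cor:pos-lyapunov-exponent}. Non-degeneracy, $\sigma>0$, follows from Proposition \ref{prop:dichotomy} using two periodic orbits as in Corollary \ref{cor:fluctuation-of-drift}, applied to the combination $b^\ast-\mathfrak{m}h$.

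Part (4) is the same argument with $d_Y$ directly. In the strongly hyperbolic case the Busemann function of $d_Y$ is already Hölder and an exact cocycle, so the passage to $d^\ast$ is unnecessary.

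The main obstacle is making the comparison $e^{-d^\ast}\asymp\G_1$ rigorous. The correspondence between arcs and symbolic paths is only bounded-to-one, and roof-time intervals must be matched with arc lengths. We would try to handle this by a bounded-distortion argument on cylinders combined with the Ancona inequality.
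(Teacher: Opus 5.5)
Your architecture matches the paper's proof. You use the Constantine--Lafont--Thompson coding and the one-sided skew product $S$. Part (2) comes from Proposition \ref{prop:convergence-to-the-boundary}(3) plus the quasi-isometry. Part (3) goes through the Green metric, Theorem \ref{theo:vasip-flow} with $(h,b^\ast)$, and a periodic-orbit argument for $b^\ast-\mathfrak{m}h$.

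\textbf{Gap in part (3).} The step that turns $d_\G(\id,\gamma_n(\xi))$ into a Birkhoff sum does not work as written. You say the cocycle identity ``holds up to a bounded error that we absorb''. Decomposing $\gamma_n(\xi)=\gamma(\xi)\gamma(\theta\xi)\cdots\gamma(\theta^{n-1}\xi)$ applies a coarse identity $n-1$ times, so the errors add up to $O(n)$, not $O(1)$. An $O(n)$ error swamps $\sigma B_n$ and $o(n^\lambda)$, and even corrupts the drift $\mathfrak{m}$. You also cannot fold the defect into $b^\ast$: for a merely coarse Busemann function the defect at step $k$ depends on $\gamma_k(\xi)$ and $x$, not only on $T^k(\xi,x)$.

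What you need is that $\beta^\ast_{\id}$ is an \emph{exact} cocycle, so that $\beta^\ast_{\id}(\gamma_n(\xi),x)=S_nb^\ast(\xi,x)$ identically. For the Green metric this holds for two reasons:
\begin{enumerate}
\item $d_\G$ is left-invariant, since $\G_1(\X_g)(\xi,h)$ depends only on $g^{-1}h$.
\item The Martin-boundary convergence and the Gou\"ezel--Lalley estimate behind \eqref{eq:busemann is hoelder} give the horofunction representation $\beta^\ast_{\id}(g,x)=\lim_n\bigl(d_\G(g,g_n)-d_\G(\id,g_n)\bigr)$ for $g_n\to x$, exactly as in the strongly hyperbolic case.
\end{enumerate}
You rely on this exactness yourself for $d_Y$ in part (4). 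Alongside H\"older continuity, it is the real reason for passing to $d_\G$, and it is more delicate than the comparison $e^{-d^\ast}\asymp\G_1$ you single out as the main obstacle. The paper settles that comparison in a few lines using (S) and the fact that preimage weights $e^{\varphi_n}$ are comparable to $e^{-\delta_\Gamma d(\mathbf{o},z\mathbf{o})}$ for the corresponding $z$ in the coset.

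A minor point: $d_{\mathrm{pre}}$ is built from a product of two Green functions, so $e^{-d^\ast}\asymp\G_1$ gives $d_\G=2d^\ast_{X/N}+O(1)$, not $d^\ast_{X/N}+O(1)$. This only rescales $\mathfrak m$ and $\sigma$, and the paper's write-up has the same slip.

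\textbf{Part (1): a different route with an unproved step.} The paper gets both inequalities from $d_\G=d^\ast_{X/N}+O(1)$, the quasi-isometry of $d_\G$ and $d_{\mathrm w}$ (Theorem 5.2(v) of Bispo--Stadlbauer together with $R>1$), and \v{S}varc--Milnor. Your upper bound is fine. Your lower bound needs the number of arcs from $x$ to $y$ with length in $[L,L+1]$ to be at most $Ce^{(\delta_\Gamma-\epsilon)L}$ with $C$ \emph{independent of $x,y$}; otherwise $c_2$ depends on the endpoints.

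The inequality $\delta_N<\delta_\Gamma$ only controls a single orbit. After using normality and cocompactness to bring the endpoints near $\mathbf{o}$, you must bound the growth of every coset $N\gamma$ in balls, uniformly in $\gamma$. The obvious translation argument gives only exponent $2\delta_N$, which is useless because $\delta_N\geq\delta_\Gamma/2$ for non-trivial normal subgroups. You need an extra input, for example a symmetry/Cauchy--Schwarz argument, or the uniform exponential decay of the Green function that underlies the paper's quasi-isometry $d_\G\sim d_{\mathrm w}$.

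With that input, your direct count is a reasonable, more geometric alternative. The paper's route costs nothing extra, since it reuses the comparison $d_\G\approx d^\ast_{X/N}$ it already needs for (3).
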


\begin{proof} The Markov coding of the geodesic flow on $\mathcal{G}/\Gamma$ in \cite{Constantine-Lafont-Thompson--Strong-Symbolic-Dynamics-For--JLP-M2020} provides us with a two-sided subshift of finite type $(\Sigma_\ast,\theta)$, a map $\kappa : \Sigma_\ast \to  \Gamma$, which depends only on the zeroth coordinate, and a bounded and Hölder continuous roof function such that the flow on $\mathcal{G}/\Gamma$ is isomorphic to the suspension flow over the two-sided shift under this roof function with respect to $m$.

As shown in Step 1 in the proof of Theorem 6.2 in \cite{Bispo-Stadlbauer--The-Martin-Boundary-Of--IJM2023}, the geodesic flow on $\mathcal{G}/N$ can be identified with a suspension flow under the same roof function over the skew product $(\xi,[g]) \mapsto (\theta(\xi), [g\gamma(\xi)])$ on $\Sigma_\ast \times \Gamma/N$.
Due to the Markov property, the first return map has a maximal non-invertible factor of the form \eqref{eq:random-walk-with-stationary-increments}. Moreover, as the roof function is Hölder continuous, there is a function $h : \Sigma \to [0,\infty)$ which is cohomologous to the roof function of the two-sided subshift. Hence, in order to study the fluctuations of $d( \zeta, g_t(\zeta))$ and $d_\G( \zeta, g_t(\zeta))$ for up to a bounded error, it is sufficient to study the suspension semi-flow over $S$ with Lipschitz continuous height function $h$.

On the level of measures, note that $m$, as defined in \eqref{eq:flow invariant measure}, projects to an invariant measure on the two-sided subshift and then to a $\theta$-invariant measure $\nu$ on $\Sigma$, which is absolutely continuous with respect to the $\delta_\Gamma$-conformal measure $\mu$. As a consequence of \eqref{eq:conformal measure}, Ruelle's operator theorem and the fact that the Busemann function is Hölder continuous in CAT($-1$) spaces, it follows that $\dd \nu / \dd \mu$ is Hölder continuous and that $\varphi = \dd \nu / \dd \nu \circ \theta$ is a normalized,  Hölder continuous potential, cohomologous to   $ \delta_\Gamma \beta_\mathbf{o}(\gamma(\xi),\pi_+(\xi))$, with $\pi_+(\xi)\in \partial X$ referring to the endpoint of the geodesic half-ray coded by $\xi$.

By combining Proposition \ref{prop:convergence-to-the-boundary}, which states that  $\gamma_n(\xi)$ is at most $O(\log n)$ away from the half-ray from $\id$ to $\pi(\xi)$, with the observation that there is a uniform bound for the distance of elements in $p(\mathcal{G}_\Gamma/N)$ to the embedded Cayley-graph of $\Gamma/N$, it follows that
\[d_{X/N}(\gamma, p(g^\ast_t(\xi))) = O(\log t)\]
almost surely, where $\gamma$ is some geodesic half ray from $\mathbf{o}$ to $\pi(\xi)$ and $(g^\ast_t)$ refers to the suspension semi-flow. In particular, the second assertion holds.

After changing the metric to the Green metric, depending on whether the Busemann function is Hölder continuous, it hence remains to apply Theorem \ref{theo:vasip-flow} in analogy to  Corollary \ref{cor:fluctuation-of-drift} in order to obtain the almost sure invariance principle for the drift with respect to the Green metric on $G$, but this time with respect to the semi-flow. The nontriviality of $\sigma$ follows from an  argument similar to that in Corollary \ref{cor:fluctuation-of-drift}. This proves the third and fourth assertion.

It remains to relate the Green metric with $d^\ast_{X/N}$. In order to do so, we recall from \cite{Bispo-Stadlbauer--The-Martin-Boundary-Of--IJM2023} that the symmetry condition (S) holds, which implies that, for $g,h \in \Gamma$ and with $[g],[h]$ referring to the corresponding elements in $\Gamma/N$,
\[
d_\G ([g],[h])  = - 2 \log \G(\X_{[g]}),(\xi, [h]) + O(1).
\]
However, $\G(\X_{[g]}),(\xi, [h])$ is a sum over terms of the form $\exp(\varphi_n(\xi^ast))$, for some $n\in \N$ and $\xi^\ast$ with $T^n(\xi_\ast,[g]) = (\xi, [h])$. Moreover, as $\Gamma$ is convex-cocompact and $\varphi_n(\xi) = \delta_\Gamma \beta_\mathbf{o}(\gamma_n(\xi),\xi) + O(1)$, it follows that
\[
d_\G ([g],[h])  = - 2 \log \sum_{z \in  g^{-1}h N }  e^{- \delta_\Gamma d(\mathbf{o} , z \mathbf{o} )}  + O(1).
\]
Hence, $d_\G = d^\ast_{X/N} + O(1)$. The first assertion now easily follows from the fact that $d_\G$ is quasi-isometric to $d_w$.
\end{proof}

\subsection*{Acknowledgments}

The authors gratefully acknowledge the financial support provided by the Coordenação de Aperfeiçoamento de Pessoal de Nível Superior – Brasil (CAPES), through the Programa de Excelência Acadêmica (PROEX), Finance Code 001.

The first author acknowledges the Fundação Carlos Chagas Filho de Amparo à Pesquisa do Estado do Rio de Janeiro (FAPERJ) for the support provided through the Nota 10 program (Process E-26/201.177/2020).

The second author acknowledges the Conselho Nacional de Desenvolvimento Científico e Tecnológico (CNPq) for the Productivity in Research fellowship (Grant 314896/2023-6) and the Fundação Carlos Chagas Filho de Amparo à Pesquisa do Estado do Rio de Janeiro (FAPERJ) for the Cientista do Nosso Estado (CNE) fellowship (Process E-26/204.324/2024).

\setlength{\emergencystretch}{2em}

\printbibliography

\end{document}